%% file: main.tex
\documentclass{siamart251216}

\usepackage{amssymb,mathrsfs}
\usepackage{bm}
\usepackage[T1]{fontenc}
\usepackage{microtype}
\usepackage{enumitem}
\usepackage{bookmark}
\usepackage{graphicx}
\usepackage[caption=false]{subfig}
\usepackage{placeins}
\usepackage{booktabs}
\usepackage{tikz-cd}
\usepackage{comment}
\usepackage{ulem}

\newsiamremark{remark}{Remark}
\newsiamremark{assumption}{Assumption}

\theoremstyle{plain}
\theoremheaderfont{\normalfont\itshape}
\theorembodyfont{\normalfont}
\theoremsymbol{}
\newtheorem*{unnumberedremark}{Remark}

\newcommand{\R}{\mathbb{R}}
\newcommand{\C}{\mathbb{C}}
\newcommand{\N}{\mathbb{N}}

\newcommand{\bbU}{\mathbb{U}}
\newcommand{\by}{\mathbf{y}}
\newcommand{\bz}{\mathbf{z}}

\newcommand{\bb}{\mathbf{b}}

\newcommand{\bk}{\mathbf{k}}

\newcommand{\balpha}{\boldsymbol{\alpha}}

\newcommand{\bnu}{\boldsymbol{\nu}}

\newcommand{\Om}{\Omega}

\newcommand{\Gcal}{\mathcal{G}}
\newcommand{\Ncal}{\mathcal{N}}

\newcommand{\abs}[1]{\left|#1\right|}
\newcommand{\norm}[1]{\left\|#1\right\|}

\newcommand{\ip}[2]{\left\langle #1,#2\right\rangle}

\newcommand{\dd}{\operatorname{d}}

\newcommand{\Phib}{\Phi^{\bb}}
\newcommand{\depth}{l}
\newcommand{\NN}{N}

\definecolor{darkred}{rgb}{.7,0,0}

\title{Operator Barron Regularity for Elliptic Equations and Application to Neural Operator Approximation\thanks{Submitted to the editors DATE.
\funding{
We acknowledge the funding support from 
National Key R\&D Program of China 2025YFA1018700, 
National Natural Science Foundation of China No.62595771,
National Natural Science Foundation of China No.12571436, 
the Fundamental Research Funds for the Central Universities,
NSF of the United States DMS-2309777, DMS-2309778, DMS-2542853.
}}}

\author{
  Shuhao Cao\thanks{
    School of Science and Engineering, University of Missouri-Kansas City
    (\email{scao@umkc.edu}).
  } 
  \and
  Ruchi Guo\thanks{
    Sichuan University
    (\email{ruchiguo@scu.edu.cn}).
  }
  \and
  Daniel Zhengyu Huang\thanks{
    Beijing International Center for Mathematical Research,  Center for Machine Learning Research, Peking University, Beijing, China 
    (\email{huangdz@bicmr.pku.edu.cn}).
  }
  \and
  Na Liu\thanks{School of Mathematical, Sichuan University, Chengdu, China
    (\email{
liuna0@stu.scu.edu.cn}).
  }
}

\headers
  {Barron Regularity and Operator Learning}
  {R. Guo, S. Cao, D.Z. Huang, and N. Liu}

\usepackage{etoolbox}
\makeatletter
\patchcmd{\normalsize}
  {\@setfontsize\normalsize\@xpt\@xiipt}
  {\@setfontsize\normalsize{9}{11}}
  {}{}
\makeatother
\normalsize

\apptocmd{\normalsize}{%
  \setlength{\abovedisplayskip}{5pt plus 1pt minus 1pt}
  \setlength{\belowdisplayskip}{5pt plus 1pt minus 1pt}
  \setlength{\abovedisplayshortskip}{0pt plus 1pt}
  \setlength{\belowdisplayshortskip}{5pt plus 1pt minus 1pt}
}{}{}
\normalsize

\begin{document}
\maketitle

\input{Chapters/0_0_Abstract}

\begin{keywords}
Neural Network, Operator learning, Barron space, approximation theory, Fourier neural operator, elliptic equations
\end{keywords}

\begin{MSCcodes}
68T07, 65N15, 65D40
\end{MSCcodes}

\input{Chapters/1_Introduction}

\input{Chapters/2_Formulation}
\input{Chapters/3_harmonic}
\input{Chapters/3_Theory_coercive}
\input{Chapters/4_Error}
\input{Chapters/5_Experiment}

\input{Chapters/6_Summary}

\appendix
\input{Chapters/7_Appendix_v2}


\bibliographystyle{siamplain}

\end{document}

%% file: Chapters/0_0_Abstract.tex
\begin{abstract}
We develop a spectral Barron regularity framework for nonlinear operators between infinite-dimensional function spaces. 
Using Hilbert-valued Fourier representations of finite-dimensional input truncations, we establish bounds that are uniform in the input truncation dimension and prove arbitrary-order Barron regularity for a class of elliptic solution operators. 
Second, we use this operator Barron regularity to derive approximation error bounds for an FNO-type neural operator, with explicit dependence on the input truncation dimension, output reconstruction dimension, and network width. For a fixed input truncation dimension, optimizing the neural approximation bound over the Barron regularity order yields stretched-exponential decay of the approximation error in the network width. We also show some numerical experiments to support the predicted rate.
\end{abstract}


%% file: Chapters/1_Introduction.tex
\section{Introduction}

Partial differential equations (PDEs) with stochastic input data are a commonly used paradigm in science and engineering. Stochasticity typically reflects the uncertainty in the various parameters entering the physical phenomenon described by the equation. Neural operators provide surrogates for repeated PDE solves for different parameters without retraining, and their approximation properties are closely related to the regularity of the underlying solution maps. To present a new result regarding this relation, we consider the following abstract elliptic problem in a bounded Lipschitz domain \(\Omega\subset\mathbb R^d\):
\begin{equation}\label{eq:elliptic-pde}
\text{find }u(a)\in \mathbb V\text{ such that}\qquad
\mathfrak b(a;u(a),v)=\langle f,v\rangle_{\mathbb V',\mathbb V}
\quad\forall v\in \mathbb V.
\end{equation}
Here \(\mathbb V\) and \(\mathbb H\) be separable real Hilbert spaces, and \(\mathbb V\) is the solution space, incorporating the homogeneous essential boundary condition. The approximation error is measured in \(\mathbb H\). 
The coefficient field \(a\) is in an admissible subset of $L^\infty(\Omega)\cap \mathbb{R}^+$, \(f\in \mathbb V'\) is a fixed deterministic source term. \(\mathfrak b(a;\cdot,\cdot)\) is a continuous bilinear form on \(\mathbb V\times \mathbb V\). Assuming that \(\mathbb V\hookrightarrow \mathbb H\) is dense and compact, then there exists $C_S > 0$ depending on the spatial domain $\Omega$, 
\begin{equation}
\label{eq0_poincare}
\|v\|_{\mathbb H}\le C_S\|v\|_{\mathbb V},
\qquad v\in \mathbb V.
\end{equation}
By Lax-Milgram lemma, \eqref{eq:elliptic-pde} is well-posed, and the following are some specific examples:
\begin{itemize}
\item The diffusion equation with homogeneous Dirichlet boundary conditions:
\begin{equation}
\label{eq_case1_darcyflow}
\mathbb V=H_0^1(\Omega),\qquad \mathbb H=L^2(\Omega),\qquad
\mathfrak b(a;u,v)=\int_\Omega a(x)\nabla u(x)\cdot\nabla v(x)\,\dd x.
\end{equation}
In this case, \(C_S\) in \eqref{eq0_poincare} is the Poincar\'e constant when \(\|\cdot\|_{\mathbb V} = |\cdot|_{H^1}\).

\item The linear elasticity equation with homogeneous displacement boundary conditions:
\[
\begin{gathered}
\mathbb V=[H_0^1(\Omega)]^d,\qquad \mathbb H=[L^2(\Omega)]^d,\\[1mm]
\mathfrak b(a;u,v)=\int_\Omega a(x) \bigl( 2\mu \varepsilon(u)(x):\varepsilon(v)(x)+\lambda\operatorname{div}u(x)\operatorname{div}v(x) \bigl)\,\dd x.
\end{gathered}
\]
Here \(\mu>0\) and \(\lambda\ge0\) are fixed reference Lam\'e constants, \(\varepsilon(v)=\tfrac12(\nabla v+\nabla v^\top)\) is the linearized strain tensor, and \(:\) denotes the Frobenius inner product. In this case, \eqref{eq0_poincare} is the Korn's inequality when \(\|\cdot\|_{\mathbb V} =\|\varepsilon(\cdot)\|_{L^2(\Omega)}\). 
\end{itemize}
Here, for the purpose of a concise presentation, the coefficients in the examples above are determined by a single scalar coefficient field (see Section \ref{sec:barron-spaces}).
Nevertheless, the framework introduced in this paper can readily be extended to multiple or tensorial coefficient fields.

By the well-posedness of problem \eqref{eq:elliptic-pde}, the following coefficient-to-solution operator is $1$-$1$ and will be our main consideration throughout this paper:
\begin{equation}\label{eq:solution-operator}
\mathcal G^{\dagger}: \mathbb B^{\bb}\subset L^\infty(\Omega) \to \mathbb V,
\qquad g\mapsto u(\bar{a} + g),
\end{equation}
where $\bar a$ is a fixed background coefficient and
$\mathbb B^{\bb}$ is the prescribed family of coefficient
perturbations defined in \eqref{eq:barronfun}.
Our goal is to establish parametric spectral Barron regularity
of this operator under suitable assumptions on the coefficient
family and derive quantitative
approximation bounds in $\mathbb H$ for spectral-based neural operators.

\subsection{Literature review}
Neural operators have capacities to represent maps between function spaces.
Once trained, they can also be used as surrogates for repeated PDE solves in uncertainty quantification~\cite{guo2025warm,gao2024adaptive,cao2025derivative}, inverse problems \cite{2020GuoJiang,2023GuoCaoChenICLR,2026ZhengWeiCaoGuo,2025NelsenYang}, and engineering design~\cite{li2023fourier,wen2026geometry,huang2026cost,gomes2026combining}. 
Representative architectures include Deep Operator Networks (DeepONets)~\cite{lu2021deeponet,jin2022mionet}, whose branch and trunk networks represent input-dependent coefficients and spatial basis functions, respectively; 
kernel-based neural operators~\cite{2021NelsenStuart,li2020neural,li2021fourierneuraloperatorparametric,zeng2025point,han2026geometric}, including Fourier Neural Operators (FNOs) and their variants, which parameterize integral operators in Fourier space; and attention-based models~\cite{cao2021choose,2023LiMeidaniFarimani,wu2024transolver,2026TaiLiuLiChan}, whose continuum formulations use input-dependent integral kernels~\cite{calvello2025continuum}. 
Understanding their approximation properties requires regularity information about the underlying solution map. 
In this work, we investigate spectral Barron regularity to quantify the dependence of approximation error on input resolution, output resolution, and network width.

Barron-type spaces describe classes of functions admitting quantitative
approximation bounds by shallow neural networks.
Barron~\cite{Barron1993} established an $L^2$ approximation rate of
order $K^{-1/2}$ for width-$K$ sigmoidal networks.
Subsequent work developed integral representations~\cite{2022E}
and variation-space characterizations~\cite{2023SiegelXu} of
Barron-type spaces, and investigated functional-analytic properties
of spectral Barron spaces~\cite{choulli2025functional} and their
connections with classical smoothness spaces~\cite{meng2023new}.
Higher-order spectral Barron regularity also yields improved
approximation rates~\cite{SiegelXu2022HighOrder}.
For PDEs, spectral Barron regularity in the spatial variable
has been established for solutions of Poisson and static
Schr\"odinger equations~\cite{2021Lu,2023Chen,ming2025barron}. 
Barron approximations of variable-coefficient elliptic
PDEs have also been constructed; see \cite{2021Chen,2022EWojtowytsch} for example.
Barron norm can be also used in inverse problems as regularization \cite{lu2026interpolation}.
These mentioned existing works mainly focus on the spatial Barron regularity.

At the operator level, universal approximation results can
be obtained by combining finite-dimensional neural network
approximation with input encoding and output decoding~\cite{chen1995universal,2021Bhattacharya}.
For quantitative approximation bounds, one approach exploits PDE-specific structure through numerical solver emulation to construct neural operator approximations, 
which yields FNO error and complexity bounds for Darcy flow and Navier-Stokes equations~\cite{2021Kovachki}.
The solver emulation technique was also used in \cite{2023MarcatiSchwab} to achieve exponential convergence of DeepONet for elliptic equations.
A nonlinear reconstruction approach can also yield exponential-type convergence of shift-DeepONets and FNOs for a prescribed family of Burgers solutions with shocks~\cite{2023LanthalerMolinaroHadornMishra}.
Another important class of approaches relies on holomorphy-based analysis, originally established in \cite{2015CohenDeVore,Cohen2010Convergence,CohenDeVoreSchwab2011} by Cohen et al., for classical surrogate models;
see the more detailed discussion below.
We highlight that exploiting holomorphy of an extension to the parameter-to-solution map is also a key theoretical tool in the present work.

Two fundamental concepts heavily affect the analysis and the convergence rates in these works: the regularity of input functions and the regularity of the operator itself.
The aforementioned works employ different kinds of regularity of input functions in their setting. 
Roughly speaking, they can be categorized into two groups: the first one is based on classical continuity/differentiability including
Lipschitz continuity \cite{chen1995universal,weihs2026deep},
 $C^r$ spaces \cite{2026LanthalerStuart}, 
 analytic functions \cite{2023MarcatiSchwab}, 
 and Hilbert spaces \cite{2021Bhattacharya,2021Kovachki,schwab2026deep},
while the second one is based on certain summability/integrability of coefficients in expansions,
such as \cite{schwab2026deep,2015CohenDeVore,Cohen2010Convergence,CohenDeVoreSchwab2011}.
Recently, a lower bound was achieved in \cite{2026LanthalerStuart} to show that finite differentiability alone does not give a universal algebraic upper rate.


In contrast to regularity assumptions imposed on the input data, the regularity of a solution operator concerns the smoothness of its dependence on the input function and has been studied less systematically. For parametric elliptic PDEs, analytic or holomorphic dependence of the coefficient-to-solution map was established and developed in~\cite{Cohen2010Convergence,CohenDeVoreSchwab2011,2015CohenDeVore}. 
This structure has been exploited in neural-operator approximation: 
holomorphy underlies the elliptic DeepONet estimates in~\cite{2022Lanthaler}, 
and~\cite{Herrmann2024Neural} develops neural and spectral surrogates for general holomorphic maps between infinite-dimensional Hilbert spaces. Related neural-network approximation results use quantified holomorphy of high-dimensional parametric maps~\cite{2019SchwabZech,2022OpschoorSchwabZech}. 
Recently, this tool has also been used in the analysis of Kolmogorov widths \cite{2026LiZhang}.
Collectively, these works show that the holomorphic regularity of the solution and its solution operator provides a key mechanism for deriving dimension-robust approximation results.



Nevertheless, to the best of our knowledge, the Barron regularity of general operators has been far less explored in the literature than its function counterpart,
despite its significant role in the analysis of neural networks.
A variation norm-based Barron class is introduced for two-layer neural networks mapping between Banach spaces in \cite{2022Korolev}. 
For scalar- or finite-dimensional-valued functionals, \cite{2022YangXiang} defines a spectral Barron space through a Fourier-type representation on an infinite-dimensional input space. 
More recently, \cite{2026YangShi} formulates a Hilbert-valued variation space for nonlinear operators using vector-valued Borel measures; \cite{yan2026neural} shows the error estimates for Transformer-based neural operators in the form of scaling laws based on output functions' regularity and intrinsic dimensions.


\subsection{Contributions}

The contributions of the present work are threefold.
\begin{enumerate}
\item
We define a spectral Barron norm for nonlinear operators by combining finite-dimensional input truncations with Hilbert-valued Fourier representations.
The norm is independent of the output orthonormal basis and controls the square sum of the scalar Barron norms of the output coefficients in any such basis.

\item
Exploiting the ellipticity for problem~\eqref{eq:elliptic-pde} together with several analytical tools, we are able to establish an arbitrary-order Barron regularity for the associated coefficient-to-solution operator. 
In particular, with a minimal assumption on the uniform coercivity, we obtain a nearly optimal bound for the norm: $O(C^{\tau}(4+\tau)^\tau)$ with the Barron regularity order $\tau$, 
where hidden constants are only dependent on the prescribed coefficient family and the fixed problem data; see Theorem \ref{the:ridge-map-barron}.

\item
We show that, for operators with sufficient Barron regularity, FNO-type spectral neural operators yield a guaranteed approximation.
The neural approximation term admits an error bound in the order of $O(C^{\tau}(4+\tau)^\tau)
K^{-(\frac12+\frac{\tau}{N})}$, where $K$ stands for the network width. Additionally, the explicit dependence of the total error on the input truncation dimension $N$ and output dimension $M$ is identified; see Theorem \ref{thm:fno-approximation-error-with-complexity2}. Last but not least, we observe the predicted rates in the numerical experiments.
\end{enumerate}

To this end, the analysis unfolds with the following ingredients. 
First, truncations of input and output expansions reduce the operator of interest to a finite dimensional map, which weakens the proposition to become analyzing the uniformity with respect to truncations.
Second, complexification of the elliptic problems, quantitative holomorphic extension, and Cauchy estimates yield summable Taylor coefficients. 
We then employ delicate estimates in Harmonic Analysis to convert these coefficients into arbitrary-order spectral Barron bounds;
see detailed discussions in Section \ref{sec:harmonic}.
Third, the coefficient maps can then be represented by neural networks with higher order approximation rates. The network is realized as an FNO architecture, of which the encoding corresponds explicitly to the retained coefficients under a prescribed basis expansion, and the decoding naturally evaluates this expansion with the learned coefficient representations from the hidden layers that use cosine networks approximation.
Lastly, one can choose the Barron regularity order to decide the network width, which eventually yields the exponential rates.

In the next section, we discuss some basic setting of Barron spaces including development of operator Barron norms. 
In Section \ref{sec:mainResults}, we present the main problems to be addressed in this paper.
In Section \ref{sec:harmonic}, we prepare some fundamental tools in harmonic analysis.
In Sections \ref{sec:BarronNormResults} and \ref{sec:Error}, we rigorously estimate the Barron regularity and the approximation rate of an FNO-type neural operator.
Some basic lemmas together with the numerical results will be presented in the supplemental material.

\subsection{Notations}
\label{sec:notations}
As a broad range of notation is used in this paper, here we provide a brief overview of our notation conventions, and more specific meanings are given in context.
In general, there are several categories as follows. First, domain notations include $U$, $U_n, \bbU^n_r,\bbU_1$, with $\bbU$ usually denoting the complexified region. 
In addition, blackboard-bold letters are used for the principal spaces and sets, such as $\mathbb H$, $\mathbb V$, $\mathbb X$, and the Barron spaces including $\mathbb B_{\tau}$ and $\mathbb B^{\bb}$. $\mathbb X$'s complexification is denoted by $\mathbb X_{\mathbb C}=\{x_1+i x_2: \; x_1,x_2\in\mathbb X\}$.
Calligraphic letters are used for mappings, e.g., $\mathcal G$, $\mathcal N$, and network components such as $\mathcal P_N$, $\mathcal Q_p$, and $\mathcal L_\ell$.
Moreover, 
$\tau,s, k$ and $q$ are reserved for regularity or orders of approximations.
$d,n,m,N$ and $M$ denote dimensions, and specifically $N$ and $M$ indicate input and output truncation dimensions.
Bold lowercase letters denote vectors, sequences, or multi-indices; in particular, $\by_N$ and $\bz_N$ are real and complex parameter vectors. The subscript $e$ denotes an extension, as used in defining the Barron norm. 
Generic constants are denoted by $B$, $C$, and $D$, with any relevant dependence stated explicitly.

%% file: Chapters/2_Formulation.tex
\section{Barron regularity of functions and operators}
\label{sec:barron-spaces}

In this section, we recall spectral Barron spaces, introduce a
weighted coefficient parameterization of the input family, and
define a spectral Barron norm for operators.

Motivated by Barron's Fourier-moment condition~\cite{Barron1993},
we use the following definition of spectral Barron spaces, 
where $\tau$ denotes the regularity order throughout this paper.
\begin{definition}[The spectral Barron space]
\label{Def:SpatialNorm}
Let \(\tau>0\). The spectral Barron space on $U\subseteq \mathbb{R}^{n}$, $n \in \N$, is defined by
\begin{equation}
\label{eq:SpatialNorm}
\mathbb B_{\tau}(U)
:=
\Bigl\{
g:U\to\mathbb C:
\|g\|_{\mathbb B_{\tau}(U)}
:=
\inf_{ g_e|_{U}=g}
\int_{\mathbb R^{n}}
\left(1+\|\xi\|_1^\tau \right) |\widehat{g}_e(\xi)|\,\dd\xi
<\infty
\Bigr\}.\end{equation}
Here, the infimum is taken over integrable extensions $g_e$ to $\mathbb R^{n}$ whose Fourier transforms
$\widehat {g}_e(\xi) = \int_{\R^{n}} g_e(x)e^{-2\pi i\ip{x}{\xi}}\,\dd x$
satisfy the displayed weighted integrability condition. The restriction
$g_e|_U=g$ is understood for the continuous representative given by Fourier inversion.
\end{definition}

In the present work, 
we consider real-valued parametric input functions on $\Omega$ through a weighted sequence space.
For this purpose, given
a sequence of weights 
$\{w_j\}_{j\in\mathbb N_0}$ satisfying $w_j\ge1$, we introduce
\begin{equation}
\label{eq_l1}
\ell_{w} := \left\{ \balpha=(\alpha_j)_{j\in\N_0}\in\mathbb R^{\mathbb N_0}: \sum\nolimits_{j \in \N_0} w_j \abs{\alpha_j} < \infty \right\},
~~~~
\| \balpha \|_{\ell_w}
:=
\sum_{j \in \N_0} w_j |\alpha_j|.
\end{equation}
Following~\cite{2015CohenDeVore,Cohen2010Convergence,CohenDeVoreSchwab2011},
let $\psi_j\in L^\infty(\Omega;\mathbb R)$ satisfy
$\|\psi_j\|_{L^\infty(\Omega)}\le1$ for $j\in\mathbb N_0$,
and denote $U_\infty:=[-1,1]^{\mathbb N_0}$ containing all $\mathbf{y} = (y_j)_{j\in\mathbb N_0}$. 
For a fixed $\bb=(b_j)_{j\in\mathbb N_0}\in\ell_w$, define the input family
\begin{equation}
\label{eq:barronfun}
\mathbb{B}^{\bb}  (\Omega) := \left\{ 
g(x;\by) = \sum\nolimits_{j \in \N_0}b_j y_j\psi_j(x):
~~
 \by \in U_\infty 
 \right\}.
\end{equation}
Since $\ell_w\subseteq\ell^1$, the series converges absolutely
in $L^\infty(\Omega)$ and defines the parameterization
\begin{equation}
\label{eq_Imap}
\Phib: U_\infty \rightarrow \mathbb{B}^{\bb}(\Omega) \subset L^\infty(\Omega;\mathbb R),
\qquad
\by \mapsto \left( \sum_{ j \in \N_0} b_j y_j \psi_j (x)\right)|_{\Omega}.
\end{equation}

\begin{remark}
The input family consists of spatial Barron functions for a
suitable choice of modes and weights. Fix $p>0$ and choose a box
$Q_L:=(-L/2,L/2)^d$ containing $\overline\Omega$.
Consider the real trigonometric basis
\begin{equation}
\label{eq_basis}
\psi_{0}(x) := 1,
~~~
\psi_{2j-1}(x) := \cos\left( \frac{2\pi\ip{\bk(2j-1)}{x}}{L}\right),
~~~
\psi_{2j}(x) := \sin\left( \frac{2\pi\ip{\bk(2j)}{x}}{L}\right),
~~ j\ge 1,
\end{equation}
where $\{\bk(j)\}_{j\in\mathbb N_0}$ is a fixed enumeration of
$\mathbb Z^d$ such that $\bk(0)=\mathbf 0$ and
$\bk(2j-1)=-\bk(2j)$ for $j\geq 1$,
and they satisfy $\| \psi_j \|_{L^{\infty}\left( Q_L \right)} = 1$. 
For these modes, set $w_j:=1+\|\bk(j)/L\|_1^p$, for the prescribed $\bb\in\ell_w$, the parametrized family $\mathbb B^\bb(\Omega)$ is uniformly bounded in $\mathbb B_p(\Omega)$ as the image of a coefficient box, since $\sup_{\by\in U_\infty}
\|\Phib(\by)\|_{\mathbb B_p(\Omega)}
\le C_{\Omega,p}\|\bb\|_{\ell_w}$ (see Lemma~\ref{lem:lem_phi1}).
The general parameterization~\eqref{eq_Imap} also allows other
uniformly bounded real modes, including suitably normalized
Karhunen--Lo\`eve modes or wavelets, provided that the amplitudes
satisfy the stated summability condition. Such representations
are used in both neural operator design and the error analysis; see, e.g.,
\cite{2021Bhattacharya,lu2021deeponet,li2021fourierneuraloperatorparametric,gupta2021multiwavelet}.

\end{remark}

To define a spectral Barron norm for operators, we first extend
the scalar definition \eqref{eq:SpatialNorm} to Hilbert-valued functions, using the Fourier transform on Bochner spaces (e.g.,  see
\cite[Section~2.4]{Hytonen2016Analysis}).
For $N\ge1$, consider the domain $U_N:=[-1,1]^N$.
\begin{definition}[Hilbert-valued spectral Barron norms]
\label{Def:HilbertBarron}
Let $\mathbb X$ be a separable real Hilbert space and let $\tau>0$.
For $G:U_N\to \mathbb X$, define
\begin{equation}
\label{eq:hilbert-barron-norm}
\|G\|_{\mathbb B_\tau(U_N;\mathbb X)}
:=
\inf_{G_e|_{U_N}=G}
\int_{\mathbb R^N}
(1+\|\xi\|_1^\tau)
\|\widehat G_e(\xi)\|_{\mathbb X_{\mathbb C}}\,d\xi.
\end{equation}
Here, $\mathbb X_{\mathbb C}$ is the complexification of $\mathbb X$, and the
infimum is over integrable extensions $G_e:\mathbb R^N\to \mathbb X_{\mathbb C}$
for which the integral above is finite. The Fourier transform is taken in
the parameter variables using Bochner integration, and restrictions
are understood through Fourier inversion, as in
Definition~\ref{Def:SpatialNorm}. If no such extension exists,
the norm is $+\infty$.
\end{definition}

Finally, we extend Definition~\ref{Def:HilbertBarron} from $U_N$ to the input family $\mathbb{B}^{\bb}$~\eqref{eq:barronfun} using the truncated parameterization
\begin{equation}\label{eq:truncation}
  {
\Phib \circ\mathcal P_N^\top:
U_N\to\mathbb B^\bb(\Omega),
\qquad
\by_N\mapsto g_N(\cdot;\by_N) := \sum_{j=0}^{N-1}b_jy_j\psi_j.
    }
\end{equation}
Here $\mathcal P_N:U_\infty \to U_N$ retains the first $N$ parameters, for $\by\in U_\infty$,
$\mathcal P_N \by=\by_N:=(y_0,\ldots,y_{N-1})\in U_N$,
and $\mathcal P_N^\top:U_N\to U_\infty$ appends zeros.
We then define the operator spectral Barron norm.

\begin{definition}[Operator spectral Barron norms]
\label{def:operator-barron}
Fix a separable real Hilbert space $\mathbb X$ and $\tau>0$.
For $\mathcal M:\mathbb B^{\bb}(\Omega)\to \mathbb X$, 
continuous with respect to $\|\cdot\|_{L^{\infty}(\Omega)}$- and $\|\cdot\|_{\mathbb X}$-norm topologies, respectively,
define its
input-truncated parameterization by
\begin{equation}
\label{eq_operator_decomp}
\mathcal{M}_N = \mathcal{M} \circ \Phib \circ \mathcal P^{\top}_N : 
~
U_N \rightarrow \mathbb X.
\end{equation}
The operator spectral Barron norm is
\begin{equation}
\label{eq_operatornorm}
\|\mathcal M\|_{\mathbb B_\tau(\mathbb B^{\bb};\mathbb X)}
:=
\sup_{N\geq1}
\|\mathcal M_N\|_{\mathbb B_\tau(U_N;\mathbb X)}.
\end{equation}
\end{definition}

\begin{lemma}[Barron bounds by basis]
\label{lem:vector-barron-embedding}
Let $\{e_m\}_{m\in \Lambda}$ be an orthonormal basis of $\mathbb X$, and let
$\mathcal M:\mathbb B^{\bb}(\Omega)\to \mathbb X$ be a continuous
operator with finite operator Barron norm defined in \eqref{eq_operatornorm}.
For every $N\ge1$, its input-truncated parameterization satisfies
\begin{equation}
\label{eq:barron-coordinate-bound}
\left(\sum_{m\in \Lambda}
\|\langle \mathcal M_N, e_m\rangle_{\mathbb X}\|_{\mathbb B_\tau(U_N)}^2
\right)^{1/2}
\le\|\mathcal M_N\|_{\mathbb B_\tau(U_N;\mathbb X)} \leq \|\mathcal M\|_{\mathbb B_\tau(\mathbb B^{\bb};\mathbb X)}.
\end{equation}
The sum is understood as finite when $\mathbb X$ is finite dimensional.
\end{lemma}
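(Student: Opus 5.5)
The second inequality follows directly from \eqref{eq_operatornorm}: the operator norm is a supremum over $N\ge1$, and $\|\mathcal M_N\|_{\mathbb B_\tau(U_N;\mathbb X)}$ is one of the terms in that supremum. The work is in the first inequality. I will prove it for an arbitrary admissible extension $G_e:\mathbb R^N\to\mathbb X_{\mathbb C}$ of $\mathcal M_N$ with finite weighted integral, and then take the infimum. If no admissible extension exists, the right-hand side is $+\infty$ and there is nothing to prove.

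\textbf{Step 1 (coordinate extensions).} Fix such a $G_e$. Extend $\langle\cdot,e_m\rangle_{\mathbb X}$ complex-linearly to $\mathbb X_{\mathbb C}$, and set $g_{m}:=\langle G_e,e_m\rangle$. This is a bounded linear functional applied to a Bochner-integrable function, so $g_m\in L^1(\mathbb R^N)$. Because bounded linear functionals commute with Bochner integrals,
\[
\widehat{g_m}(\xi)=\langle \widehat G_e(\xi),e_m\rangle \quad \text{for every } \xi .
\]
Commuting the functional with the Fourier inversion integral in the same way shows that the continuous representative of $g_m$ restricted to $U_N$ equals $\langle\mathcal M_N,e_m\rangle_{\mathbb X}$. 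Hence $g_m$ is an admissible extension in Definition~\ref{Def:SpatialNorm}, and
\[
\|\langle\mathcal M_N,e_m\rangle\|_{\mathbb B_\tau(U_N)}\le \int_{\mathbb R^N} w(\xi)\,|\langle\widehat G_e(\xi),e_m\rangle|\,d\xi,\qquad w(\xi):=1+\|\xi\|_1^\tau .
\]

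\textbf{Step 2 (Minkowski's integral inequality).} Consider the $\ell^2(\Lambda)$-valued function $\xi\mapsto \bigl(w(\xi)\,|\langle\widehat G_e(\xi),e_m\rangle|\bigr)_{m\in\Lambda}$. The $\ell^2$ norm of an integral is at most the integral of the $\ell^2$ norms. Applying this (equivalently, viewing it as duality of $\ell^2$ against finite truncations and then using monotone convergence) gives
\[
\Bigl(\sum_m \|\langle\mathcal M_N,e_m\rangle\|_{\mathbb B_\tau(U_N)}^2\Bigr)^{1/2}
\le \int_{\mathbb R^N} w(\xi)\Bigl(\sum_m |\langle \widehat G_e(\xi),e_m\rangle|^2\Bigr)^{1/2}d\xi .
\]

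\textbf{Step 3 (Parseval in the complexification).} Write $\widehat G_e(\xi)=a+ib$ with $a,b\in\mathbb X$. Since the $e_m$ are real, $\langle a+ib,e_m\rangle=\langle a,e_m\rangle+i\langle b,e_m\rangle$. Summing the squared moduli over $m$ and using Parseval in $\mathbb X$ gives $\|a\|^2+\|b\|^2$, which is exactly $\|\widehat G_e(\xi)\|_{\mathbb X_{\mathbb C}}^2$. Substituting this into Step 2 bounds the left-hand side by the weighted integral in \eqref{eq:hilbert-barron-norm}. Taking the infimum over $G_e$ then yields the first inequality. When $\mathbb X$ is finite dimensional, the same argument applies with finite sums.

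\textbf{Main obstacle.} The only delicate point is measure-theoretic: justifying that the functional commutes with the Fourier transform and with the restriction-by-inversion convention, and that Minkowski's inequality holds for a countable family. I would handle the latter by proving the bound for every finite subset of $\Lambda$ and then passing to the supremum over finite subsets.
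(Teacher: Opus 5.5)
Your proof is correct and follows the same route as the paper. You take an arbitrary admissible extension $G_e$, use its coordinates $\langle G_e,e_m\rangle$ as scalar extensions, apply Minkowski's inequality and then Parseval's identity in $\mathbb X_{\mathbb C}$, and take the infimum; the second inequality comes from the supremum in \eqref{eq_operatornorm}. Your extra justifications (commuting the functional with the Bochner Fourier transform and with inversion, and the finite-subset argument for Minkowski over a countable index set) make explicit steps the paper leaves implicit.
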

\begin{proof}
For any admissible extension $\mathcal M_{N,e}$, the scalar
function $\langle\mathcal M_{N,e},e_m\rangle_{\mathbb X_{\mathbb C}}$
extends $\langle\mathcal M_N,e_m\rangle_{\mathbb X}$.
The definition of the scalar norm, Minkowski's inequality,
and Parseval's identity in $\mathbb X_{\mathbb C}$ give
\[
\begin{aligned}
\left(\sum_m\|\langle \mathcal M_{N},e_m\rangle_{\mathbb X}\|_{\mathbb B_\tau(U_N)}^2\right)^{1/2}
&\le\left(\sum_m
\left[\int_{\mathbb R^N}(1+\|\xi\|_1^\tau)
|\langle\widehat{\mathcal M}_{N,e}(\xi),e_m\rangle_{\mathbb X_{\mathbb C}}|\,d\xi\right]^2
\right)^{1/2}\\
&\le\int_{\mathbb R^N}(1+\|\xi\|_1^\tau)
\|\widehat{\mathcal M}_{N,e}(\xi)\|_{\mathbb X_{\mathbb C}}\,d\xi.
\end{aligned}
\]
Taking the infimum over $\mathcal M_{N,e}$ proves the first
inequality in~\eqref{eq:barron-coordinate-bound}.
The second follows from~\eqref{eq_operatornorm}.
\end{proof}

\begin{remark}[Output basis and neural reconstruction]
\label{rem:barron-output-reconstruction}
Although the norm in \eqref{eq_operatornorm} is independent of the output basis of $\mathbb{X}$, since \eqref{eq:hilbert-barron-norm} uses only
the Hilbert norm, it does depend on the prescribed input
parameterization $\Phib$ and its truncations.
For neural approximation, one may still choose an orthonormal
basis $\{e_m\}$ and reconstruct the output as
$\sum_{m=1}^M N_{m,K}(\by_N)e_m$, where
$N_{m,K}$ approximates
$F_{m,N}=\langle\mathcal M_N,e_m\rangle_{\mathbb X}$.
By \eqref{eq:barron-coordinate-bound}, 
the sum of the squared
Barron norms of these coefficient maps is bounded by
$\|\mathcal M_N\|_{\mathbb B_\tau(U_N;\mathbb X)}^2$,
uniformly in $M$. 
Consequently, the operator Barron norm yields neural
approximation bounds for the retained coefficients that are
uniform over all orthonormal output bases.
The output truncation error from discarding the remaining coefficients depends
on the chosen basis; see Remark \ref{rem_basis} for details.
\end{remark}




\section{The target approximation problem and the main result}
\label{sec:mainResults}
In this section, we formulate the operator approximation problem
for a parametric linear elliptic equation 
\[
\mathfrak b(a;u(a),v)
=
\langle f,v\rangle_{\mathbb V',\mathbb V}
\qquad\text{for all }v\in \mathbb V,
\] and establish
Barron regularity and approximation error estimates for the
associated solution operator. 
We consider coefficients of the form
\[
a(x)=\bar{a}(x)+g(x),
~~~~
\bar{a}\in L^\infty(\Omega;\mathbb R),
~~~
g\in\mathbb B^{\bb}(\Omega).
\]
Using the parameterization in \eqref{eq:barronfun}, the coefficient and its
truncation to the first $N$ parameters can be written as
\begin{equation}
\label{eq:target-coefficient-expansion}
a(x;\by)
:=
\bar{a}(x)
+
\sum_{j\in\mathbb N_0}b_jy_j\psi_j(x),
\qquad
a_{N}(x;\by_N) = \bar{a}(x) + \sum_{j\le N-1} b_jy_j \psi_j(x),
\end{equation}
where $\by=(y_j)_{j\in\mathbb N_0}\in U_\infty$ and $\by_N=(y_0,\ldots,y_{N-1})\in U_N=[-1,1]^N$.

For the operator approximation, we consider the corresponding truncated variational problem:
find $u_N(\by_N)\in \mathbb V$ such that
\begin{equation}\label{eq:elliptic-pde_trunc}
\begin{gathered}
\mathfrak b\bigl(a_N(\cdot,\by_N);u_N(\by_N),v\bigr)
=\langle f,v\rangle_{\mathbb V',\mathbb V},
\qquad \forall v\in \mathbb V.
\end{gathered}
\end{equation}
When no ambiguity arises, we write $a_N=a_N(\cdot,\by_N)$ and $u_N=u_N(\by_N)$ for simplicity.
We impose the following assumptions to ensure uniform
well-posedness of the variational problems and to establish
Barron regularity of the solution operator.

\begin{assumption}\label{ass:ellipticity}
Let $\mathbb V$ be the real Hilbert space introduced above, and let $f\in \mathbb V'$ be fixed.
Let $\bar{a}\in L^\infty(\Omega;\mathbb R)$, let $\bb\in\ell_w$ with $w_j\ge1$, and let the real coefficient family be given by \eqref{eq:target-coefficient-expansion}.
There exist constants $C_{\mathfrak b}>0$, $C^{\star}>0$, and $C_{\star}>0$, independent of the parameter values and truncation levels, 
such that for all $u,v\in \mathbb V$,
\begin{enumerate}[label={(A\arabic*)},ref={A\arabic*},leftmargin=*]
\item\label{ass:ellipticity-A1} 
For every $a\in L^\infty(\Omega;\mathbb R)$, $\mathfrak b(a;\cdot,\cdot)$ is a bounded, symmetric, real bilinear form on $\mathbb V\times \mathbb V$.
Moreover, there holds
\[
|\mathfrak b(a;u,v)-\mathfrak b(\widetilde a;u,v)|
\le C_{\mathfrak b}\|a-\widetilde a\|_{L^\infty(\Omega)}\|u\|_{\mathbb V}\|v\|_{\mathbb V},
~~~
\forall a,\widetilde a\in L^\infty(\Omega;\mathbb R).
\]
\item\label{ass:ellipticity-A2}
The forms are uniformly bounded and coercive on the real coefficient family:
\[
|\mathfrak b(a(\cdot;\by);u,v)|
\le C^{\star}\|u\|_{\mathbb V}\|v\|_{\mathbb V},
\qquad
\mathfrak b(a(\cdot;\by);u,u)
\ge C_{\star}\|u\|_{\mathbb V}^2,
~~~
\forall \by\in U_\infty.
\]
The same bounds apply to $a_N(\cdot;\by_N)$ by extending $\by_N$ with zeros.
\item\label{ass:ellipticity-B2} 
Define $\mathbb V_{\mathbb C} = \mathbb V + i\mathbb V$.
The form $\mathfrak{b}$ extends to $\mathbb V_{\mathbb C}\times \mathbb V_{\mathbb C}$, linearly in the first argument and conjugate-linearly in the second, and the coefficient-difference estimate in (\ref{ass:ellipticity-A1}) holds for complex coefficients and complex arguments.
$a\mapsto\mathfrak b(a;\cdot,\cdot)$ is complex-affine, i.e.,
\[
\mathfrak b((1-t)a+t\widetilde a;u,v)
=(1-t)\mathfrak b(a;u,v)
+t\mathfrak b(\widetilde a;u,v),
~~~ \forall t\in \mathbb{C}.
\]
\end{enumerate}
\end{assumption}

\begin{unnumberedremark}
Assumption~(\ref{ass:ellipticity-A2}) directly imposes uniform boundedness and coercivity on the admissible family; no smallness condition on $\|\bb\|_{\ell_w}$ is imposed.
The form acts on the entire coefficient field $a(\cdot;\by)$, so these inequalities are quantified over $\by$ and $u,v$, rather than pointwise in $x$.
For the scalar diffusion problem, when $\|v\|_{\mathbb V}=\|\nabla v\|_{L^2}$, 
the pointwise bounds
$0<C_{\star}\le a(x;\by)\le C^{\star}$ for almost every $x\in\Omega$ and every admissible $\by$ imply this condition.
Assumptions~(\ref{ass:ellipticity-A1})-(\ref{ass:ellipticity-B2}) yield holomorphic extensions to complex neighborhoods of the real parameter cubes.
The summability condition $\bb\in\ell_w\subset\ell^1$
allows us to split $\bb$ into a fixed finite set
and a sufficiently small tail, as required by
\eqref{eq:ridge-tail-smallness} in the Barron regularity proof.
\end{unnumberedremark}

\begin{theorem}
\label{thm:real-well-posedness}
Under assumptions~(\ref{ass:ellipticity-A1})--(\ref{ass:ellipticity-A2}), the variational problems \eqref{eq:elliptic-pde} and \eqref{eq:elliptic-pde_trunc} admit unique solutions for every admissible real parameter, with
\[
\|u(\by)\|_{\mathbb V}\le C_{\star}^{-1}\|f\|_{\mathbb V'},
\qquad
\|u_N(\by_N)\|_{\mathbb V}\le C_{\star}^{-1}\|f\|_{\mathbb V'},
\qquad N\ge1.
\]
\end{theorem}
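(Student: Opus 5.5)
This is a direct application of the Lax--Milgram lemma, carried out parameter by parameter. Fix an admissible real parameter $\by\in U_\infty$. By~(\ref{ass:ellipticity-A1}), $\mathfrak b(a(\cdot;\by);\cdot,\cdot)$ is a real, symmetric, bounded bilinear form on $\mathbb V\times\mathbb V$. By~(\ref{ass:ellipticity-A2}), it satisfies the uniform bounds $|\mathfrak b(a(\cdot;\by);u,v)|\le C^\star\|u\|_{\mathbb V}\|v\|_{\mathbb V}$ and $\mathfrak b(a(\cdot;\by);u,u)\ge C_\star\|u\|_{\mathbb V}^2$. Since $f\in\mathbb V'$ is a bounded linear functional, Lax--Milgram gives a unique $u(\by)\in\mathbb V$ solving \eqref{eq:elliptic-pde} with $a=a(\cdot;\by)$. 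Because the form is symmetric and coercive, one could equally invoke the Riesz representation theorem in the energy inner product $\mathfrak b(a(\cdot;\by);\cdot,\cdot)$, which is equivalent to $\langle\cdot,\cdot\rangle_{\mathbb V}$ by the two bounds in~(\ref{ass:ellipticity-A2}).

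For the a priori bound, test the equation with $v=u(\by)$ and use coercivity:
\[
C_\star\|u(\by)\|_{\mathbb V}^2\le \mathfrak b(a(\cdot;\by);u(\by),u(\by))=\langle f,u(\by)\rangle_{\mathbb V',\mathbb V}\le\|f\|_{\mathbb V'}\|u(\by)\|_{\mathbb V}.
\]
If $u(\by)=0$ the bound is trivial; otherwise divide by $\|u(\by)\|_{\mathbb V}$. This gives $\|u(\by)\|_{\mathbb V}\le C_\star^{-1}\|f\|_{\mathbb V'}$. The constant is independent of $\by$ because $C_\star$ is uniform over $U_\infty$.

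For the truncated problem \eqref{eq:elliptic-pde_trunc}, fix $N\ge1$ and $\by_N\in U_N$. Observe that $a_N(\cdot;\by_N)=a(\cdot;\mathcal P_N^\top\by_N)$, since appending zeros to $\by_N$ kills the tail terms in \eqref{eq:target-coefficient-expansion}. Moreover $\mathcal P_N^\top\by_N\in U_\infty$, so the same bounds apply; this is exactly the last sentence of~(\ref{ass:ellipticity-A2}). The argument above then applies verbatim, giving existence, uniqueness and $\|u_N(\by_N)\|_{\mathbb V}\le C_\star^{-1}\|f\|_{\mathbb V'}$ uniformly in $N$ and $\by_N$.

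There is no real obstacle. The only point needing care is that the coefficient series converges in $L^\infty(\Omega)$, so that $a(\cdot;\by)$ is a legitimate element of $L^\infty(\Omega;\mathbb R)$ to which~(\ref{ass:ellipticity-A1}) applies. This follows from $\|\psi_j\|_{L^\infty}\le1$, $|y_j|\le1$ and $\bb\in\ell_w\subseteq\ell^1$, as already noted after \eqref{eq:barronfun}.
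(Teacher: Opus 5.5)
Your proposal is correct and follows the same route as the paper, which simply invokes the real Lax--Milgram theorem together with the uniform bounds in (\ref{ass:ellipticity-A2}). Testing with $v=u(\by)$ to get the a priori bound, and observing that $a_N(\cdot;\by_N)=a(\cdot;\mathcal P_N^\top\by_N)$, spell out details the paper leaves implicit.
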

\begin{proof}
It simply follows from the real Lax--Milgram theorem and the uniform bounds in Assumption~(\ref{ass:ellipticity-A2}). 
\end{proof}

Under Assumption~\ref{ass:ellipticity}, for each admissible real
parameter $\by\in U_\infty$ and each truncated parameter
$\by_N\in U_N$, let $u(\by)\in \mathbb V$ and
$u_N(\by_N)\in \mathbb V$ denote the solutions corresponding to
$a(\cdot;\by)$ and $a_N(\cdot;\by_N)$, respectively.
The preceding theorem gives uniform well-posedness on the prescribed coefficient family $\mathbb B^{\bb}(\Omega)$.

Define the centered coefficient-to-solution operator by
\begin{equation}
\label{solution_opl_eq0}
\mathcal G^\dagger:\mathbb B^{\bb}(\Omega) \to \mathbb V,
\qquad
g\mapsto u(\bar{a}+g).
\end{equation}
Equivalently,
$\mathcal G^\dagger(a-\bar{a})=u(a)$. Centering the input at $\bar{a}$
aligns the operator domain with the parameterization of the
coefficient perturbations.

To study the Barron regularity of $\mathcal G^\dagger$ within the
framework of \eqref{eq_operatornorm}, define the induced
truncated parameter-to-solution maps
\begin{equation}
\label{eq_Gdecomp}
\begin{aligned}
\mathcal G^{\dagger}_N
&:=
\mathcal G^\dagger
\circ\Phib
\circ\mathcal P^{\top}_N
:
U_N\to \mathbb V.
\end{aligned}
\end{equation}
Thus,  $\mathcal G^{\dagger}_N(\by_N)=u_N(\by_N)$.
For output, fix an $\mathbb H$-orthonormal basis
$\{e_m\}_{m\ge1}\subset \mathbb V$, and define 
\begin{equation}
\label{eq_VM_proj}
\mathbb V_M:=\operatorname{span}\{e_1,\ldots,e_M\}\subset \mathbb V,
~~
\text{and}
~~
\text{$\mathbb H$-orthogonal projection} ~ \Pi_M:\mathbb H\to \mathbb V_M
\end{equation}
Write
\begin{equation}
\label{eq:solution-spectral-expansion}
u_N(\by_N)
=
\sum_{m=1}^{\infty}F_{m,N}(\by_N)e_m,
~~~~~
F_{m,N}(\by_N):= \langle u_N(\by_N),e_m\rangle_{\mathbb H},
\end{equation}
with convergence in $\mathbb H$. The operator Barron norm is
\begin{equation}
\label{cts_barronnorm}
\|\mathcal G^\dagger\|_{\mathbb B_\tau(\mathbb B^{\bb};\mathbb H)}
=
\sup_{N\geq1}
\|u_N\|_{\mathbb B_\tau(U_N;\mathbb H)}.
\end{equation}
By Lemma~\ref{lem:vector-barron-embedding}, this norm controls the
square sum of the scalar Barron norms of $F_{m,N}$ for any chosen
output basis. The following theorem gives a bound uniform in the
input truncation level. Its proof is given in
Section~\ref{ssec:proof-barron}.

\begin{theorem}
\label{the:ridge-map-barron}
Under Assumption~\ref{ass:ellipticity}, 
the solution operator satisfies
$\mathcal G^\dagger\in C(\mathbb B^{\bb}(\Omega),\mathbb H)$,
where the input family carries the $L^\infty(\Omega)$ topology
and $\mathbb H$ carries its norm topology.
Moreover, there exist constants
$C_B,D_{B}\ge1$, depending on the fixed problem data and the prescribed data $\bb$, 
but independent of $\tau$, $M$, and $N$, such that, for every $\tau>0$,
\begin{equation}
\label{eq:ridge-map-Gamma}
\begin{aligned}
\|\mathcal G^\dagger\|_{\mathbb B_\tau(\mathbb B^{\bb};\mathbb H)}
&\le C_S
\|\mathcal G^\dagger\|_{\mathbb B_\tau(\mathbb B^{\bb};\mathbb V)}   \le C_B [D_{ B}(4+\tau)]^\tau.
\end{aligned}
\end{equation}
Both operator norms are those of Definition~\ref{def:operator-barron}.
\end{theorem}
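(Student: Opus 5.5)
The plan has three stages: holomorphic extension of $u_N$, a Taylor expansion with summable coefficients, and a conversion of that expansion into a Barron extension.

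\textbf{Easy parts.} Continuity of $\mathcal G^\dagger$ comes from Strang's lemma. Subtract the two variational problems and use (\ref{ass:ellipticity-A1}) and (\ref{ass:ellipticity-A2}):
\[
\|u(a)-u(\widetilde a)\|_{\mathbb V}\le C_\star^{-2}C_{\mathfrak b}\|f\|_{\mathbb V'}\|a-\widetilde a\|_{L^\infty}.
\]
The first inequality in \eqref{eq:ridge-map-Gamma} holds because \eqref{eq0_poincare} gives $\|\widehat G_e(\xi)\|_{\mathbb H_{\mathbb C}}\le C_S\|\widehat G_e(\xi)\|_{\mathbb V_{\mathbb C}}$ pointwise in $\xi$. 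So every $\mathbb V$-admissible extension is $\mathbb H$-admissible with integral at most $C_S$ times larger.

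\textbf{Holomorphic extension.} Fix $J$, independent of $N$, such that
\[
C_{\mathfrak b}\sum_{j\ge J}|b_j|\le C_\star/4 .
\]
This is the tail-smallness split mentioned in the remark. On the finitely many leading coordinates, use compactness of $U_J$ and the perturbation estimate in (\ref{ass:ellipticity-B2}). This gives a radius $\delta>0$ such that complex $y_j$ with $\operatorname{dist}(y_j,[-1,1])<\delta$ for $j<J$ keep $\operatorname{Re}\mathfrak b(a;v,\bar v)\ge C_\star\|v\|^2/2$. For the tail indices $j\ge J$, allow polydisc radii $\rho_j\ge1$, enlarged by weights tied to $w_j$ while respecting the smallness budget.

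Complex Lax--Milgram then gives a holomorphic extension $u_N(\bz_N)$ on a neighbourhood of $U_N$, bounded by $2C_\star^{-1}\|f\|_{\mathbb V'}$ uniformly in $N$. Holomorphy follows from the affine dependence on $\bz$ in (\ref{ass:ellipticity-B2}) and the Neumann series.

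\textbf{Conversion to a Barron norm.} Cauchy estimates yield Chebyshev (or Taylor, after rescaling to a smaller cube) coefficients in $\mathbb V$ bounded by $\|v_{\bnu}\|_{\mathbb V}\le C\prod_j\rho_j^{-\nu_j}$. Since $y_j=\cos\theta_j$, a Chebyshev expansion $\sum_{\bnu}v_{\bnu}\prod_j T_{\nu_j}(y_j)$ is a cosine series in $\theta$, but that is not a Fourier transform in $y$.

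I would instead construct the extension $G_e(\by)=\chi(\by)\,\widetilde u_N(\by)$. Here $\chi$ is a product cutoff equal to $1$ on $U_N$, and $\widetilde u_N$ is a periodization of the holomorphic function on a slightly larger box $[-1-\delta',1+\delta']^N$. Write it via Fourier series in each variable, $\sum_{\bk\in\mathbb Z^N}c_{\bk}e^{i\pi\langle\bk,\by\rangle/(1+\delta')}$, with analytic decay $\|c_{\bk}\|\le C\prod_j\rho_j^{-|k_j|}$. Then bound
\[
\int(1+\|\xi\|_1^\tau)\|\widehat G_e\|\,d\xi\lesssim\sum_{\bk}\|c_{\bk}\|\int(1+\|\xi\|_1^\tau)|\widehat\chi(\xi-\bk')|\,d\xi .
\]
The task is to make this uniform in $N$. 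For coordinates $j\ge J$ with $k_j=0$ the factor must contribute exactly $1$, so the cutoff in those coordinates must be trivial. I would therefore only cut off in coordinates where the function actually varies, and use the affine structure: $\nu_j=0$ for most $j$ gives $\widehat\chi_j=\delta$-like contributions.

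Concretely, expand $u_N$ as a power series $\sum_{\bnu}t_{\bnu}\by^{\bnu}$ in the tail variables, with coefficients analytic in the first $J$ variables. Extend each monomial $y_j^{\nu}$ by a fixed bump-modified function with Barron norm at most $(C\nu)^{\cdot}$ times a factor $C^\tau\tau^\tau$. Then use $\|\xi\|_1^\tau\le |\bnu|^{\tau}\max(\cdot)$-type splitting, of the form $\|\xi\|_1^\tau\le n^{\tau}\sum_j|\xi_j|^\tau$ over active coordinates, to reduce to one-dimensional estimates. For $\nu\ge1$, a one-dimensional Gevrey-type bump yields $\int|\xi|^\tau|\widehat{\phi}|\le (C(4+\tau))^\tau$. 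This is where the $(4+\tau)^\tau$ comes from, via derivative bounds $\|\phi^{(k)}\|\le C^k k^{k}$ together with the integration-by-parts loss of $2$ derivatives plus Sobolev.

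\textbf{Main obstacle.} The hardest step is the $N$-uniform summation $\sum_{\bnu}\|t_{\bnu}\|\prod_{j:\nu_j>0}(C\nu_j)^{\ldots}\,|\operatorname{supp}\bnu|^\tau$. I would absorb the polynomial growth into the geometric decay $\rho_j^{-\nu_j}$, using $\sum_{\bnu}\prod\rho_j^{-\nu_j}(C\nu_j)<\infty$ when $\sum_j\rho_j^{-1}<\infty$. I would choose $\rho_j$ so that $(\rho_j-1)|b_j|$ is summable within the budget and $\rho_j\to\infty$. Letting $\rho_j\to\infty$ requires care with $\bb\in\ell_w$; if only $\bb\in\ell^1$ is available, I would first choose $\rho_j=1+\epsilon_j/|b_j|$ with $\sum\epsilon_j$ small.
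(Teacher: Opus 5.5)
Your continuity argument, the $C_S$ comparison, and the overall architecture match the paper. That architecture is: holomorphic extension, a finite head plus a small $\ell^1$ tail, and a Taylor expansion only in the tail variables with coefficients analytic in the head. The step you flag as the main obstacle, however, fails as planned.

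Cauchy estimates on one fixed polydisc give $\|t_{\bnu}\|\le C\prod_j\rho_j^{-\nu_j}$. Any admissible extension of $\by^{\bnu}$ has $M_0\ge\max_{[-1,1]^{\mathfrak J^c}}|\by^{\bnu}|=1$. So your termwise bound is at least $C\sum_{\bnu}\prod_j\rho_j^{-\nu_j}=C\prod_{j\in\mathfrak J^c}(1-\rho_j^{-1})^{-1}$, which is bounded uniformly in $N$ only if $\sum_j\rho_j^{-1}<\infty$. With $\rho_j=1+\epsilon_j/|b_j|$ and $\sum_j\epsilon_j$ within the budget, this forces $\epsilon_j\ge|b_j|$ eventually. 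Hence $\sum_j|b_j|/\epsilon_j<\infty$, and
$\sum_j|b_j|^{1/2}\le(\sum_j|b_j|/\epsilon_j)^{1/2}(\sum_j\epsilon_j)^{1/2}<\infty$.
You would therefore need $\bb\in\ell^{1/2}$. The theorem allows arbitrary weights $w_j\ge1$, so only $\bb\in\ell^1$ is available (e.g.\ $|b_j|\sim j^{-3/2}$ breaks your plan).

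The missing idea is the Cohen--DeVore--Schwab recursion of Lemma~\ref{lem:tail-taylor-coeff}. Differentiate $(\mathcal A_{\mathfrak J}(\bz_{\mathfrak J})+\sum_{j\in\mathfrak J^c}z_j\mathcal A_j')u_N=f$ at $\bz_{\mathfrak J^c}=\bm 0$ to get $t_{\bnu}=-\mathcal A_{\mathfrak J}^{-1}\sum_{j}\mathcal A_j't_{\bnu-\mathbf e_j}$. This yields $\|t_{\bnu}\|\le2C^f\frac{\|\bnu\|_1!}{\bnu!}\bm\kappa^{\bnu}$ uniformly on the head neighbourhood, which is morally Cauchy estimates with $\bnu$-dependent radii. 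Then $\sum_{\bnu}\frac{\|\bnu\|_1!}{\bnu!}(B\bm\kappa)^{\bnu}=\sum_m(B\sum_{j\in\mathfrak J^c}\kappa_j)^m$ converges as soon as the $\ell^1$ tail is small. Consequently, extending $\by^{\bnu}$ may cost at most $B^{\|\bnu\|_1}$, with $B$ fixed before the tail is chosen. Your factor $\prod_{\nu_j>0}(C\nu_j)\le(2C)^{\|\bnu\|_1}$ is compatible with this.

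Two further pieces of your conversion also need repair.

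\emph{Inactive coordinates.} Definition~\ref{Def:HilbertBarron} admits only extensions integrable on $\R^N$. Leaving the coordinates with $\nu_j=0$ uncut, which puts a Dirac factor in $\widehat G_e$, is therefore inadmissible. Cutting them off one at a time costs $\prod_j M_0(\chi_j)$, and every factor is strictly greater than $1$, because an integrable function equal to $1$ on an interval cannot have a nonnegative Fourier transform. This product grows with the number of tail coordinates unless the cutoffs are specially designed. The paper avoids the issue with a single integrable extension $g_e^{\bm0}$ of $1$ on the whole tail cube, with $M_0+M_\tau\le2$ independent of dimension. It combines this with polarization, $\by^{\bnu}=\frac{m^m}{2^mm!}\sum_{\boldsymbol\varepsilon}\omega_{\boldsymbol\varepsilon}\langle\mathbf a_{\boldsymbol\varepsilon},\by\rangle^m$ with $m=\|\bnu\|_1$ and $\|\mathbf a_{\boldsymbol\varepsilon}\|_1\le1$. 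Every monomial extension then has dimension-free moments at most $160q(8q)^s(2e)^{\|\bnu\|_1}$ (Lemma~\ref{lem:uniform-tail-moments}).

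\emph{The $\tau$-dependence.} No fixed compactly supported bump satisfies $\|\phi^{(k)}\|\le C^kk^k$ for all $k$; by Denjoy--Carleman that class is quasianalytic. A Gevrey-$s$ bump with $s>1$ yields $\tau^{s\tau}$ rather than $(4+\tau)^\tau$. The cutoff must depend on $\tau$, as with the $C^q$ B-spline cutoff of Lemma~\ref{lem_cutoff}. That cutoff satisfies $\|\chi_{q,\delta}^{(k)}\|_{L^1}\le2[2(q+1)/\delta]^{k-1}$ only for $k\le q=\lceil\tau\rceil+O(1)$, and is followed by exactly $q$ integrations by parts.

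\emph{The head coefficients.} The same device, combined with Cauchy estimates, is what $t_{\bnu}(\by_{\mathfrak J})$ need (Lemma~\ref{lem:uniform-head-moments}). It gives moments proportional to $\|t_{\bnu}\|_{L^\infty}$, so the sum over $\bnu$ can be carried out. Your periodization on a larger box does not produce geometric Fourier decay, since the periodized function is in general not even continuous across the box faces.
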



\begin{remark}
The dependence on $\tau$ in \eqref{eq:ridge-map-Gamma}
is almost optimal.
Fix a nonzero forcing $f\in\mathbb V'$ and consider scalar
diffusion with homogeneous Dirichlet boundary conditions
and the spatially constant coefficient
$a(x;y)=2-y$, $y\in[-1,1]$.
This corresponds to
$\bar a=2$, $b_0=1$, $\psi_0=-1$, and $b_j=0$ for $j\ge1$.
Let $u_f\in\mathbb V$ denote the solution with unit coefficient
and the same forcing $f$. 
Then $u(y)=u_f/(2-y)$.
For $\mathbb X\in\{\mathbb V,\mathbb H\}$, every integer $m\ge1$, and every admissible
extension $u_e$ in \eqref{eq:hilbert-barron-norm} with $\tau=m$,
differentiation with respect to $y$ under the Fourier integral gives
\[
\frac{m!}{2^{m+1}}\|u_f\|_{\mathbb X}
=\|u^{(m)}(0)\|_{\mathbb X}
\le(2\pi)^m\int_{\R}|\xi|^m
\|\widehat u_e(\xi)\|_{\mathbb X_{\C}}\,\dd\xi.
\]
Taking the infimum over these extensions and using
\eqref{eq_operatornorm}, we obtain
\[
\begin{aligned}
\|\mathcal G^\dagger\|_{\mathbb B_m(\mathbb B^{\bb};\mathbb X)}
\ge \|\mathcal G_1^\dagger\|_{\mathbb B_m([-1,1];\mathbb X)}
 = \|u\|_{\mathbb B_m([-1,1];\mathbb X)} 
\ge\frac{m!}{2(4\pi)^m}\|u_f\|_{\mathbb X}
\ge\frac12\left(\frac{m}{4\pi e}\right)^m\|u_f\|_{\mathbb X}.
\end{aligned}
\]
Here the last inequality follows from $m!\ge(m/e)^m$.
Thus, for $\tau=m$, the $m^m$ growth in
\eqref{eq:ridge-map-Gamma} is sharp up to factors exponential in $m$.
\end{remark}

Next, for FNO approximation, we specify the measure.
For each \(j\in\mathbb N_0\), let $\rho^{(j)}$ be the uniform probability measure on $[-1,1]$, with $\dd\rho^{(j)}(t)=\frac12\,\dd t$.
Equip $U_\infty$ with its product $\sigma$-algebra and product probability measure
$\rho:=\bigotimes_{j\in\mathbb N_0}\rho^{(j)}$, and let
\begin{equation}
\label{eq_prob_measure}
\rho_N:=\bigotimes_{j<N}\rho^{(j)},
\qquad
\dd\rho_N(\by_N)=2^{-N}\prod_{j<N}\dd y_j.
\end{equation}
For a Hilbert space $\mathbb X$, we write
\[
\|v_N\|_{L^2(U_N,\rho_N;\mathbb X)}^2
:=\int_{U_N}\|v_N(\by_N)\|_{\mathbb X}^2\,\dd\rho_N(\by_N).
\]
For scalar-valued functions, the target space $\mathbb X=\mathbb R$ is omitted.
All parameter $L^2$ norms below use these probability measures;
the scalar and Hilbert-valued spectral Barron norms are given by
Definitions~\ref{Def:SpatialNorm} and~\ref{Def:HilbertBarron}, respectively.
The $L^\infty(\Omega)$ stability estimate \eqref{eq:solution-lipschitz}, together with the uniform solution bound, ensures that the solution maps considered below are measurable and belong to the corresponding parameter $L^2$ spaces.

Now, we present the following two main theorems of this work whose detailed proofs are postponed to Sections \ref{subsec:thm_error1} and \ref{subsec:error_2}.

\begin{theorem}
\label{thm:fno-approximation-error-with-complexity}
Under Assumption~\ref{ass:ellipticity}, 
let $N,M,K\ge1$ be integers,
and let $\Omega$ be a periodic domain.
Given the trigonometric parameterization of the input data and an $\mathbb H$-orthonormal basis $\{e_m\}_{m=1}^{\infty}\subset \mathbb V$,
{for every $\tau >0$}, there exists an FNO-type spectral neural operator described in Section~\ref{sec:Error},
\(
\mathcal{N}: \mathbb{B}^{\bb} \to \mathbb V
\)
with width $\max\{4 N, MK, 4M\}$,
such that
\begin{equation}
\label{eq:fno-total-error-with-complexity-high}
\begin{aligned}
&\sup_{n \ge N} \norm{ \mathcal{G}^{\dag}(g_n)-\Ncal(g_n) }_{{L^2(U_n,\rho_n; \mathbb H)}} \\
&\le
\varepsilon_N^{\mathrm{in}}
+ \varepsilon_{N,M}^{\mathrm{out}}
+
4\left( \max\{8\pi e,2(\tau+1)\}\right)^\tau \| \mathcal{G}^{\dag} \|_{\mathbb B_\tau({\mathbb B^{\bb};\mathbb H})} K^{-(\frac12+\frac{\tau}{N})},
\end{aligned}
\end{equation}
where 
\begin{equation}
\label{eq:fno-total-error-with-complexity-high-1}
\begin{aligned}
\varepsilon_N^{\mathrm{in}}
:=\sup_{n\ge N}
\norm{\mathcal G^\dagger(g_n)-\mathcal G^\dagger(g_N)}
_{L^2(U_n,\rho_n;\mathbb H)},
~~~~
\varepsilon_{N,M}^{\mathrm{out}}
:=
\norm{(I-\Pi_M)u_N}_{L^2(U_N,\rho_N;\mathbb H)}.
\end{aligned}
\end{equation}
Here $\varepsilon^{\mathrm{in}}_N$ measures the input truncation error, tending to zero as $N\to\infty$,
and $\varepsilon_{N,M}^{\mathrm{out}}$ measures the reconstruction error of $u_N$, tending to zero as $M\to\infty$ for each $N$, 
with $\Pi_M$ in \eqref{eq_VM_proj}.
\end{theorem}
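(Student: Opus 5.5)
We will prove the estimate \eqref{eq:fno-total-error-with-complexity-high} with a three-term triangle inequality. Fix $n\ge N$ and $\by_n\in U_n$, and write $\by_N=\mathcal P_N\by_n$. The network will only read the first $N$ input coefficients, so $\Ncal(g_n)=\Ncal(g_N)$. We then split
\[
\mathcal G^\dagger(g_n)-\Ncal(g_n)
=\bigl(\mathcal G^\dagger(g_n)-\mathcal G^\dagger(g_N)\bigr)
+(I-\Pi_M)u_N(\by_N)
+\bigl(\Pi_M u_N(\by_N)-\Ncal(g_N)\bigr).
\]
Taking $L^2(U_n,\rho_n;\mathbb H)$ norms, the first term gives $\varepsilon_N^{\mathrm{in}}$ after the supremum over $n\ge N$. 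The second depends only on $\by_N$, and $\rho_n$ has $\rho_N$ as its marginal, so it gives exactly $\varepsilon_{N,M}^{\mathrm{out}}$. The third term is $\sum_{m\le M}(F_{m,N}-N_{m,K})(\by_N)e_m$. By $\mathbb H$-orthonormality its squared norm is $\sum_{m\le M}\|F_{m,N}-N_{m,K}\|_{L^2(U_N,\rho_N)}^2$.

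The FNO is built as in Remark~\ref{rem:barron-output-reconstruction}. For the \emph{encoder}, periodicity of $\Omega$ and the trigonometric basis \eqref{eq_basis} mean that a Fourier-layer lifting $\mathcal P_N$ extracts $b_jy_j$, $j<N$, by spectral projection onto the first $N$ modes. Dividing by $b_j$ (we may assume $b_j\neq0$) recovers $\by_N$, and the sine/cosine and sign bookkeeping accounts for the width $4N$. The \emph{hidden layers} realize, for each $m\le M$, a width-$K$ cosine network $N_{m,K}(\by_N)=\sum_{k\le K}c_{m,k}\cos(2\pi\langle\xi_{m,k},\by_N\rangle+\theta_{m,k})$ in parallel, giving width $MK$. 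The \emph{decoder} $\mathcal Q$ multiplies these coefficients by the fixed functions $e_m$ and sums, using width $4M$ channels.

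The heart of the proof is a scalar approximation lemma that we would state and prove separately. If $F\in\mathbb B_\tau(U_N)$, then there is a width-$K$ cosine network $N_K$ with
\[
\|F-N_K\|_{L^2(U_N,\rho_N)}\le 4\bigl(\max\{8\pi e,2(\tau+1)\}\bigr)^\tau\|F\|_{\mathbb B_\tau(U_N)}K^{-(\frac12+\frac\tau N)}.
\]
We plan to prove it in four steps.
\begin{enumerate}
\item Take an extension $F_e$ whose Barron integral is within a factor close to $1$ of $\|F\|_{\mathbb B_\tau(U_N)}$, and write $F(\by)=\int \widehat F_e(\xi)e^{2\pi i\langle\xi,\by\rangle}\,d\xi$, of which we keep the real part.
\item Split frequencies at a radius $R$. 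The high-frequency part is bounded in $L^\infty$ by $R^{-\tau}\int\|\xi\|_1^\tau|\widehat F_e|$.
\item Approximate the low-frequency part with the Maurey/Barron sampling argument applied to the probability measure proportional to $|\widehat F_e|$. This gives an error of order $K^{-1/2}\int|\widehat F_e|$.
\item To obtain the extra factor $K^{-\tau/N}$, use a stratified or discretized sampling: cover the low-frequency ball by roughly $K$ cells, which gives cell diameter about $RK^{-1/N}$. The Lipschitz bound of $e^{2\pi i\langle\xi,\by\rangle}$ in $\xi$ over $\by\in U_N$ then yields a Barron–Siegel–Xu type higher-order rate. Balancing $R$ against $K$ produces the $\tau$-dependent constant, with $\max\{8\pi e,2(\tau+1)\}^\tau$ coming from Stirling-type bounds on $\sum_k R^k/k!$-style quantities.
\end{enumerate}
This lemma is the main obstacle. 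Getting the exact constant and the exponent $\frac12+\frac\tau N$ will require care, and if the discretization argument fails we would fall back on the cited higher-order spectral Barron results~\cite{SiegelXu2022HighOrder}, adapted to cosine activations.

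Given the lemma, we apply it to each $F_{m,N}$ and sum the squares. Lemma~\ref{lem:vector-barron-embedding} then gives $\bigl(\sum_{m\le M}\|F_{m,N}\|_{\mathbb B_\tau(U_N)}^2\bigr)^{1/2}\le\|\mathcal G^\dagger\|_{\mathbb B_\tau(\mathbb B^{\bb};\mathbb H)}$ uniformly in $M$, which bounds the third term by the last summand of \eqref{eq:fno-total-error-with-complexity-high}. Finiteness of this norm comes from Theorem~\ref{the:ridge-map-barron}. That $\varepsilon_N^{\mathrm{in}}\to0$ follows from the Lipschitz stability $\|u(a)-u(\tilde a)\|_{\mathbb V}\lesssim\|a-\tilde a\|_{L^\infty}$, obtained from (A1)–(A2), together with $\sum_{j\ge N}|b_j|\to0$. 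That $\varepsilon_{N,M}^{\mathrm{out}}\to0$ follows by dominated convergence.
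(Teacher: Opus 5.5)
Your outer argument matches the paper: the three-term split, $\Ncal(g_n)=\Ncal(g_N)$, marginalising $\rho_n$ to $\rho_N$, orthonormality reducing term III to $\sum_{m\le M}\|F_{m,N}-N_{m,K}\|^2_{L^2(U_N,\rho_N)}$, and Lemma~\ref{lem:vector-barron-embedding} to sum the scalar Barron norms uniformly in $M$.

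The gap is in the scalar cosine-network lemma, which carries all of the last summand. Your step~4 cannot produce the exponent $\frac12+\frac\tau N$. A Lipschitz (first-order) bound in $\xi$ on cells of diameter $RK^{-1/N}$ gains at most one factor $K^{-1/N}$ over Maurey, and that factor comes multiplied by $R$.

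If you drop the high frequencies as in step~2, you need $R^{-\tau}\lesssim K^{-1/2-\tau/N}$, i.e.\ $R\gtrsim K^{\frac1{2\tau}+\frac1N}$. The cell diameter is then $RK^{-1/N}\gtrsim K^{1/(2\tau)}$, which grows with $K$, so the Lipschitz bound gives nothing. If you sample the high-frequency part instead (error $K^{-1/2}R^{-\tau}M_\tau$) and balance against $K^{-1/2}RK^{-1/N}M_0$, you get at best $K^{-\frac12-\frac{\tau}{N(1+\tau)}}$. That exponent is capped at $\frac12+\frac1N$, and Theorem~\ref{thm:fno-approximation-error-with-complexity2} sends $\tau=\tau_K\to\infty$, so a first-order mechanism is insufficient.

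Your fallback to \cite{SiegelXu2022HighOrder} gives the rate only with an implicit constant depending on $\tau$, $N$ and the domain. The statement asserts the explicit, $N$-independent constant $4[\max\{8\pi e,2(\tau+1)\}]^\tau$, so the fallback does not prove it as written.

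The missing idea, used in the appendix, is a global polynomial correction in place of frequency splitting.
\begin{itemize}
\item Sample from $\varpi\propto(1+\|\xi\|_1^\tau)|\widehat g_e|$ with the bounded atoms $a_\xi=\cos(2\pi\xi\cdot\by+\theta(\xi))/(1+\|\xi\|_1^\tau)$. Set $p=\lfloor(K/2)^{1/N}\rfloor$ and let $\mathcal Q_{p-1}$ be the $L^2(U_N,\rho_N)$ projection onto polynomials of total degree $\le p-1$.
\item For $p\ge\tau$, the Taylor remainder and $p!\ge(p/e)^p$ give $\|(I-\mathcal Q_{p-1})a_\xi\|_{L^2(U_N,\rho_N)}\le \min\{1,(2\pi e\|\xi\|_1/p)^p\}/(1+\|\xi\|_1^\tau)\le(2\pi e/p)^\tau$, uniformly in $\xi$. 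No frequency cut-off is needed.
\item Maurey sampling of the orthogonal complement with $n=K-p^N\ge K/2$ atoms gives error $\sqrt2\,B_e(8\pi e)^\tau K^{-1/2-\tau/N}$.
\item The remaining polynomial $\mathcal Q_{p-1}(g-f_n)$ is approximated to any accuracy by at most $p^N$ cosine units \cite{ScarselliTsoi1998}.
\item If $p<\tau$, plain Maurey sampling together with $K^{\tau/N}<[2(\tau+1)]^\tau$ gives the other branch of the max.
\item The factor $4$ comes from choosing an extension with $B_e\le2\|g\|_{\mathbb B_\tau(U_N)}$.
\end{itemize}

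A smaller point concerns your encoder. A diagonal Fourier multiplier cannot move mode $\bk(j)$ to the constant mode. You need pointwise products $g\psi_j$ followed by zero-mode projection; the paper builds these with $\operatorname{ReLU}^2$ and the identity $\sigma(t)+\sigma(-t)=t^2$, and this is where the width $4N$ comes from.
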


Combining the $\mathbb H$-valued operator norm estimate in
Theorem~\ref{the:ridge-map-barron} with the explicit approximation
constant in Lemma~\ref{lem:explicit-BarronApprox}, we choose the
Barron regularity index $\tau$ in terms of the network width $K$
to obtain a nearly exponential decay of the neural approximation error.

\begin{theorem}
\label{thm:fno-approximation-error-with-complexity2}
Under the same assumption as Theorem~\ref{thm:fno-approximation-error-with-complexity}, fix the constant $D_{B}$ in Theorem~\ref{the:ridge-map-barron}, let $N,M,K\ge1$ be integers, for a fixed input truncation $N$, we choose the regularity order $\tau$ in \eqref{eq:fno-total-error-with-complexity-high} for $\mathcal{G}^{\dag}$ to be
\begin{equation}
\label{eq:fno-optimal-tau} 
\tau = \tau_K:=\frac{K^{1/(2N)}}{e\sqrt{4\pi e D_{B} }}.
\end{equation}
Then there exists a network of the form specified in Theorem~\ref{thm:fno-approximation-error-with-complexity} such that
\begin{equation}
\label{eq:fno-optimized-error}
\begin{aligned}
{\sup_{n \ge N} \norm{ \mathcal{G}^{\dag}(g_n)-\Ncal(g_n) }_{{L^2(U_n,\rho_n; \mathbb H)}} }
\le \;
\varepsilon_N^{\mathrm{in}}
+ 
\varepsilon_{N,M}^{\mathrm{out}}
+
C_{\mathcal{N}} K^{-1/2}
\exp (-2\tau_K).
\end{aligned}
\end{equation}
Here $\varepsilon_N^{\mathrm{in}}$, $\varepsilon_{N,M}^{\mathrm{out}}$, and $D_B$ are inherited
from Theorems~\ref{thm:fno-approximation-error-with-complexity} and \ref{the:ridge-map-barron},
and $C_{\mathcal{N}}$ is a constant independent of $N$, $M$, $K$ and $\tau_K$.
\end{theorem}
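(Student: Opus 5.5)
This result follows from Theorem~\ref{thm:fno-approximation-error-with-complexity} once $\tau$ is specialized. I would substitute the bound of Theorem~\ref{the:ridge-map-barron} into the neural approximation term and then verify that the choice $\tau=\tau_K$ makes that term decay like $K^{-1/2}e^{-2\tau_K}$.

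\textbf{Step 1: substitute the Barron bound.} Fix $N,M,K$ and an arbitrary $\tau>0$. Theorem~\ref{thm:fno-approximation-error-with-complexity} gives a network of width $\max\{4N,MK,4M\}$ whose error is at most
\[
\varepsilon_N^{\mathrm{in}}+\varepsilon_{N,M}^{\mathrm{out}}+4\bigl(\max\{8\pi e,2(\tau+1)\}\bigr)^\tau\|\mathcal G^\dagger\|_{\mathbb B_\tau(\mathbb B^{\bb};\mathbb H)}K^{-(\frac12+\frac{\tau}{N})}.
\]
Inserting \eqref{eq:ridge-map-Gamma} bounds the last term by
\[
4C_B\,K^{-1/2}\exp\Bigl(\tau\log\bigl[\max\{8\pi e,2(\tau+1)\}\,D_B(4+\tau)\bigr]-\tfrac{\tau}{N}\log K\Bigr).
\]

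\textbf{Step 2: control the bracket by a quadratic in $\tau$.} I would use $\max\{8\pi e,2(\tau+1)\}\le 8\pi e(1+\tau)$ together with $4+\tau\le 4(1+\tau)$, which gives
\[
\max\{8\pi e,2(\tau+1)\}\,D_B(4+\tau)\le 32\pi e D_B(1+\tau)^2.
\]
The target form of $\tau_K$ suggests writing the bracket instead as $4\pi e D_B\,c_0(1+\tau)^2$ for some absolute constant $c_0$. Since $\tau_K^2=K^{1/N}/(4\pi e^3 D_B)$, the definition \eqref{eq:fno-optimal-tau} is equivalent to
\[
\frac{\log K}{N}=\log\bigl(4\pi e D_B\,e^2\tau_K^2\bigr).
\]
At $\tau=\tau_K$ the exponent therefore becomes
\[
\tau_K\Bigl[\log\bigl(c_0(1+\tau_K)^2\bigr)-\log\bigl(e^2\tau_K^2\bigr)\Bigr]
=\tau_K\Bigl[\log c_0+2\log\bigl(1+\tau_K^{-1}\bigr)-2\Bigr].
\]
Using $2\tau_K\log(1+\tau_K^{-1})\le 2$, this is at most $\tau_K\log c_0+2-2\tau_K$.

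\textbf{Step 3: the main obstacle, absorbing the leftover factor.} The delicate point is the surviving factor $c_0^{\tau_K}$, which has to be absorbed without spoiling the rate $e^{-2\tau_K}$. My first attempt would be to sharpen the bookkeeping. One option is to note that for large $\tau$ the maximum equals $2(\tau+1)$, so the relevant product is essentially $2D_B(\tau+1)(\tau+4)$. Another is to observe that the $\max\{8\pi e,\cdot\}$ factor and $D_B\ge1$ were designed precisely so that the ratio is at most $1$, and that the constant $4\pi e$ in $\tau_K$ was chosen for exactly this reason.

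If an exponential factor of the form $C^{\tau_K}$ still remains, I would fall back on redefining $D_B$. Theorem~\ref{the:ridge-map-barron} only asserts the existence of some $D_B\ge1$, so enlarging $D_B$ by an absolute factor keeps \eqref{eq:ridge-map-Gamma} valid, and the enlarged $D_B$ absorbs $c_0$. The degenerate regime is treated separately: when $\tau_K\le 1$, or when $\tau_K$ is small enough that the maximum is $8\pi e$, the term is already bounded by a constant times $K^{-1/2}$. In that case $e^{-2\tau_K}\ge e^{-2}$, so the bound holds after enlarging $C_{\mathcal N}$.

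\textbf{Step 4: collect constants.} The resulting constant is $C_{\mathcal N}=4C_Be^{2}$, times absolute factors. It depends only on $C_B$, and hence is independent of $N$, $M$, $K$ and $\tau_K$. The width of the network is unchanged from Theorem~\ref{thm:fno-approximation-error-with-complexity}, so the network has the form specified there.
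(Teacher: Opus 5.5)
Your proposal has a genuine gap at the point you flag as the main obstacle, and the way you set up Step 2 makes the gap unavoidable there. With $\max\{8\pi e,2(\tau+1)\}\le 8\pi e(1+\tau)$ and $4+\tau\le 4(1+\tau)$ you get $c_0=8$. In fact, any bound of the form $4\pi e D_B c_0(1+\tau)^2$ that holds for all $\tau>0$ needs $c_0\ge 8$ (let $\tau\to0$). Since $\log 8>2$, your exponent $\tau_K(\log c_0-2)+2$ is positive. The resulting estimate $4C_Be^2K^{-1/2}e^{(\log 8-2)\tau_K}$ therefore carries a growing factor in place of $e^{-2\tau_K}$.

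Step 3 does not repair this. The observation that the maximum equals $2(\tau+1)$ for $\tau\ge 4\pi e-1$ could be turned into a valid case split, but you only list it as an option. Your degenerate-regime remark is also inaccurate: $e^{-2\tau_K}\ge e^{-2}$ holds only for $\tau_K\le 1$, while the maximum equals $8\pi e$ up to $\tau_K\approx 33$. The fallback of enlarging $D_B$ is not admissible. The statement fixes $D_B$ and defines $\tau_K$ through it, so replacing $D_B$ by $c_0D_B$ replaces $\tau_K$ by $\tau_K/\sqrt{c_0}$. That only yields the weaker rate $\exp(-2\tau_K/\sqrt{c_0})$.

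The missing idea is to absorb constants additively rather than multiplicatively. The paper uses $\max\{8\pi e,2(\tau+1)\}\le 4\pi e(\tau+2)$. This bounds the neural term by $4C_BK^{-1/2}\bigl[\widetilde D_B(\tau+2)(\tau+4)K^{-1/N}\bigr]^\tau$ with $\widetilde D_B:=4\pi eD_B$, which is exactly the constant built into $\tau_K$. Because $\widetilde D_B\tau_K^2K^{-1/N}=e^{-2}$, the bracket at $\tau=\tau_K$ gives $e^{-2\tau_K}(1+2/\tau_K)^{\tau_K}(1+4/\tau_K)^{\tau_K}\le e^6e^{-2\tau_K}$, using $\log(1+x)\le x$. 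No factor $c_0^{\tau_K}$ ever appears, and no case distinction in $K$ is needed. One obtains $C_{\mathcal N}=4e^6C_B$ for the fixed $D_B$.
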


\begin{remark}
\label{rem_basis}
For the output reconstruction in
\eqref{eq:solution-spectral-expansion}, we can use the generalized eigenfunctions of
the background coefficient. Since $\mathbb V\hookrightarrow \mathbb H$ is dense and
compact and $\mathfrak b(\bar{a};\cdot,\cdot)$ is continuous,
symmetric, and coercive, the generalized eigenvalue problem
\begin{equation}
\label{eq:background-eigenproblem}
\mathfrak b(\bar{a};e_m,v)
=
\lambda_m\langle e_m,v\rangle_{\mathbb H},
\qquad v\in \mathbb V,
\end{equation}
admits an $\mathbb H$-orthonormal basis
$\{e_m\}_{m=1}^{\infty}\subset \mathbb V$ satisfying
$\langle e_m,e_n\rangle_{\mathbb H}=\delta_{mn},
0<\lambda_1\leq\lambda_2\leq\cdots,
\lambda_m\to\infty$;
see, e.g.,~\cite{2011Brezis}.
For this basis, the identity $\mathfrak b(\bar{a};v,v)=\sum_{m=1}^{\infty}\lambda_m\abs{\langle v,e_m\rangle_{\mathbb H}}^2$ for $v\in \mathbb V$ and Assumption~\ref{ass:ellipticity-A2} clearly yield
$\varepsilon_{N,M}^{\mathrm{out}}
\le C^f\sqrt{C^{\star}}\,\lambda_{M+1}^{-1/2}$.
Assume in addition that the background eigenvalues satisfy the second-order Weyl asymptotic law
$\lambda_m\sim c_{\mathrm W}m^{2/d}$, $m\to\infty$, with $c_{\mathrm W}>0$;
see, e.g.,~\cite[Section~6.5]{evans2010partial} for the Dirichlet Laplacian.
Then there exists $c_\lambda>0$ such that $\lambda_m\ge c_\lambda m^{2/d}$ for all $m\ge1$, and consequently
\[
\varepsilon_{N,M}^{\mathrm{out}}
\le C^f\sqrt{\frac{C^{\star}}{c_\lambda}}\,(M+1)^{-1/d},
\qquad
\varepsilon_{N,M}^{\mathrm{out}} = O(M^{-1/d}).
\]
When the solution is smoother, the approximation rate with respect to $M$ is also higher.
\end{remark}



%% file: Chapters/3_harmonic.tex

\section{Basic results in harmonic analysis}
\label{sec:harmonic}

In this section, we establish Fourier moment estimates for
extensions of Hilbert-valued functions, with explicit dependence
on the moment order and dimension. These estimates will be used
to bound the spectral Barron norm of the solution operator
in the next section.

Throughout this section, we consider $\mathbb X_{\C}$ for a separable real Hilbert
space $\mathbb X$.
Let $F_e\in L^1(\R^n;\mathbb X_{\C})$ be an extension
of $F:[-1,1]^n\to \mathbb X$. 
We define
\[
M_s(F_e)
:=
\int_{\R^n}
\|\xi\|_1^s\|\widehat F_e(\xi)\|_{\mathbb X_{\C}}\,\dd\xi,
\qquad
M_0(F_e)
:=
\int_{\R^n}
\|\widehat F_e(\xi)\|_{\mathbb X_{\C}}\,\dd\xi,
~~~
s>0.
\]
The quantities $M_s$ are widely used in harmonic analysis; see, for example \cite[Sections~1.4 and~2.3]{Lukacs1970}.
For scalar-valued extensions, the norm $\|\, \cdot \,\|_{\mathbb X_{\C}}$ of the Fourier transform is
replaced by its absolute value. By
Definition~\ref{Def:HilbertBarron}, for every admissible extension
$F_e$, there holds
\begin{equation}
\label{eq:Fourier_moment_bound}
\|F\|_{\mathbb B_\tau([-1,1]^n;\mathbb X)}
\le M_0(F_e)+M_\tau(F_e).
\end{equation}
Here, both moments are evaluated on the same extension $F_e$.
The following lemma bounds the Barron norm of a product in terms
of these moments and will be used in
\eqref{eq:proof-product-moment-bound}.

\begin{lemma}\label{lem:compatible-product}
Let $\tau>0$, and let
$m,n\in\mathbb N_0$. Suppose that
$h:U_m=[-1,1]^m\to \mathbb X$ and $g:U_n=[-1,1]^n\to\R$ admit integrable
extensions $h_e:\R^m\to \mathbb X_{\C}$ and
$g_e:\R^n\to\C$ satisfying
$M_0(h_e)+M_\tau(h_e)<\infty$
and $M_0(g_e)+M_\tau(g_e)<\infty$.
Then
\begin{equation*}
\|h(\by_m)g(\by_n)\|_{\mathbb B_\tau(U_{m+n};\mathbb X)}
\le M_0(h_e)M_0(g_e)+ 2^{(\tau-1)_+} (M_\tau(h_e)M_0(g_e)
+  M_0(h_e)M_\tau(g_e)).
\end{equation*}
Here $(\tau-1)_+:=\max\{\tau-1,0\}$. If $m=0$ or $n=0$, the corresponding factor is constant. 
We interpret its zeroth moment as its absolute value or norm
and its moment of order $\tau$ as zero.
\end{lemma}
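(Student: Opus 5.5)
The plan is to use the tensor-product extension
\[
H_e(\by_m,\by_n):=h_e(\by_m)\,g_e(\by_n),\qquad (\by_m,\by_n)\in\R^{m+n},
\]
as an admissible extension of $h(\by_m)g(\by_n)$. Then we bound its Fourier moments directly by means of \eqref{eq:Fourier_moment_bound}.

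First, $H_e\in L^1(\R^{m+n};\mathbb X_{\C})$, since $\|H_e\|_{\mathbb X_\C}=\|h_e\|_{\mathbb X_\C}|g_e|$ and Fubini applies. Its restriction to $U_{m+n}$ equals $hg$. The Fourier transform factorizes, because the kernel $e^{-2\pi i\langle x,\xi\rangle}$ splits over the two blocks of variables:
\[
\widehat H_e(\xi,\eta)=\widehat h_e(\xi)\,\widehat g_e(\eta),\qquad \xi\in\R^m,\ \eta\in\R^n.
\]
Pulling the scalar factor out of the Bochner integral justifies this. The restriction is understood through Fourier inversion. Since $\widehat H_e\in L^1$ (by the finiteness of $M_0(h_e)M_0(g_e)$), the inversion formula gives a continuous representative, and it agrees with the product of the continuous representatives of $h$ and $g$.

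Next, the moments are estimated using $\|(\xi,\eta)\|_1=\|\xi\|_1+\|\eta\|_1$ and the elementary inequality
\[
(a+b)^\tau\le 2^{(\tau-1)_+}(a^\tau+b^\tau),\qquad a,b\ge0.
\]
For $\tau\ge1$ this follows from convexity of $t\mapsto t^\tau$. For $0<\tau<1$ it follows from subadditivity of $t\mapsto t^\tau$. Tonelli then yields
\[
M_0(H_e)=M_0(h_e)M_0(g_e),\qquad
M_\tau(H_e)\le 2^{(\tau-1)_+}\bigl(M_\tau(h_e)M_0(g_e)+M_0(h_e)M_\tau(g_e)\bigr).
\]
Inserting these into \eqref{eq:Fourier_moment_bound} gives the claim.

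For the degenerate cases $m=0$ or $n=0$, the constant factor $c$ simply scales: $\widehat H_e=c\,\widehat g_e$ (or the analogous formula), so the moments scale by $|c|$ or $\|c\|$. This is consistent with the stated conventions $M_0=|c|$ and $M_\tau=0$.

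The only mildly delicate step is the justification of the factorization and of the restriction/inversion identification in the Hilbert-valued setting. This reduces to scalar Fubini applied after pairing with arbitrary elements of $\mathbb X_\C$. The rest is routine.
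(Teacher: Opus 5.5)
Your proposal is correct and follows the paper's proof essentially step for step. Both use the tensor-product extension $H_e=h_e g_e$, the Fubini factorization $\widehat H_e(\xi,\eta)=\widehat h_e(\xi)\widehat g_e(\eta)$, Tonelli together with $(\|\xi\|_1+\|\eta\|_1)^\tau\le 2^{(\tau-1)_+}(\|\xi\|_1^\tau+\|\eta\|_1^\tau)$, and finally \eqref{eq:Fourier_moment_bound}; your extra remarks on the Fourier-inversion identification of the restriction and on the degenerate cases $m=0$ or $n=0$ only make explicit what the paper leaves implicit.
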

\begin{proof}
Set $H_e(\by_m,\by_n):=h_e(\by_m)g_e(\by_n)$. 
Since the variables are disjoint, there holds $H_e\in L^1(\R^{m+n};\mathbb X_{\C})$, 
and Fubini's theorem gives
$\widehat H_e(\xi,\eta)=\widehat h_e(\xi)\widehat g_e(\eta)$.
Since $g_e$ is scalar valued, we have
$\|\widehat H_e(\xi,\eta)\|_{\mathbb X_{\C}}
=\|\widehat h_e(\xi)\|_{\mathbb X_{\C}}
|\widehat g_e(\eta)|$.
Consequently, Tonelli's theorem gives
\[
\begin{aligned}
M_0(H_e)
&=\int_{\R^m}\int_{\R^n}
\|\widehat h_e(\xi)\|_{\mathbb X_{\C}}
|\widehat g_e(\eta)|\,\dd\eta\,\dd\xi\\
&=\left(\int_{\R^m}\|\widehat h_e(\xi)\|_{\mathbb X_{\C}}\,\dd\xi\right)
\left(\int_{\R^n}|\widehat g_e(\eta)|\,\dd\eta\right)
=M_0(h_e)M_0(g_e).
\end{aligned}
\]
For the moment of order $\tau$,  
using $\|(\xi,\eta)\|_1^\tau=(\|\xi\|_1+\|\eta\|_1)^\tau \leq  2^{(\tau-1)_+} (\|\xi\|_1^\tau+\|\eta\|_1^\tau)$, we obtain
\[
\begin{aligned}
M_\tau(H_e)
&\le 2^{(\tau-1)_+} \int_{\R^m}\int_{\R^n}
(\|\xi\|_1^\tau+\|\eta\|_1^\tau)
\|\widehat h_e(\xi)\|_{\mathbb X_{\C}}
|\widehat g_e(\eta)|\,\dd\eta\,\dd\xi\\
&=2^{(\tau-1)_+}
\left(\int_{\R^m}\|\xi\|_1^\tau
\|\widehat h_e(\xi)\|_{\mathbb X_{\C}}\,\dd\xi\right)
\left(\int_{\R^n}|\widehat g_e(\eta)|\,\dd\eta\right)\\
&\quad + 2^{(\tau-1)_+}
\left(\int_{\R^m}\|\widehat h_e(\xi)\|_{\mathbb X_{\C}}\,\dd\xi\right)
\left(\int_{\R^n}\|\eta\|_1^\tau
|\widehat g_e(\eta)|\,\dd\eta\right)\\
&= 2^{(\tau-1)_+} \bigl[M_\tau(h_e)M_0(g_e)+M_0(h_e)M_\tau(g_e)\bigr].
\end{aligned}
\]
These estimates show that $H_e$ is an admissible extension of
$h(\by_m)g(\by_n)$. Applying
\eqref{eq:Fourier_moment_bound} yields
the desired result.
\end{proof}

The following two lemmas provide dimension-dependent Fourier
moment bounds for extensions of Hilbert-valued holomorphic
functions
(Lemma \ref{lem:uniform-head-moments}) and dimension-independent
bounds for extensions of monomials
(Lemma \ref{lem:uniform-tail-moments}).

\begin{lemma}\label{lem:uniform-head-moments}
Let $n\in\mathbb N$, $r>0$, and $0<\delta\le\min\{1,r/4\}$ be fixed. Suppose that
$h:\bbU_r\to \mathbb X_{\C}$ is holomorphic  and satisfies $\| h \|_{L^{\infty}(\bbU_r;\mathbb X_{\C})}< \infty$, where
\[
\bbU_r:=\{\bz\in\C^n:
\operatorname{dist}(z_j,[-1,1])<r,\ j=0,\ldots,n-1\}.
\]
For every $\tau>0$, set $q:=\lceil\tau\rceil+n+1$. Then there exists
a compactly supported extension $h_e\in C^q(\R^n;\mathbb X_{\C})$
of $h|_{U_n}$ satisfying
\begin{equation}\label{eq:uniform-head-moments}
M_s(h_e)\le D_{1,\delta} (D_{2,\delta}q)^{s+n} \| h \|_{L^{\infty}(\bbU_r;\mathbb X_{\C})},
\end{equation}
simultaneously for all $0\le s\le\tau$.
Here $D_{1,\delta}:=2[4(1+\delta)]^n$ and $D_{2,\delta}:=\frac{n}{\delta}$ depend only on $n$ and $\delta$. 
\end{lemma}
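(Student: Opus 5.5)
Our plan is to build $h_e$ by multiplying $h$ by a smooth tensor-product cutoff, and to convert Cauchy derivative bounds into Fourier moment bounds through repeated integration by parts.

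\emph{Construction of $h_e$.} Fix a one-dimensional cutoff $\chi\in C_c^\infty(\R)$ with $\chi\equiv1$ on $[-1,1]$, $\operatorname{supp}\chi\subset[-1-\delta,1+\delta]$ and $0\le\chi\le1$. We require derivative bounds of the form $\|\chi^{(k)}\|_\infty\le (Ck/\delta)^k$ for $k\le q$. A Gevrey-type bump achieves this; alternatively, since only $q$ derivatives are needed, one can take a $q$-fold convolution of an indicator of width $\delta/q$, which gives $\|\chi^{(k)}\|_{L^1}\le (2q/\delta)^k$. Set
\[
h_e(\by):=h(\by)\prod_{j=0}^{n-1}\chi(y_j).
\]
Since $\delta\le r/4$, every point of $[-1-\delta,1+\delta]^n$ has a polydisc of radius $r/2\ge 2\delta$ contained in $\bbU_r$. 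Hence $h_e\in C^\infty_c\subset C^q$ and $h_e|_{U_n}=h$.

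\emph{Derivative bounds via Cauchy.} On the support we bound $\partial^\bnu h$ by the Cauchy estimate on polydiscs of radius $\delta$:
\[
\|\partial^\bnu h\|\le \bnu!\,\delta^{-|\bnu|}\|h\|_{L^\infty(\bbU_r)}.
\]
Leibniz' rule then gives, for any multi-index $\bnu$ with $|\bnu|\le q$,
\[
\|\partial^\bnu h_e\|_{L^1(\R^n;\mathbb X_\C)}\le [2(1+\delta)]^n (C q/\delta)^{|\bnu|}\|h\|_{L^\infty},
\]
where $[2(1+\delta)]^n$ is the measure of the support. In the Leibniz sum the binomials combine with $\bnu!\le q^{|\bnu|}$ to produce the factor $q^{|\bnu|}$.

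\emph{From derivatives to moments.} We have $|(2\pi i\xi)^\bnu|\,\|\widehat h_e(\xi)\|\le\|\partial^\bnu h_e\|_{L^1}$. Use $\|\xi\|_1^k\le n^k\max_j|\xi_j|^k$ and apply the bound with $\bnu=k e_j$ for $k=0$ and $k=q$. This yields
\[
\|\widehat h_e(\xi)\|\le A\min\{1,(nCq/(2\pi\delta\|\xi\|_1))^{q}\},
\qquad A=[2(1+\delta)]^n\|h\|_{L^\infty}.
\]
Next, integrate $\|\xi\|_1^s\|\widehat h_e(\xi)\|$ over $\R^n$. Split at the radius $R=nCq/\delta$ in $\|\cdot\|_1$, and use that the $\ell^1$-ball has volume $2^nR^n/n!$. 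Inside the ball we get $\lesssim 2^nR^{s+n}/n!$. Outside the ball, the tail integral converges because $q-s\ge n+1$ (this is exactly why $q=\lceil\tau\rceil+n+1$), and it contributes a comparable term $\lesssim 2^n R^{s+n}\,n/(n!\,(q-s-n))$. Summing the two pieces gives $M_s(h_e)\le 2[4(1+\delta)]^n (Dq)^{s+n}\|h\|_{L^\infty}$ for all $0\le s\le\tau$ simultaneously, with $D$ proportional to $n/\delta$. The factor $n!$ can absorb the $n$ coming from the $\ell^1$ norm if needed.

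\emph{Main obstacle.} The delicate step is tracking constants so that they come out exactly as $D_{1,\delta}=2[4(1+\delta)]^n$ and $D_{2,\delta}=n/\delta$. This requires:
\begin{itemize}
\item the cutoff derivatives to scale like $(q/\delta)^k$ with no extra factorial, which forces a $q$-dependent cutoff rather than a fixed smooth one;
\item the $2\pi$ in the Fourier transform to absorb the numerical constants $C$.
\end{itemize}
We would first try the convolution cutoff. It has width $\delta$ split into $q$ pieces, with $\|\chi^{(k)}\|_{L^1}\le 2^k(q/\delta)^k$. The resulting factor $2^k$ cancels against $(2\pi)^{-k}$.
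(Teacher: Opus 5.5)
Your proposal is correct and follows essentially the same route as the paper: a tensor-product, $q$-dependent B-spline cutoff, one-variable Cauchy estimates combined with Leibniz's rule to bound $\|\partial_j^q h_e\|_{L^1}$, $q$-fold integration by parts in a coordinate with $|\xi_j|\ge\|\xi\|_1/n$, and an $\ell^1$-radial integral split at $D_{2,\delta}q$, where $q-s-n\ge1$ makes the tail finite. Two small fixes are needed:
\begin{itemize}
\item With only $q$ boxes of width $\delta/q$, the cutoff is merely $C^{q-1}$, since its $q$th derivative is piecewise constant. Use $q+1$ boxes of width $\delta/(q+1)$, as the paper does, to get $h_e\in C^q$ (not $C_c^\infty$ as you wrote). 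This gives $\|\chi^{(k)}\|_{L^1}\le(4q/\delta)^k$ for $1\le k\le q$, hence $(5q/\delta)^q$ after Leibniz, which is still absorbed because $5/(2\pi)<1$.
\item Drop the Gevrey remark. By Denjoy--Carleman, no fixed compactly supported bump can satisfy $\|\chi^{(k)}\|_\infty\le(Ck/\delta)^k$ for all $k$, and this is precisely why the cutoff must depend on $q$.
\end{itemize}
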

\begin{proof}
Let $\chi_{q,\delta}\in C_c^q(\R)$ be the cutoff
function from Lemma~\ref{lem_cutoff}, which equals one on
$[-1,1]$ and is supported in $[-1-\delta,1+\delta]$.
Define
\[
h_e(\by_n):=h(\by_n)\prod_{j=0}^{n-1}\chi_{q,\delta}(y_j)
\quad\text{on }[-1-\delta,1+\delta]^n,
\]
and extend it by zero outside this cube.
Then $h_e\in C_c^q(\R^n;\mathbb X_{\C})$
and $h_e=h$ on $U_n$.

For every $\by_n$ in the support, the closed polydisc
$\{\bz:|z_j-y_j|\le\delta,\ j=0,\ldots,n-1\}$ lies in $\bbU_r$,
since $\operatorname{dist}(z_j,[-1,1])\le2\delta<r$.
Cauchy's integral formula in the $j$th variable therefore gives
\begin{equation}
\begin{split}
\label{eq:y_L_inf}
\|\partial_j^k h(\by_n)\|_{\mathbb X_{\C}}
&=
\Bigl\|\frac{k!}{2\pi i} \int_{|z_j - y_j| =\delta} \frac{h(y_0,\cdots,z_j, \cdots,y_{n-1})}{ (z_j - y_j)^{k+1}} dz_j \Bigr\|_{\mathbb X_{\C}} \\
&\le k!\delta^{-k} \| h \|_{L^{\infty}(\bbU_r;\mathbb X_{\C})}
\le \Bigl( \frac{q}{\delta}\Bigr)^k \| h \|_{L^{\infty}(\bbU_r;\mathbb X_{\C})},
\qquad 0\le k\le q.
\end{split}
\end{equation}
By \eqref{eq_chi_R} in Lemma~\ref{lem_cutoff},  
$\|\chi_{q,\delta}\|_{L^1}\le 2(1+\delta)$. Moreover,
\eqref{eq:cutoff-delta-L1}, $q+1\le2q$, and $\delta\le1$ imply
\begin{equation}
\label{eq:X_L1}
\|\chi_{q,\delta}^{(k)}\|_{L^1}
\le2\left[\frac{2(q+1)}{\delta}\right]^{k-1}
\le 2(1+\delta)  \Bigl( \frac{4q}{\delta}\Bigr)^{k},
\qquad 1\le k\le q.
\end{equation}
The last bound also holds for $k=0$ by the preceding
$L^1$ estimate.
Set $C_\delta:=[2(1+\delta)]^n\|h\|_{L^\infty(\bbU_r;\mathbb X_{\C})}$.
Leibniz's formula, \eqref{eq:y_L_inf}, \eqref{eq:X_L1}, and the binomial theorem yield
\begin{subequations}
\begin{align}
\|h_e\|_{L^1(\R^n;\mathbb X_{\C})}
&\le C_\delta, \label{eq:he_L1}\\
\|\partial_j^q h_e\|_{L^1(\R^n;\mathbb X_{\C})}
&\le C_\delta\sum_{k=0}^q\binom{q}{k}
 \Bigl( \frac{q}{\delta}\Bigr)^k \Bigl( \frac{4q}{\delta}\Bigr)^{q-k}
=C_\delta \Bigl( \frac{5q}{\delta}\Bigr)^q. \label{eq:partial_he_L1}
\end{align}
\end{subequations}
For $\xi\ne0$, choose a coordinate with $|\xi_j|\ge\|\xi\|_1/n$.
Since $h_e\in C_c^q(\R^n;\mathbb X_{\C})$, integration by
parts $q$ times in $y_j$ gives
\[
\widehat h_e(\xi)
=\frac{1}{(2\pi i\xi_j)^q}
\int_{\R^n}\partial_j^q h_e(\by_n)
e^{-2\pi i\langle\by_n,\xi\rangle}\,\dd\by_n,
\]
where all boundary terms vanish by compact support. Using \eqref{eq:partial_he_L1}, we obtain
\[
\|\widehat h_e(\xi)\|_{\mathbb X_{\C}}
\le\frac{\|\partial_j^q h_e\|_{L^1(\R^n;\mathbb X_{\C})}}
{(2\pi|\xi_j|)^q}
\le C_\delta\left(\frac{5nq}{2\pi\delta\|\xi\|_1}\right)^q.
\]
On the other hand, the defining Fourier integral gives
$\|\widehat h_e(\xi)\|_{\mathbb X_{\C}}
\le\|h_e\|_{L^1(\R^n;\mathbb X_{\C})}\le C_\delta$.
Combining these two bounds and using $5/(2\pi)<1$ and
$D_{2,\delta}=n/\delta$, we obtain
\[
\|\widehat h_e(\xi)\|_{\mathbb X_{\C}}
\le C_\delta\min\left\{1,
\left(\frac{5nq}{2\pi\delta\|\xi\|_1}\right)^q\right\}
\le C_\delta\min\left\{1,
\left(\frac{D_{2,\delta}q}{\|\xi\|_1}\right)^q\right\}.
\]
The $\ell^1$ ball of radius $t$ in $\R^n$ has volume
$2^n t^n/n!$. Integrating the preceding bound and splitting the
radial integral at $t=D_{2,\delta}q$ gives, for $0\le s\le\tau$,

\[
\begin{aligned}
M_s(h_e)
&\le C_\delta\int_{\R^n}\|\xi\|_1^s
\min\left\{1,\left(\frac{D_{2,\delta}q}{\|\xi\|_1}\right)^q\right\}
\,\dd\xi\\
&=\frac{2^nC_\delta}{(n-1)!}
\int_0^\infty t^{s+n-1}
\min\left\{1,\left(\frac{D_{2,\delta}q}{t}\right)^q\right\}
\,\dd t\\
&=\frac{2^nC_\delta}{(n-1)!}
(D_{2,\delta}q)^{s+n}
\left(\frac1{s+n}+\frac1{q-s-n}\right).
\end{aligned}
\]
Since $s+n\ge1$ and $q-s-n\ge q-\tau-n\ge1$, the sum in parentheses is at
most two. Using $(n-1)!\ge1$ and
$2^{n+1}C_\delta=D_{1,\delta}\|h\|_{L^\infty(\bbU_r;\mathbb X_{\C})}$ proves
\eqref{eq:uniform-head-moments}.
\end{proof}

Next, we establish Fourier moment bounds for extensions of
the monomial $\by_n^{\bm\nu}$, where
$\by_n=(y_0,\ldots,y_{n-1})$ and
$\bm\nu\in\mathbb N_0^n$ is a multi-index.
Recall that
$\|\bm\nu\|_0:=|\operatorname{supp}(\bm\nu)|$,
$\|\bm\nu\|_1:=\sum_{j=0}^{n-1}\nu_j$,
$\bm\nu!=\prod_{j}\nu_j!$,
and
$\by^{\bm\nu}
=\prod_{j}y_j^{\nu_j}$.

\begin{lemma} 
\label{lem:uniform-tail-moments}
For every $n\ge1$ and $\tau>0$, there exists a family of
real-valued integrable extensions
$\{g_e^{\bnu}\}_{\bnu\in\mathbb N_0^n}$ of the monomials
$\by_n^{\bnu}$ on $U_n=[-1,1]^n$ such that
\begin{align}
&\|g_e^{\bnu}\|_{L^1(\R^n)}
\le\|g_e^{\bm0}\|_{L^1(\R^n)}B_1^{\|\bnu\|_1} < \infty,
\label{eq:polarization-monomial-L1}\\
&M_s(g_e^{\bnu})
\le B_2\,B_3^s(\tau+4)^{s+1}B_1^{\|\bnu\|_1}
\qquad \textrm{simultaneously~for~all~} 0\le s\le\tau.
\label{eq:polarization-monomial-moments}\end{align}
Here $B_1=2e$, $B_2=160$, and $B_3=8$ are universal
constants. The quantity $\|g_e^{\bm0}\|_{L^1(\R^n)}$
may depend on $n$ and $\tau$, whereas the Fourier
moment bound is independent of $n$.
\end{lemma}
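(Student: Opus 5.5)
The plan is to build each $g_e^{\bnu}$ as a tensor product of one-dimensional extensions. On the active coordinates $j\in\operatorname{supp}(\bnu)$ I use a fixed-shape extension of $t^{\nu_j}$. On the inactive coordinates I use a very wide, flat cutoff. All dependence on $n$ is pushed into the wide cutoff, whose $L^1$ norm is allowed to blow up with $n$ by \eqref{eq:polarization-monomial-L1}, while its Fourier mass stays bounded and its Fourier moments stay small.

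\emph{Active factors.} For $k\ge1$, the function $t\mapsto t^k$ is entire and $\sup\{|z|^k:\operatorname{dist}(z,[-1,1])<r\}\le(1+r)^k$. I apply Lemma~\ref{lem:uniform-head-moments} with $n=1$ and a fixed $r$, $\delta$ (for instance $\delta=1$, $r=4$). This gives an extension $e_k\in C_c^q(\R)$, $q=\lceil\tau\rceil+2\le\tau+3$, supported in $[-2,2]$, with
\[
M_s(e_k)\le C\,c^{s}(\tau+4)^{s+1}B^k,\qquad 0\le s\le\tau,
\]
and $\|e_k\|_{L^1}\le C B^k$, for absolute constants $C,c,B$. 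I will tune $\delta$ and $r$ later to land on $B_1=2e$, $B_3=8$, and $B_2=160$.

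\emph{Inactive factors.} For each $n$ I want a cutoff $\theta_n$ with $\theta_n=1$ on $[-1,1]$, $M_0(\theta_n)\le 1+1/n$, and $M_s(\theta_n)\le n^{-1-s}$ for $0\le s\le\tau$. Dilating a fixed cutoff by $R$ multiplies $M_s$ by $R^{-s}$, so the moment part is handled by choosing $R=R(n,\tau)$ large; only $\|\theta_n\|_{L^1}\sim R$ then depends on $n$ and $\tau$. The delicate point is $M_0$, which is dilation-invariant. Here I would use a smooth profile that is $1$ on $[-1,1]$ and decays very slowly, e.g. a de la Vallée Poussin/Fejér-type construction of the form $\theta=\frac{1}{|I|}\mathbf 1_{I}*\varphi$ with $\varphi\ge0$, $\widehat\varphi\ge0$, $\int\varphi=1$, and $I$ long. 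For such $\theta$, $\widehat\theta$ is nearly nonnegative, so $\|\widehat\theta\|_{L^1}$ is close to $\theta(0)=1$. I expect this construction, namely getting $M_0(\theta_n)-1=O(1/n)$ together with exact equality $\theta_n=1$ on $[-1,1]$, to be the main obstacle. If it fails, my fallback is to accept $M_0(\theta)\le1+\epsilon$ with $\epsilon$ fixed and instead reorganize via the tail structure used later, which only needs the $n$-independence of the moments for $\|\bnu\|_0$ bounded. That fallback is weaker than the stated lemma.

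\emph{Assembly.} Set
\[
g_e^{\bnu}(\by_n):=\prod_{j\in\operatorname{supp}\bnu}e_{\nu_j}(y_j)\prod_{j\notin\operatorname{supp}\bnu}\theta_n(y_j),
\]
which equals $\by_n^{\bnu}$ on $U_n$. The $L^1$ bound \eqref{eq:polarization-monomial-L1} is immediate, since $\|g_e^{\bnu}\|_{L^1}=\prod_{j}\|e_{\nu_j}\|_{L^1}\,\|\theta_n\|_{L^1}^{\,n-\|\bnu\|_0}$ and $\|e_k\|_{L^1}\le CB^k$ can be compared with $\|\theta_n\|_{L^1}$ by taking $R$ larger than that constant.

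For the moments, the Fourier transform factorizes as in the proof of Lemma~\ref{lem:compatible-product}. I then use
\[
\|\xi\|_1^s\le m^{(s-1)_+}\sum_j|\xi_j|^s,
\]
where $m$ is the number of factors in the corresponding sum, and I split the coordinate sum into active and inactive parts. The inactive part contributes at most $(1+1/n)^n\cdot n\cdot n^{-1-s}\le e$ times products of $M_0(e_{\nu_j})$. The active part produces a factor $\|\bnu\|_0^{(s-1)_+}\,C^{\|\bnu\|_0}$ times one $M_s(e_{\nu_j})$. To remove the residual growth in $\|\bnu\|_0$, I use the elementary inequality $k^s\le s^se^k$, together with $\|\bnu\|_0\le\|\bnu\|_1$. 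This converts $\|\bnu\|_0^{s}$ into $(\tau+4)^s e^{\|\bnu\|_1}$, which is absorbed into $B_3^s(\tau+4)^{s+1}B_1^{\|\bnu\|_1}$. The result is \eqref{eq:polarization-monomial-moments} with constants independent of $n$.
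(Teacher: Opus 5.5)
There is a genuine gap, and it sits in the active factors rather than in the inactive cutoff you flagged. The inactive side can be made to work, for instance with $\theta_n=\phi(\cdot/R)+(1-\phi(\cdot/R))\chi$, where $\phi$ is a Gaussian with $\widehat\phi\ge0$, $\phi(0)=1$, and $\chi$ is a fixed smooth cutoff equal to $1$ on $[-1,1]$. This gives $M_0(\theta_n)\le1+O(R^{-2})$ and $M_\tau(\theta_n)\to0$ as $R\to\infty$.

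The problem is the active side. Because $g_e^{\bnu}$ is a tensor product, its Fourier transform factorizes exactly:
\[M_0(g_e^{\bnu})=\prod_{j\in\operatorname{supp}\bnu}M_0(e_{\nu_j})\,M_0(\theta_n)^{n-\|\bnu\|_0}.\]
Likewise, every term of your splitting of $M_s$ contains $M_0(e_{\nu_k})$ for all but one active coordinate. Lemma~\ref{lem:uniform-head-moments} with $n=1$ gives only $M_0(e_k)\le C(\tau+4)B^k$, which is the case $s=0$ of your own bound. So what you actually obtain is a factor $(C(\tau+4))^{\|\bnu\|_0}$. Your step ``the active part produces a factor $\|\bnu\|_0^{(s-1)_+}C^{\|\bnu\|_0}$'' tacitly uses a $\tau$-independent bound on $M_0(e_k)$ that you never establish.

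This growth is real, not an artifact of a crude estimate. The cutoff $\chi_{q,\delta}$ equals $\mathbf 1_{[-1-\delta/2,1+\delta/2]}$ convolved with a centred B-spline whose standard deviation is of order $\delta/\sqrt q$. Hence its Fourier transform is $\sin(2\pi(1+\delta/2)\xi)/(\pi\xi)$ times a factor bounded below for $|\xi|\lesssim\sqrt q/\delta$, and $M_0(t^k\chi_{q,\delta})\gtrsim\log q$. For $\|\bnu\|_0=\|\bnu\|_1=m$, your construction therefore has $M_0(g_e^{\bnu})\ge(c\log q-C)^m$. This violates \eqref{eq:polarization-monomial-moments} at $s=0$ for large $\tau$ and $m$, whatever universal $B_1$ you pick.

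This matters downstream. $N_b$ in \eqref{eq:ridge-tail-smallness} is fixed from $B_1$ independently of $\tau$, and a $\tau$-dependent $B_1$ would destroy the $[D_B(4+\tau)]^\tau$ bound of Theorem~\ref{the:ridge-map-barron}. To rescue a tensor-product route you would need one-dimensional extensions of $t^k$ with $M_0(e_k)\le CB^k$ uniformly in $\tau$, which no cutoff of this type provides.

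The missing idea is to pay for a $C^q$ one-dimensional cutoff only once per term, regardless of $\|\bnu\|_0$. The paper does this by polarization. With $m=\|\bnu\|_1$ it writes
\[\by_n^{\bnu}=\frac{m^m}{2^mm!}\sum_{\boldsymbol\varepsilon}\omega_{\boldsymbol\varepsilon}\langle\mathbf a_{\boldsymbol\varepsilon},\by_n\rangle^m,\qquad \|\mathbf a_{\boldsymbol\varepsilon}\|_1\le1.\]
Each ridge power is then extended as $g_e^{\bm0}(\by_n)\,p_e^{m,q}(\langle\mathbf a_{\boldsymbol\varepsilon},\by_n\rangle)$, using a single $n$-dimensional extension $g_e^{\bm0}$ of $1$ with $M_0+M_\tau\le2$. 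The resulting Fourier transform is $\int_{\R}\widehat{p_e^{m,q}}(\zeta)\widehat{g_e^{\bm0}}(\xi-\zeta\mathbf a)\,d\zeta$. The inequality $\|\eta+\zeta\mathbf a\|_1\le\|\eta\|_1+|\zeta|$ then bounds its moments by those of $g_e^{\bm0}$ times a single factor $M_s(p_e^{m,q})\le10(4q)^{s+1}2^m$. So the $q$-dependence enters once, as $(\tau+4)^{s+1}$, while the $2^m$ terms with coefficient $m^m/(2^mm!)$ cost only $e^m$.

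A minor additional point: your requirement $M_s(\theta_n)\le n^{-1-s}$ cannot hold for small $s$, since $M_s(\theta_n)\to M_0(\theta_n)\ge1$ as $s\to0^+$. Combined with the coordinatewise split $\|\xi_I\|_1^s\le\sum_{j\in I}|\xi_j|^s$, this makes the inactive contribution of size $n$. You can repair it by bounding the whole inactive block by its $M_0+M_\tau$ and requiring only $M_\tau(\theta_n)\le n^{-1-\tau}$. That repair, however, does not touch the main gap.
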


\begin{proof}
We construct the desired extension using one-dimensional
monomial extensions and an integrable extension of the
constant function $1$ on $U_n$.
Set $m:=\|\bm\nu\|_1$ and $q:=\lceil\tau\rceil+2$.

\noindent\textbf{Step 1. Extensions of univariate monomials and constants.}
For $m\in\mathbb N_0$, the polynomial $z^m$ is holomorphic on
$\bbU_1
:=\{z\in\C:\operatorname{dist}(z,[-1,1])<1\}$
and satisfies $|z^m|\le2^m$ there.
Applying the scalar case of
Lemma~\ref{lem:uniform-head-moments} with
$n=1$, $r=1$, and $\delta=1/4$ yields the extension
$p_e^{m,q}(t):=t^m\chi_{q,1/4}(t) \in C_c^q(\R;\R)$,
which satisfies $\|p_e^{m,q}\|_{L^\infty(\R)} \leq 2^m$ and, simultaneously for all $0\le s\le\tau$,
\begin{equation}\label{eq:tail-one-dimensional-moments}
M_s(p_e^{m,q})\le10(4q)^{s+1}2^m.
\end{equation}

For the constant function $1$ on $U_n$ ($\bm\nu = \bm0$), the construction in \cite[Lemma 4]{2023SiegelXu}, adapted
to the weight $1+\|\xi\|_1^\tau$ yields
a real-valued integrable extension $g_e^{\bm 0}$ of $1$ satisfying
$M_0(g_e^{\bm 0})  + M_\tau(g_e^{\bm 0}) \le2$. We fix this extension for all $\bnu$.
Using $\|\xi\|_1^s\le1+\|\xi\|_1^\tau$,
we obtain
\begin{equation}\label{lem:uniform-tail-moments_eq1}
M_s(g_e^{\bm0})\le2,
\qquad 0\le s\le\tau.
\end{equation}
\noindent\textbf{Step 2. Polarization and construction of the extension.}
The case $\bnu=\bm0$ follows from Step~1.
Henceforth, assume $m=\|\bnu\|_1\ge1$.
Choose $j_1,\ldots,j_m\in\{0,\ldots,n-1\}$ so that
each index $j$ occurs $\nu_j$ times. 
For $\boldsymbol\varepsilon\in\{-1,1\}^m$, 
the real polarization identity~\cite[Appendix, (A.4)]{thomas2014polarization}
gives
\begin{equation}\label{eq:discrete-monomial-polarization}
\by_n^{\bnu}
=\frac{m^m}{2^m m!}\sum_{\boldsymbol\varepsilon\in\{-1,1\}^m}
\omega_{\boldsymbol\varepsilon}
\langle\mathbf a_{\boldsymbol\varepsilon},\by_n\rangle^m \quad \textrm{with} \quad \mathbf a_{\boldsymbol\varepsilon}
:=\frac1m\sum_{\ell=1}^m\varepsilon_\ell\mathbf e_{j_\ell}, \quad \omega_{\boldsymbol\varepsilon}:=\prod_{\ell=1}^m\varepsilon_\ell.
\end{equation}
Here $\mathbf e_j$ is the $j$th coordinate vector in $\R^n$ and $\|\mathbf a_{\boldsymbol\varepsilon}\|_1\le1$.
Define the explicit extension
\begin{equation}\label{eq:discrete-monomial-extension}
g_e^{\bnu}(\by_n)
:=\frac{m^m}{2^m m!} g_e^{\bm0}(\by_n)
\sum_{\boldsymbol\varepsilon\in\{-1,1\}^m}
\omega_{\boldsymbol\varepsilon}
p_e^{m,q}(\langle\mathbf a_{\boldsymbol\varepsilon},\by_n\rangle).
\end{equation}
On $U_n$, the arguments of $p_e^{m,q}$ belong to $[-1,1]$
and $g_e^{\bm0}=1$, so
\eqref{eq:discrete-monomial-polarization} gives
$g_e^{\bnu}=\by_n^{\bnu}$. Since
$m^m/m!\le e^m$, the finite sum also gives
\[
\|g_e^{\bnu}\|_{L^1(\R^n)}
\le2^m  \frac{m^m}{2^m m!}\|g_e^{\bm0}\|_{L^1(\R^n)}
\|p_e^{m,q}\|_{L^\infty(\R)}
\le(2e)^m\|g_e^{\bm0}\|_{L^1(\R^n)}.
\]

\noindent\textbf{Step 3. Fourier moments.}
For $\|\mathbf a\|_1\le1$, write
$g_{\mathbf a}(\by_n):=g_e^{\bm0}(\by_n)
p_e^{m,q}(\langle\mathbf a,\by_n\rangle)$.
Fourier inversion and Fubini's theorem, justified by
$\|g_e^{\bm0}\|_{L^1(\R^n)}M_0(p_e^{m,q})<\infty$, give
\[
\widehat g_{\mathbf a}(\xi)
=\int_{\R^n}g_e^{\bm0}(\by_n)
\Bigl(\int_{\R}\widehat{p_e^{m,q}}(\zeta)
e^{2\pi i\zeta\langle\mathbf a,\by_n\rangle}\dd\zeta\Bigr) e^{-2\pi i \xi \cdot \by_n} \dd\by_n
=\int_{\R}\widehat{p_e^{m,q}}(\zeta)
\widehat{g_e^{\bm0}}(\xi-\zeta\mathbf a)\,\dd\zeta.
\]
Combining \eqref{eq:tail-one-dimensional-moments}
and \eqref{lem:uniform-tail-moments_eq1}, we obtain $M_0(g_{\mathbf a})\le M_0(g_{e}^{\bm0})M_0(p_e^{m,q})\le80q\,2^m$. For $0<s\le\tau$,
Tonelli's theorem and
$\|\eta+\zeta\mathbf a\|_1^s
\le2^{(s-1)_+}(\|\eta\|_1^s+|\zeta|^s)$ yield
\[
\begin{aligned}
M_s(g_{\mathbf a})
&\le2^{(s-1)_+}\bigl[
M_s(g_e^{\bm0})M_0(p_e^{m,q})
+M_0(g_e^{\bm0})M_s(p_e^{m,q})\bigr]\\
&\le80q\,2^m\,2^{(s-1)_+}\bigl[1+(4q)^s\bigr]
\le160q(8q)^s2^m.
\end{aligned}
\]
The last bound also holds for $s=0$. Applying it to the
finite sum in \eqref{eq:discrete-monomial-extension} gives
\[
M_s(g_e^{\bnu})
\le2^m \frac{m^m}{2^m m!}\,160q(8q)^s2^m
\le160q(8q)^s(2e)^m,
\qquad 0\le s\le\tau.
\]
Since $q\le\tau+4$, this proves
\eqref{eq:polarization-monomial-moments}.
\end{proof}

%% file: Chapters/3_Theory_coercive.tex

\section{Barron regularity under uniform coercivity}
\label{sec:BarronNormResults}

In this section, we establish spectral Barron regularity for
the parameter-to-solution map $\by_N\mapsto u_N(\by_N)$ defined
by \eqref{eq:elliptic-pde_trunc}, with norm bounds uniform in $N$.
Our argument combines the parametric holomorphy techniques
in~\cite{CohenDeVoreSchwab2011} with the Fourier moment estimates
from Section~\ref{sec:harmonic}.
We first establish a holomorphic extension in
Subsection~\ref{sec:poly}, then derive a partial Taylor expansion
and coefficient estimates in Subsection~\ref{ssec:Taylor}, and finally prove
Theorem~\ref{the:ridge-map-barron} in
Subsection~\ref{ssec:proof-barron}.

\subsection{Holomorphic extension near the real cube}
\label{sec:poly}

Fix $N\ge1$ and define the following complex neighborhood of
$U_N=[-1,1]^N$:
\begin{equation}
\label{eq:Ucal}
\bbU:=\left\{\bz_N\in\C^N:
\inf_{\by_N\in U_N}C_{\mathfrak b}
\sum_{j<N}|b_j|\,|z_j-y_j|<\frac{C_{\star}}{2}\right\}.
\end{equation}
The distance to the real cube is weighted by the coefficient
amplitudes $|b_j|$. 

Extend the affine coefficient parameterization to complex
parameters. For $\bz_N\in\C^N$, let
$\mathcal A(\bz_N)\in L(\mathbb V_{\C},\mathbb V_{\C}')$
be the bounded linear operator associated with the complexified variational form:
\begin{equation}\label{eq:parameter-operator}
\langle \mathcal A(\bz_N)v,w\rangle_{\mathbb V_{\C}',\mathbb V_{\C}}
:=\mathfrak b(a_N(\cdot;\bz_N);v,w),
\qquad v,w\in \mathbb V_{\C}.
\end{equation}
Here $\mathbb V_{\C}'$ denotes the space of continuous
conjugate-linear functionals on $\mathbb V_{\C}$, and the
complexified form is linear in $v$ and conjugate-linear in $w$.
The following proposition establishes the holomorphic
extension of $u_N$ and uniform bounds for $\mathcal A$ and its inverse
on $\bbU$.

\begin{proposition}\label{prop:holomorphic-extension}
Under Assumption~\ref{ass:ellipticity}, the map $\mathcal A$ is complex-affine and
hence holomorphic in the operator norm on $\C^N$, with
\begin{equation}\label{eq:parameter-operator-affine}
\begin{gathered}
\mathcal A(\bz_N)=\mathcal A(\bm0)+\sum_{j=0}^{N-1}z_j\mathcal A_j',
\quad \mathcal A_j':=\partial_{z_j}\mathcal A,\quad \textrm{and}\quad
\|\mathcal A_j'\|_{L(\mathbb V_{\C},\mathbb V_{\C}')}
\le C_{\mathfrak b}|b_j|,
\end{gathered}
\end{equation}
where the operators $\mathcal A_j'$ are independent of $\bz_N$, $j=0,\ldots,N-1$.

For every $\bz_N\in\bbU$, the operator $\mathcal A(\bz_N)$
is invertible,  and the map
$\bz_N\mapsto \mathcal A(\bz_N)^{-1}$ is holomorphic on $\bbU$ in the operator norm. Moreover,
\begin{equation}\label{eq:parameter-operator-bounds}
\sup_{\bz_N\in\bbU}
\|\mathcal A(\bz_N)\|_{L(\mathbb V_{\C},\mathbb V_{\C}')}
\le C^{\star}+\frac{C_{\star}}{2},
\qquad
\sup_{\bz_N\in\bbU}
\|\mathcal A(\bz_N)^{-1}\|_{L(\mathbb V_{\C}',\mathbb V_{\C})}
\le\frac{2}{C_{\star}}.
\end{equation}
Consequently, the parameter-to-solution map $u_N:U_N\to \mathbb V$
defined by \eqref{eq:elliptic-pde_trunc} admits a holomorphic
extension $u_N: \bbU\to \mathbb V_{\C}$ given by $u_N(\bz_N)=\mathcal A(\bz_N)^{-1}f$,
with
\begin{equation}\label{eq:energyEstimate}
\sup_{\bz_N\in\bbU}\|u_N(\bz_N)\|_{\mathbb V_{\C}}\le2C^f,
\qquad C^f:=(C_{\star})^{-1}\|f\|_{\mathbb V'}.
\end{equation}
\end{proposition}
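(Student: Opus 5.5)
The proof proceeds in three stages: affine structure, a perturbation (Neumann series) argument on $\bbU$, and composition with the fixed source $f$.

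\textbf{Affine structure.} By (\ref{ass:ellipticity-B2}), the map $a\mapsto\mathfrak b(a;\cdot,\cdot)$ is complex-affine. Since $a_N(\cdot;\bz_N)=\bar a+\sum_{j<N}b_jz_j\psi_j$ is affine in $\bz_N$, the map $\bz_N\mapsto\mathcal A(\bz_N)$ is affine. Concretely, we define $\mathcal A_j'$ through the form
\[
\langle\mathcal A_j'v,w\rangle=\mathfrak b(\bar a+b_j\psi_j;v,w)-\mathfrak b(\bar a;v,w).
\]
Iterated use of the affine identity (with $t=z_j$, together with an induction over $j$) then yields \eqref{eq:parameter-operator-affine}. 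The complex version of the coefficient-difference estimate in (\ref{ass:ellipticity-A1}) gives
\[
\|\mathcal A_j'\|\le C_{\mathfrak b}\|b_j\psi_j\|_{L^\infty}\le C_{\mathfrak b}|b_j|.
\]
An affine map into a Banach space with finitely many variables is entire.

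\textbf{Invertibility and bounds on $\bbU$.} Fix $\bz_N\in\bbU$ and choose $\by_N\in U_N$ with
\[
C_{\mathfrak b}\sum_j|b_j||z_j-y_j|<C_\star/2;
\]
such a $\by_N$ exists by the definition of the infimum. Then
\[
\mathcal A(\bz_N)=\mathcal A(\by_N)+E, \qquad E=\sum_j(z_j-y_j)\mathcal A_j', \qquad \|E\|<C_\star/2.
\]
The upper bound on $\|\mathcal A(\bz_N)\|$ follows from (\ref{ass:ellipticity-A2}), once we note that the real bound $C^\star$ transfers to the complexification. That transfer holds because for symmetric real forms the complexified sesquilinear form satisfies
\[
|\mathfrak b(v,w)|\le C^\star\|v\|\|w\|,
\]
as one checks by splitting into real and imaginary parts; the resulting constant may be $2C^\star$. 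This constant is the one point I would need to check carefully.

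For the inverse, the cleaner route is coercivity. For $v=v_1+iv_2$, symmetry gives
\[
\operatorname{Re}\langle\mathcal A(\by_N)v,v\rangle=\mathfrak b(v_1,v_1)+\mathfrak b(v_2,v_2)\ge C_\star\|v\|^2_{\mathbb V_\C},
\]
since the cross terms cancel. Hence
\[
\operatorname{Re}\langle\mathcal A(\bz_N)v,v\rangle\ge (C_\star-\|E\|)\|v\|^2> \tfrac{C_\star}{2}\|v\|^2.
\]
The complex Lax--Milgram lemma then gives invertibility with $\|\mathcal A(\bz_N)^{-1}\|\le 2/C_\star$. Holomorphy of the inverse follows because inversion is analytic on the open set of invertible operators, and $\bbU$ is open by continuity of the weighted distance.

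\textbf{Solution map.} Define $u_N(\bz_N)=\mathcal A(\bz_N)^{-1}f$, where $f$ is viewed in $\mathbb V_\C'$ by conjugate-linear extension with the same norm. This map is holomorphic as a composition of a holomorphic operator-valued map with a fixed vector. It coincides with the real solution on $U_N$ by uniqueness in Theorem~\ref{thm:real-well-posedness}. Finally,
\[
\|u_N(\bz_N)\|\le\frac{2}{C_\star}\|f\|_{\mathbb V'}=2C^f.
\]

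The main obstacle is the bookkeeping of the complexified norms: showing that the real coercivity and boundedness constants pass to $\mathbb V_\C$ with exactly the stated constants, and that the complex coefficient-difference estimate controls $E$ without an extra factor.
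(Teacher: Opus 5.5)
The overall architecture of your proposal matches the paper's proof: the affine decomposition from (\ref{ass:ellipticity-B2}), perturbation from a nearby real point $\by_N$, the complexified coercivity identity, and Lax--Milgram for the $2/C_{\star}$ bound. It also matches on holomorphy of inversion (which the paper writes out as a local Neumann series) and on setting $u_N(\bz_N)=\mathcal A(\bz_N)^{-1}f$. Two of the worries in your last paragraph are already settled by what you wrote. First, (\ref{ass:ellipticity-B2}) gives the difference estimate directly for complex arguments, so $\|E\|\le C_{\mathfrak b}\sum_j|b_j|\,|z_j-y_j|<C_{\star}/2$ with no extra factor. Second, the coercivity computation is exact.

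The gap is the bound $\|\mathcal A(\by_N)\|_{L(\mathbb V_{\C},\mathbb V_{\C}')}\le C^{\star}$, which you leave unresolved, and the route you suggest does not give it. Estimating the four real pieces of $\mathfrak b(v_1+iv_2,w_1+iw_2)$ separately yields only $C^{\star}(\|v_1\|+\|v_2\|)(\|w_1\|+\|w_2\|)\le 2C^{\star}\|v\|_{\mathbb V_{\C}}\|w\|_{\mathbb V_{\C}}$. That leads to $2C^{\star}+C_{\star}/2$ instead of the stated $C^{\star}+C_{\star}/2$.

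The missing observation is the paper's remark that complexification preserves operator norms between real Hilbert spaces. Let $B\in L(\mathbb V)$ be the Riesz representative of the real form, $(Bv,w)_{\mathbb V}=\mathfrak b(a_N(\cdot;\by_N);v,w)$, so $\|B\|\le C^{\star}$ by (\ref{ass:ellipticity-A2}). The sesquilinear extension is then $\mathfrak b(a_N(\cdot;\by_N);v,w)=(Bv_1+iBv_2,w)_{\mathbb V_{\C}}$, and
\[
\|Bv_1+iBv_2\|_{\mathbb V_{\C}}^2=\|Bv_1\|_{\mathbb V}^2+\|Bv_2\|_{\mathbb V}^2\le (C^{\star})^2\|v\|_{\mathbb V_{\C}}^2 .
\]
Hence $|\mathfrak b(a_N(\cdot;\by_N);v,w)|\le C^{\star}\|v\|_{\mathbb V_{\C}}\|w\|_{\mathbb V_{\C}}$ exactly, and symmetry is not needed.

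If you prefer to use the symmetry you mentioned, there is a second route. For real $\by_N$ the complexified form is Hermitian and positive, so Cauchy--Schwarz gives $|\mathfrak b(v,w)|\le\mathfrak b(v,v)^{1/2}\mathfrak b(w,w)^{1/2}$, with $\mathfrak b(v,v)=\mathfrak b(v_1,v_1)+\mathfrak b(v_2,v_2)\le C^{\star}\|v\|_{\mathbb V_{\C}}^2$.

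With either argument inserted, the triangle inequality gives the first bound in \eqref{eq:parameter-operator-bounds}, and your proof is complete.
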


\begin{proof}
The affine property in
Assumption~\ref{ass:ellipticity-B2} gives the decomposition in
\eqref{eq:parameter-operator-affine}. 
The coefficient-difference estimate in Assumption~\ref{ass:ellipticity-A1}, 
extended to complex coefficients and arguments by
Assumption~\ref{ass:ellipticity-B2}, together with
\eqref{eq:target-coefficient-expansion} and
$\|\psi_j\|_{L^\infty(\Omega)}\le1$, implies for all $\bz_N,\widetilde\bz_N\in\C^N$
\begin{equation}
\label{eq:parameter-operator-difference}
\|\mathcal A(\bz_N)-\mathcal A(\widetilde\bz_N)\|_{
L(\mathbb V_{\C},\mathbb V_{\C}')}
\le C_{\mathfrak b}\|a_N(\cdot; \bz_N) - a_N(\cdot; \widetilde\bz_N)\|_{L^{\infty}(\Omega)}
\le C_{\mathfrak b}\sum_{j<N}|b_j|\,|z_j-\widetilde z_j|.
\end{equation}
Varying only the $j$th coordinate yields the bound \eqref{eq:parameter-operator-affine} on $\mathcal A_j'$.
Since $\mathcal A$ is complex-affine, it is holomorphic in the operator norm.

The uniform real coercivity in Assumption~\ref{ass:ellipticity-A2} is preserved under complexification of the real symmetric form. Indeed, if $v=v_1+i v_2\in \mathbb V_{\C}$
and $\by_N\in U_N$, then
\[
\langle \mathcal A(\by_N)v,v\rangle
=\mathfrak b(a_N(\cdot;\by_N);v_1,v_1)
+\mathfrak b(a_N(\cdot;\by_N);v_2,v_2)
\ge C_{\star}\|v\|_{\mathbb V_{\C}}^2.
\]
The imaginary cross terms cancel by symmetry. 
Given $\bz_N\in\bbU$, choose $\by_N\in U_N$ such that
$C_{\mathfrak b}\sum_{j<N}|b_j|\,|z_j-y_j|<C_{\star}/2$.
Using \eqref{eq:parameter-operator-difference}, we obtain $\|\mathcal A(\bz_N)-\mathcal A(\by_N)\|_{L(\mathbb V_{\C},\mathbb V_{\C}')}\leq \frac{C_{\star}}{2}$.
Since complexification preserves operator norms between
real Hilbert spaces, Assumption~\ref{ass:ellipticity-A2} gives
$\|\mathcal A(\by_N)\|_{
L(\mathbb V_{\C},\mathbb V_{\C}')}\le C^{\star}$.
The triangle inequality therefore gives the first bound
in \eqref{eq:parameter-operator-bounds}.
Moreover, the perturbation estimate \eqref{eq:parameter-operator-difference} gives
\[
\begin{aligned}
\operatorname{Re}\langle \mathcal A(\bz_N)v,v\rangle
\ge\langle \mathcal A(\by_N)v,v\rangle
-\|\mathcal A(\bz_N)-\mathcal A(\by_N)\|_{L(\mathbb V_{\C},\mathbb V_{\C}')}\,\|v\|_{\mathbb V_{\C}}^2
\ge\frac{C_{\star}}{2}\|v\|_{\mathbb V_{\C}}^2,
\qquad v\in \mathbb V_{\C}.
\end{aligned}
\]
The Lax-Milgram theorem 
implies that
$\mathcal A(\bz_N)$ is invertible and yields the second bound in \eqref{eq:parameter-operator-bounds}.

To prove holomorphy of the parameter-to-solution map, we first establish holomorphy of the inverse map $\bz_N\mapsto \mathcal A(\bz_N)^{-1}$. Fix $\bz_N^0\in\bbU$
and set
$T(\bz_N):=\mathcal A(\bz_N^0)^{-1}
\bigl(\mathcal A(\bz_N)-\mathcal A(\bz_N^0)\bigr)$.
Since $T$ is continuous and $T(\bz_N^0)=0$, there is a
neighborhood of $\bz_N^0$ contained in $\bbU$ on which
$\|T(\bz_N)\|_{L(\mathbb V_{\C},\mathbb V_{\C})}\le1/2$.
On this neighborhood, the Neumann series gives
\begin{equation}
\mathcal A(\bz_N)^{-1}
= \Bigl(\mathcal A(\bz_N^0)\bigl(I + T(\bz_N)\bigr)\Bigr)^{-1} = \sum_{j=0}^{\infty}(-T(\bz_N))^j \mathcal A(\bz_N^0)^{-1}.
\end{equation}
Each term is holomorphic, and the series converges uniformly
in the operator norm on this neighborhood.
As $\bz_N^0$ is arbitrary, $\mathcal A^{-1}$ is holomorphic on $\bbU$.
Thus, $u_N(\bz_N):=\mathcal A(\bz_N)^{-1}f$ is holomorphic
and agrees with the real solution on $U_N$ by uniqueness,
which yields \eqref{eq:energyEstimate}.
\end{proof}

\begin{lemma}
   \label{lem:G_continuity}
Under Assumption~\ref{ass:ellipticity}, for $g,\widetilde g\in\mathbb B^{\bb}(\Omega)$, there holds
\begin{equation}\label{eq:solution-lipschitz}
\begin{aligned}
\|\mathcal G^\dagger(g)-\mathcal G^\dagger(\widetilde g)\|_{\mathbb H}
&\le C_S
\|\mathcal G^\dagger(g)-\mathcal G^\dagger(\widetilde g)\|_{\mathbb V}  \le\frac{C_SC_{\mathfrak b}C^f}{C_{\star}}
\|g-\widetilde g\|_{L^\infty(\Omega)}.
\end{aligned}
\end{equation}
\end{lemma}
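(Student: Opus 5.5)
The plan is a standard Strang-type perturbation argument for the variational problem, using only the real uniform bounds in Assumption~\ref{ass:ellipticity-A2} and the coefficient-difference estimate in Assumption~\ref{ass:ellipticity-A1}.

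Write $g=\Phib(\by)$ and $\widetilde g=\Phib(\widetilde\by)$ with $\by,\widetilde\by\in U_\infty$, and set $a:=\bar a+g$, $\widetilde a:=\bar a+\widetilde g$. Both lie in the admissible real family \eqref{eq:target-coefficient-expansion}. Let $u:=\mathcal G^\dagger(g)$ and $\widetilde u:=\mathcal G^\dagger(\widetilde g)$. By Theorem~\ref{thm:real-well-posedness} both exist and satisfy $\|\widetilde u\|_{\mathbb V}\le C^f=C_\star^{-1}\|f\|_{\mathbb V'}$.

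Subtracting the two variational equations gives, for all $v\in\mathbb V$,
\[
\mathfrak b(a;u,v)-\mathfrak b(\widetilde a;\widetilde u,v)=0 .
\]
Adding and subtracting $\mathfrak b(a;\widetilde u,v)$ yields
\[
\mathfrak b(a;u-\widetilde u,v)=\mathfrak b(\widetilde a;\widetilde u,v)-\mathfrak b(a;\widetilde u,v).
\]
Now take $v=u-\widetilde u$. The left side is bounded below by $C_\star\|u-\widetilde u\|_{\mathbb V}^2$ by coercivity in (\ref{ass:ellipticity-A2}). The right side is bounded above by
\[
C_{\mathfrak b}\,\|a-\widetilde a\|_{L^\infty(\Omega)}\,\|\widetilde u\|_{\mathbb V}\,\|u-\widetilde u\|_{\mathbb V}
\]
by (\ref{ass:ellipticity-A1}). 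Since $a-\widetilde a=g-\widetilde g$, dividing by $\|u-\widetilde u\|_{\mathbb V}$ (the case $u=\widetilde u$ is trivial) and inserting $\|\widetilde u\|_{\mathbb V}\le C^f$ gives the $\mathbb V$ bound with constant $C_{\mathfrak b}C^f/C_\star$.

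The first inequality in \eqref{eq:solution-lipschitz} is the embedding \eqref{eq0_poincare} applied to $u-\widetilde u\in\mathbb V$. Multiplying the $\mathbb V$ bound by $C_S$ then completes the chain.

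There is no real obstacle. The only point to check is that $\mathbb B^{\bb}(\Omega)$ is exactly the set where the uniform coercivity applies, so that the constant is independent of $g$ and $\widetilde g$. No truncation or complexification is needed; the same argument applies verbatim to the truncated solutions $u_N$.
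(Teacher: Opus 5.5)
Your proposal is correct and follows essentially the same argument as the paper. You subtract the two variational equations, test with $u-\widetilde u$, bound the left side by the coercivity in (\ref{ass:ellipticity-A2}) and the right side by the coefficient-difference estimate in (\ref{ass:ellipticity-A1}) together with the energy bound $\|\widetilde u\|_{\mathbb V}\le C^f$, and then apply the embedding constant $C_S$. The paper's terse proof consists of exactly these steps.
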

\begin{proof}
Subtracting the corresponding variational equations and using
the coefficient-difference estimate in
Assumption~\ref{ass:ellipticity-A1},
the uniform coercivity in
Assumption~\ref{ass:ellipticity-A2},
the energy bound $\|\mathcal G^\dagger(\widetilde g)\|_{\mathbb V}\le C^f$,
and the continuous embedding $\mathbb V\hookrightarrow \mathbb H$, we obtain \eqref{eq:solution-lipschitz}.
\end{proof}


\subsection{Partial Taylor expansion
and coefficient estimates}
\label{ssec:Taylor}
In this subsection, we derive a partial Taylor expansion
of the parameter-to-solution map in the tail variables
and establish coefficient estimates for the
proof of Barron regularity.
Following the framework in
\cite[Section~3]{CohenDeVoreSchwab2011}, we split the
parameter indices into a finite set $\mathfrak{J}$ and a tail $\mathfrak{J}^c$,
with the split chosen using the summability of $\bb$.
One key difference is that we only expand the solution with respect to the tail index set $\mathfrak{J}^c$.
Let $B_1,B_2,B_3$ be the universal constants in
Lemma~\ref{lem:uniform-tail-moments}. Since $\bb\in\ell^1$, choose
$N_b\in\N$, independently of $N$ and $\tau$, such that
\begin{equation}\label{eq:ridge-tail-smallness}
B_{\bm\kappa}:=B_1\sum_{j\ge N_b} \kappa_j \le\frac12,
\quad \textrm{with}\quad \kappa_j:=\frac{2C_{\mathfrak b}|b_j|}{C_{\star}},
\qquad \bm\kappa :=(\kappa_0, \kappa_1, \cdots ).
\end{equation}
For each $N\ge1$, write
\[
N_0:=\min\{N,N_b\},\qquad
\mathfrak{J}:=\{0,\ldots,N_0-1\},\qquad \mathfrak{J}^c:=\{N_0,\ldots,N-1\}.
\]
Then $\mathfrak{J}^c=\varnothing$ if $N\le N_b$; otherwise,
$\mathfrak{J}^c\subseteq\{j\ge N_b\}$.

To apply Lemma~\ref{lem:uniform-head-moments} to the
variables indexed by $\mathfrak{J}$, define the complex neighborhood
\begin{equation}
\label{U_r_E}
\bbU^{N_0}_r:=\bigl\{\bz\in\C^{N_0}:
\operatorname{dist}(z_j,[-1,1])<r,\ j\in \mathfrak{J}\bigr\},\quad r:=\min\left\{1,\frac{C_{\star}}{4C_{\mathfrak b}\sum_{j<N_b}|b_j|}\right\}
\end{equation}
We set $r=1$ if the denominator vanishes.
We have the following partial Taylor expansion:

\begin{lemma}\label{lem:tail-taylor-coeff}
Under Assumption~\ref{ass:ellipticity}, and with the
notation above, for every $N\ge1$, there exist holomorphic functions
$t_{\bnu}:\bbU^{N_0}_r\to \mathbb V_{\C}$ such that
\begin{equation}\label{eq:partial-taylor-expansion}
u_N(\bz_{\mathfrak{J}},\by_{\mathfrak{J}^c})
=\sum_{\bnu\in\N_0^{\mathfrak{J}^c}}t_{\bnu}(\bz_{\mathfrak{J}})\by_{\mathfrak{J}^c}^{\bnu}.
\end{equation}
The series converges absolutely and uniformly in the
$\mathbb V_{\C}$-norm on $\bbU^{N_0}_r\times[-1,1]^{\mathfrak{J}^c}$. Moreover,
\begin{equation}\label{eq:tail-taylor-coeff-bound}
\|t_{\bnu}\|_{L^\infty(\bbU^{N_0}_r;\mathbb V_{\C})}
\le S_{\bnu}:=2C^f\frac{\|\bnu\|_1!}{\bnu!}\bm\kappa^{\bnu},
\qquad
\sum_{\bnu\in\N_0^{\mathfrak{J}^c}}S_{\bnu}B_1^{\|\bnu\|_1}
\le S_*:=\frac{2C^f}{1-B_{\bm\kappa}}.
\end{equation}
Here $B_{\bm\kappa}$ and $\bm\kappa$ are defined in
\eqref{eq:ridge-tail-smallness}. If $\mathfrak{J}^c$ is empty, the sum contains only $\bnu=0$ and
$\by_{\mathfrak{J}^c}^{\bnu}=1$.
\end{lemma}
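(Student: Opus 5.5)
The plan follows the Cohen--DeVore--Schwab recursion, applied only to the tail variables with the head variables $\bz_{\mathfrak J}$ frozen as complex parameters. I would carry out four steps in order: reduce to a uniform analyticity statement, derive a derivative recursion, sum the coefficients, and then establish convergence. The holomorphy input comes from Proposition~\ref{prop:holomorphic-extension}.

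\textbf{Step 1 (uniform invertibility).} Set $\bz=(\bz_{\mathfrak J},\bz_{\mathfrak J^c})$ with $\bz_{\mathfrak J}\in\bbU^{N_0}_r$ and $\bz_{\mathfrak J^c}$ in a complex neighborhood of $[-1,1]^{\mathfrak J^c}$. I would check that $\bz\in\bbU$. Take the nearest real point $\by$. The head contributes at most $C_{\mathfrak b}\sum_{j<N_b}|b_j|\,r\le C_\star/4$ by the choice of $r$. The tail contributes at most $C_{\mathfrak b}\sum_{j\ge N_b}|b_j|\,\operatorname{dist}(z_j,[-1,1])$, which is below $C_\star/4$ whenever, say, $\operatorname{dist}(z_j,[-1,1])<1$, because $\sum_{j\ge N_b}\kappa_j\le 1/(2B_1)<1/2$. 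Hence $u_N$ is holomorphic jointly in all variables on this product set, and $\|\mathcal A^{-1}\|\le 2/C_\star$ holds there. The coefficients are then defined by
\[
t_{\bnu}(\bz_{\mathfrak J}):=\frac{1}{\bnu!}\,\partial^{\bnu}_{\by_{\mathfrak J^c}}u_N(\bz_{\mathfrak J},\bm0).
\]
Each $t_{\bnu}$ is holomorphic in $\bz_{\mathfrak J}$, since derivatives of a jointly holomorphic map are holomorphic.

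\textbf{Step 2 (derivative recursion).} Differentiate $\mathcal A(\bz)u_N(\bz)=f$ using the affine structure \eqref{eq:parameter-operator-affine}. For $\bnu\neq0$, evaluated at $\bz_{\mathfrak J^c}=\bm0$, this gives
\[
\mathcal A(\bz_{\mathfrak J},\bm0)\,\partial^{\bnu}u_N=-\sum_{j\in\operatorname{supp}\bnu}\nu_j\,\mathcal A_j'\,\partial^{\bnu-\mathbf e_j}u_N .
\]
The point $(\bz_{\mathfrak J},\bm0)$ lies in $\bbU$, so Step 1 gives $\|\mathcal A^{-1}\|\le 2/C_\star$. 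Combined with $\|\mathcal A_j'\|\le C_{\mathfrak b}|b_j|$ this yields
\[
\|\partial^{\bnu}u_N\|\le\sum_j\nu_j\kappa_j\|\partial^{\bnu-\mathbf e_j}u_N\|.
\]
The base case is $\|u_N(\bz_{\mathfrak J},\bm0)\|\le 2C^f$ by \eqref{eq:energyEstimate}. Induction on $\|\bnu\|_1$ then gives $\|\partial^{\bnu}u_N\|\le 2C^f\|\bnu\|_1!\,\bm\kappa^{\bnu}$. The inductive step uses the multinomial identity $\sum_j \nu_j\,(|\bnu|-1)!/(\bnu-\mathbf e_j)!\cdot\bnu!/\bnu!$ arranged so that $\sum_j\nu_j\cdot\frac{(|\bnu|-1)!}{\bnu!}\bnu!=|\bnu|!$. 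Dividing by $\bnu!$ gives exactly $S_{\bnu}$, uniformly in $\bz_{\mathfrak J}$.

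\textbf{Step 3 (summation).} By the multinomial theorem,
\[
\sum_{\|\bnu\|_1=k}\frac{k!}{\bnu!}(B_1\bm\kappa)^{\bnu}=\Bigl(B_1\sum_{j\in\mathfrak J^c}\kappa_j\Bigr)^k\le B_{\bm\kappa}^k .
\]
Summing the geometric series in $k$ gives $\sum_{\bnu}S_{\bnu}B_1^{\|\bnu\|_1}\le S_*=2C^f/(1-B_{\bm\kappa})$. Since $B_1\ge1$ and $|\by^{\bnu}|\le1$ on the real tail cube, this also yields absolute and uniform convergence of \eqref{eq:partial-taylor-expansion}.

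\textbf{Step 4 (equality of the series with $u_N$).} This is the main obstacle. Absolute summability of the Taylor coefficients does not by itself show that the series equals $u_N$ on the whole cube $[-1,1]^{\mathfrak J^c}$. The tail domain of holomorphy from Step 1 may not contain a polydisc of radius greater than $1$ centered at $0$. I would therefore enlarge the polydisc: for $|z_j|\le\rho_j$ with $\rho_j>1$, the tail perturbation from the origin is at most $C_{\mathfrak b}\sum|b_j|\rho_j$. Taking $\rho_j=2$, say, this stays below $C_\star/2\cdot B_1^{-1}\cdot 2\le C_\star/4$, using $B_1=2e>2$ and $\sum\kappa_j\le 1/(2B_1)$. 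Adding the head contribution of at most $C_\star/4$, the point stays in $\bbU$ relative to the real point $(\mathrm{Re}\,\bz_{\mathfrak J},\bm0)$. So $u_N(\bz_{\mathfrak J},\cdot)$ is holomorphic on a polydisc of radius $2$ in finitely many variables. Its Taylor series then converges to it on the closed unit polydisc, which proves the identity.
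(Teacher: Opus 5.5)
Your proposal is correct and follows essentially the same route as the paper. The paper's Step~1 proves $\bbU^{N_0}_r\times\{\|\bz_{\mathfrak J^c}\|_\infty<2\}\subset\bbU$ directly, which is your Step~4 and makes your dist$\,<1$ neighborhood unnecessary; it then defines $t_{\bnu}$ by tail derivatives at $\bm0$ and gets $S_{\bnu}$ from the same recursion, induction, and multinomial sum. One small fix: in Step~4 the reference real point must be the projection of $\bz_{\mathfrak J}$ onto $[-1,1]^{\mathfrak J}$, as in your Step~1, not $\operatorname{Re}\bz_{\mathfrak J}$, which can lie outside the cube.
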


\begin{proof}
We apply Proposition~\ref{prop:holomorphic-extension}
on a suitable product domain and then estimate the
Taylor coefficients.

\textbf{Step 1. A product domain of holomorphy.}
We show that the product domain $\bbU^{N_0}_r\times\{\bz_{\mathfrak{J}^c}:\|\bz_{\mathfrak{J}^c}\|_\infty < 2\}$ is contained in $\bbU$.
For $\bz_{\mathfrak{J}}\in\bbU^{N_0}_r$ and $\|\bz_{\mathfrak{J}^c}\|_\infty < 2$, choose
$\by_{\mathfrak{J}}\in[-1,1]^{\mathfrak{J}}$ with $|z_j-y_j|<r$ for $j\in \mathfrak{J}$,
and set $\by_{\mathfrak{J}^c}=\bm0$. Using \eqref{U_r_E}, \eqref{eq:ridge-tail-smallness},
and $B_1=2e>2$, we obtain
\[
\begin{aligned}
C_{\mathfrak b}\sum_{j<N}|b_j|\,|z_j-y_j|
\le rC_{\mathfrak b}\sum_{j\in \mathfrak{J}}|b_j|
   +2C_{\mathfrak b}\sum_{j\in \mathfrak{J}^c}|b_j| \le\frac{C_{\star}}{4}+C_{\star}\sum_{j\in \mathfrak{J}^c}\kappa_j
\le\frac{C_{\star}}{4}+\frac{C_{\star}}{2B_1}<\frac{C_{\star}}{2}.
\end{aligned}
\]
Thus
$\bbU^{N_0}_r\times\{\bz_{\mathfrak{J}^c}:\|\bz_{\mathfrak{J}^c}\|_\infty < 2\}\subset\bbU$.
Proposition~\ref{prop:holomorphic-extension} supplies the
holomorphic solution on $\bbU$, with
$\|u_N\|_{\mathbb V_{\C}}\le2C^f$ on this product set.
Define $\mathcal A_{\mathfrak{J}}(\bz_{\mathfrak{J}}):=\mathcal A(\bz_{\mathfrak{J}},\bm0)$.
By the same proposition, $\mathcal A_{\mathfrak{J}}^{-1}$ is holomorphic on
$\bbU^{N_0}_r$ and $\|\mathcal A_{\mathfrak{J}}^{-1}\|_{
L(\mathbb V_{\C}',\mathbb V_{\C})}\le2/C_{\star}$ there.

\noindent\textbf{Step 2. Partial Taylor expansion.}
For $\bz_{\mathfrak{J}}\in\bbU^{N_0}_r$,  define
\[
t_{\bnu}(\bz_{\mathfrak{J}})
:=\frac1{\bnu!}
\partial_{\bz_{\mathfrak{J}^c}}^{\bnu}u_N(\bz_{\mathfrak{J}},\bm0),
\qquad \bnu\in\N_0^{\mathfrak{J}^c}.
\]
Since $u_N$ is jointly holomorphic in $(\bz_{\mathfrak{J}},\bz_{\mathfrak{J}^c})$,
its partial derivatives in $\bz_{\mathfrak{J}^c}$ remain holomorphic.
Evaluating these derivatives at $\bz_{\mathfrak{J}^c}=\bm0$ shows that
each $t_{\bnu}$ is holomorphic on $\bbU^{N_0}_r$.
By Step~1, the Taylor expansion in the tail variables
about $\bz_{\mathfrak{J}^c}=\bm0$ represents $u_N$ for
$\|\bz_{\mathfrak{J}^c}\|_\infty<2$, i.e., 
\[
u_N(\bz_{\mathfrak{J}},\by_{\mathfrak{J}^c})
=\sum_{\bnu\in\N_0^{\mathfrak{J}^c}}
t_{\bnu}(\bz_{\mathfrak{J}})\by_{\mathfrak{J}^c}^{\bnu},
\qquad
\bz_{\mathfrak{J}}\in\bbU^{N_0}_r,\quad \by_{\mathfrak{J}^c}\in[-1,1]^{\mathfrak{J}^c}.
\]

\noindent\textbf{Step 3. Coefficient estimates.}
By \eqref{eq:parameter-operator-affine}, the solution satisfies
\[
\left(\mathcal A_{\mathfrak{J}}(\bz_{\mathfrak{J}})+\sum_{j\in \mathfrak{J}^c}z_j\mathcal A_j'\right)
u_N(\bz_{\mathfrak{J}},\bz_{\mathfrak{J}^c})=f,
\]
where, for $j\in \mathfrak{J}^c$,
$\mathcal A_j'=\left.\partial_{z_j}\mathcal A(\bz_{\mathfrak{J}},\bz_{\mathfrak{J}^c})\right|_{\bz_{\mathfrak{J}^c}=\bm0}$
is the parameter-independent operator in \eqref{eq:parameter-operator-affine}.
Applying $\partial_{\bz_{\mathfrak{J}^c}}^{\bnu}$ to the operator equation,
evaluating at $\bz_{\mathfrak{J}^c}=\bm0$, and dividing by $\bnu!$ gives
\[
t_{\bm0}=\mathcal A_{\mathfrak{J}}^{-1}f,
\qquad
t_{\bnu}
=-\mathcal A_{\mathfrak{J}}^{-1}\sum_{j\in \mathfrak{J}^c:\nu_j>0}\mathcal A_j't_{\bnu-\mathbf e_j},
\qquad \bnu\ne\bm0,
\]
where $\mathbf e_j$ is the unit multi-index corresponding to $j$.
The bounds in Proposition~\ref{prop:holomorphic-extension} imply
$\sup_{\bz_{\mathfrak{J}}\in\bbU^{N_0}_r}\|\mathcal A_{\mathfrak{J}}(\bz_{\mathfrak{J}})^{-1}\mathcal A_j'\|_{
L(\mathbb V_{\C},\mathbb V_{\C})}\le \kappa_j$ and
$\|t_{\bm0}\|_{L^\infty(\bbU^{N_0}_r;\mathbb V_{\C})}\le2C^f$.
Induction on $m=\|\bnu\|_1\ge1$ therefore gives
\[
\begin{aligned}
\|t_{\bnu}\|_{L^\infty(\bbU^{N_0}_r;\mathbb V_{\C})}
&\le\sum_{j\in \mathfrak{J}^c:\nu_j>0}\kappa_j
\|t_{\bnu-\mathbf e_j}\|_{L^\infty(\bbU^{N_0}_r;\mathbb V_{\C})}\\
&\le2C^f\sum_{j\in \mathfrak{J}^c:\nu_j>0}\kappa_j
\frac{(m-1)!}{(\bnu-\mathbf e_j)!}
\bm\kappa^{\bnu-\mathbf e_j}
=2C^f\frac{m!}{\bnu!}\bm\kappa^{\bnu}
=S_{\bnu},
\end{aligned}
\]
where the equality uses $\sum_{j\in \mathfrak{J}^c}\nu_j=m$.
For $\by_{\mathfrak{J}}\in[-1,1]^{\mathfrak{J}}$,  the recurrence also shows that
$t_{\bnu}(\by_{\mathfrak{J}})\in \mathbb V$.
Finally, the multinomial formula yields
\begin{equation}
\label{eq:summability}
\sum_{\bnu\in\N_0^{\mathfrak{J}^c}}S_{\bnu}B_1^{\|\bnu\|_1}
=2C^f\sum_{m=0}^{\infty}
\left(B_1\sum_{j\in \mathfrak{J}^c}\kappa_j\right)^m  \le\frac{2C^f}{1-B_{\bm\kappa}}=S_*.
\end{equation}
This proves \eqref{eq:tail-taylor-coeff-bound} with a bound
independent of $N$.
Since $B_1\ge1$ and $|\by_{\mathfrak{J}^c}^{\bnu}|\le1$ on
$[-1,1]^{\mathfrak{J}^c}$, the weighted summability estimate~\eqref{eq:summability} also proves
absolute uniform convergence of
\eqref{eq:partial-taylor-expansion} on
$\bbU^{N_0}_r\times[-1,1]^{\mathfrak{J}^c}$.

\end{proof}

\subsection{Proof of Theorem~\ref{the:ridge-map-barron}}
\label{ssec:proof-barron}

We combine the partial Taylor expansion~\eqref{eq:partial-taylor-expansion} in
Lemma~\ref{lem:tail-taylor-coeff} with the Fourier moment estimates
from Section~\ref{sec:harmonic}. The bound $N_0\le N_b$ and the
weighted coefficient estimate allow us to sum the extended Taylor
terms and obtain a Barron norm bound uniform in $N$. 

\begin{proof}
{Continuity of $\Gcal^{\dagger}$ 
follows from Lemma~\ref{lem:G_continuity}.}  
Fix $N\ge1$ and $\tau>0$, and retain the notation of
Subsection~\ref{ssec:Taylor}.

\noindent\textbf{Step 1. Extensions.}
Recall the partial Taylor expansion \eqref{eq:partial-taylor-expansion}.
\begin{equation}
u_N(\bz_{\mathfrak{J}},\by_{\mathfrak{J}^c})
=\sum_{\bnu\in\N_0^{\mathfrak{J}^c}}t_{\bnu}(\bz_{\mathfrak{J}})\by_{\mathfrak{J}^c}^{\bnu}.
\end{equation}
For  the holomorphic coefficient functions $t_{\bnu}(\bz_{\mathfrak{J}})$ on $\bbU^{N_0}_r$ defined in~\eqref{U_r_E},  Lemma~\ref{lem:uniform-head-moments} with order $\tau$ provides
compactly supported extensions $t_{e}^{\bnu}\in C^{\lceil\tau\rceil+N_0+1}(\R^{N_0};\mathbb V_{\C})$ satisfying
\[
M_s(t_{e}^{\bnu})\le D_1(D_2q)^{s+N_0} \|t_{\bnu}\|_{L^\infty(\bbU^{N_0}_r;\mathbb V_{\C})} \leq    D_1(D_2 q)^{s+N_b} S_{\bnu},
\qquad 0\le s\le\tau,
\]
where $\delta:=r/4$, $q:=\lceil\tau\rceil+N_b+1$, $D_1:=2[4(1+\delta)]^{N_b}$ and $D_2:=N_b/\delta$.
Here we have enlarged the bounds using
$N_0\le N_b$, $\lceil\tau\rceil+N_0+1\le q$,
and $D_2q\ge1$.
For the tail monomials, Lemma~\ref{lem:uniform-tail-moments} gives
integrable extensions $g_e^{\bnu} : \R^{|\mathfrak{J}^c|} \to \R$ satisfying
\[
M_s(g_e^{\bnu})
\le B_2B_3^s(\tau+4)^{s+1}B_1^{\|\bnu\|_1},
\qquad 0\le s\le\tau.
\]
If $\mathfrak{J}^c=\varnothing$, the expansion contains only
$t_{\bm0}=u_N$. Take $g_e^{\bm0}=1$ on $\R^0$,
with $\|g_e^{\bm0}\|_{L^1}=M_0(g_e^{\bm0})=1$
and $M_s(g_e^{\bm0})=0$ for $s>0$.
Then $u_{N,e}=t_e^{\bm0}$, and the estimates below remain valid.

\noindent\textbf{Step 2. Product moments.}
Set $H_{e}^{\bnu}(\by_{\mathfrak{J}},\by_{\mathfrak{J}^c})
:=t_{e}^{\bnu}(\by_{\mathfrak{J}})g_e^{\bnu}(\by_{\mathfrak{J}^c})$.
The moment estimates in the proof of
Lemma~\ref{lem:compatible-product} yield
\begin{equation}\label{eq:proof-product-moment-bound}
\begin{aligned}
&M_0(H_{e}^{\bnu})+M_\tau(H_{e}^{\bnu})
\le M_0(t_e^{\bnu})M_0(g_e^{\bnu}) + 2^{(\tau-1)_+} \bigl[M_\tau(t_e^{\bnu})M_0(g_e^{\bnu})+M_0(t_e^{\bnu})M_\tau(g_e^{\bnu})\bigr] \\
\le & D_1B_2S_{\bnu}B_1^{\|\bnu\|_1}
(D_2q)^{N_b}(\tau+4)  \Bigl[1+2^{(\tau-1)_{+}}(D_2 q)^\tau
+2^{(\tau-1)_{+}}[B_3(\tau+4)]^\tau\Bigr]\\
\le & C_*S_{\bnu}B_1^{\|\bnu\|_1}
(\tau+N_b+4)^{N_b + 1}\bigl(D_* (\tau+N_b+4) \bigr)^\tau,
\end{aligned}
\end{equation}
where $C_*:=2D_1B_2D_2^{N_b}$ and $D_*:=2\max\{D_2,B_3,1\}$.

\noindent\textbf{Step 3. Summation.}
Using the cutoff construction for $t_e^{\bnu}$
underlying \eqref{eq:he_L1} and the estimate for
$g_e^{\bnu}$ in \eqref{eq:polarization-monomial-L1},
we obtain
\[
\|t_{e}^{\bnu}\|_{L^1(\R^{N_0};\mathbb V_{\C})}
\le[2(1+\delta)]^{N_b}S_{\bnu},\qquad
\|g_e^{\bnu}\|_{L^1(\R^{|\mathfrak{J}^c|})}
\le \|g_e^{\bm0}\|_{L^1(\R^{|\mathfrak{J}^c|})} B_1^{\|\bnu\|_1}.
\]
The first bound follows from the cutoff construction,
$N_0\le N_b$, and
$\|t_{\bnu}\|_{L^\infty(\bbU^{N_0}_r;\mathbb V_{\C})}\le S_{\bnu}$.

Since $H_e^{\bnu}$ is a product in disjoint variables,
these estimates and \eqref{eq:tail-taylor-coeff-bound} give
\begin{equation}
\begin{split}
\sum_{\bnu\in\N_0^{\mathfrak{J}^c}}\|H_{e}^{\bnu}\|_{L^1(\R^N;\mathbb V_{\C})}
&\le \sum_{\bnu\in\N_0^{\mathfrak{J}^c}} [2(1+\delta)]^{N_b}\|g_e^{\bm0}\|_{L^1(\R^{|\mathfrak{J}^c|})} S_{\bnu} B_1^{\|\bnu\|_1} \\
&\le[2(1+\delta)]^{N_b}\| g_e^{\bm0}\|_{L^1(\R^{|\mathfrak{J}^c|})} S_*<\infty.
\end{split}
\end{equation}
Hence the series defining  $u_{N,e}:=\sum_{\bnu}H_{e}^{\bnu}$ converges absolutely in $L^1(\R^{N};\mathbb V_{\C})$.  On $U_N$, each summand agrees with the corresponding
Taylor term, so \eqref{eq:partial-taylor-expansion}
gives $u_{N,e}=u_N$. 
Termwise Fourier transformation, the triangle inequality,
and Tonelli's theorem, together with
\eqref{eq:proof-product-moment-bound} and
\eqref{eq:tail-taylor-coeff-bound}, yield
\begin{equation}\label{eq:proof-total-moment-bound}
\begin{split}
M_0(u_{N,e})+M_\tau(u_{N,e}) &\leq 
\sum_{\bnu\in\N_0^{\mathfrak{J}^c}} [M_0(H_{e}^{\bnu})+M_\tau(H_{e}^{\bnu})]  \\
&\le C_*S_{*}(\tau+N_b+4)^{N_b + 1}\bigl(D_* (\tau+N_b+4) \bigr)^\tau <\infty.
\end{split}
\end{equation}
Thus $u_{N,e}$ is an admissible extension.
By \eqref{eq:Fourier_moment_bound}, we obtain
\begin{equation} 
\begin{split}
 \|u_N\|_{\mathbb B_\tau(U_N;\mathbb V)}
\le M_0(u_{N,e})+M_\tau(u_{N,e}) \le  C_{V}[D_{B}(\tau+4)]^\tau,
\end{split}
\end{equation}
where  $D_{B}
:=eD_*(N_b+1)$ and $ C_{V}
:=\max\left\{1,C_*S_*(N_b+4)^{N_b+1}\right\}$.
Here we used  $(\tau+N_b+4)^{N_b+1}
\le (N_b+4)^{N_b+1}
e^\tau$ ,
and $\tau+N_b+4\le(N_b+1)(\tau+4)$.

\noindent\textbf{Step 4. Operator bound.}
The estimate is uniform in $N$. Applying the embedding
$\mathbb V\hookrightarrow \mathbb H$ to admissible extensions and taking
infima, followed by the supremum over $N$, gives
\[
\begin{aligned}
\|\mathcal G^\dagger\|_{\mathbb B_\tau(\mathbb B^{\bb};\mathbb H)}
&\le C_S
\|\mathcal G^\dagger\|_{\mathbb B_\tau(\mathbb B^{\bb};\mathbb V)} =C_S\sup_{N\ge1}\|u_N\|_{\mathbb B_\tau(U_N;\mathbb V)}
\le C_B[D_{B}(\tau+4)]^\tau,
\end{aligned}
\]
where we used $u_N=\mathcal G^\dagger_N$ and set
$ C_B:=\max\{1,C_S C_{V}\}$.
This proves \eqref{eq:ridge-map-Gamma}.

\end{proof}

%% file: Chapters/4_Error.tex

\section{Approximation error estimates}
\label{sec:Error}

In this section, we derive approximation error estimates for an FNO-type spectral neural operator approximating the coefficient-to-solution operator $\mathcal{G}^{\dag}$.
Without loss of generality, in the following discussion, we assume in addition that $b_j \neq 0$ for all $j\in\mathbb N_0$;
otherwise, zero coefficients may be discarded.
In addition, we assume that both $\mathbb{V}$ and $\mathbb{H}$ are spaces of scalar-valued functions; 
the extension to vector spaces follows from exactly the similar argument.

Our construction combines the input parameter extraction, the neural approximation of the truncated finite-dimensional coefficient-to-solution map, 
and the spatial reconstruction at the final stage, which also decomposes the total error into three components. 
Finally, Barron regularity of arbitrary order enables us
to optimize the order with respect to network width,
yielding a nearly exponential convergence rate.

To show how the operator Barron regularity provides the structure needed for quantitative approximation bounds, for simplicity, 
we work on the periodic cell $\Omega := Q_1 =(-1/2,1/2)^d$ and then choose the input basis $\{\psi_j\}_{j\ge0}$ from \eqref{eq_basis}, which satisfies
\[
\langle\psi_j,\psi_m\rangle
:=
\int_{\Omega}\psi_j(x)\psi_m(x)\,\dd x
=
\widetilde{\nu}_j\delta_{jm},
\qquad
\widetilde{\nu}_j:=
\begin{cases}
1,&j=0,\\
\frac12,&j\ne0.
\end{cases}
\]
This periodic trigonometric system is only used for the input representation,
while the solution space $\mathbb V$, including its boundary conditions, remains as specified above.

Define $\mathcal{F}$ and $\mathcal{F}^{-1}$ as the classical Fourier transform and its inverse on this periodic domain $Q_L$, respectively, i.e.,
$
\left(\mathcal{F} v \right)(\xi)
=\int_{\Omega}v(x)e^{-2\pi i\xi\cdot x}\,\dd x,
$ for $\xi\in\mathbb Z^d$.
We also write $\widehat{v}(\xi) = \mathcal{F}(v)(\xi)$
for the periodic Fourier coefficients.
Given a set of Fourier coefficients $\{\widehat{v}(\xi)\}_{\xi \in \mathbb Z^d}$, the inverse Fourier transform $\mathcal{F}^{-1}$ is defined as
\((\mathcal F^{-1} \widehat{v})(x)
=\sum_{\xi\in\mathbb Z^d}
\widehat{v}(\xi) e^{2\pi i\xi\cdot x}.\)

Following~\cite{li2021fourierneuraloperatorparametric}, we consider an FNO-type $\mathcal N:\mathbb B^\bb(\Omega)\to \mathbb V$ for approximating the coefficient-to-solution operator $\mathcal G^\dagger$. It lifts the scalar input $g$ to a feature field, applies a sequence of nonlinear transformations, and projects the resulting features onto a scalar output. For a prescribed depth $\depth\in\mathbb N$, this construction takes the form
$
\Ncal(g)
=
\bigl(\mathcal{}
\mathcal{Q}\circ\mathcal{L}_{\depth}\circ\mathcal{L}_{\depth-1}
\circ\cdots\circ\mathcal{L}_1\circ\mathcal{R}
\bigr)(g).
$
The lifting map acts pointwise as
\[
\mathcal{R}(g)(x)=R g(x) + \bm{\beta}_R,
\qquad
R,\bm{\beta}_R\in\mathbb{R}^{d_1}.
\]
For a $d_{\depth+1}$-channel feature field $v$, the output projection is defined pointwise by
\[
\mathcal{Q}(v)(x)=Q v(x) + \beta_Q,
\qquad
Q\in\mathbb{R}^{1\times d_{\depth+1}},
\quad \beta_Q\in\mathbb{R}.
\]
For each $j=1,\ldots,\depth$, the proposed FNO block reads
\begin{equation}
\label{eq:FourierLayerMatrix}
\mathcal{L}_j(v)(x)
=
W_j^{(2)}
\sigma_{j}\left(
W_j^{(1)}v(x)+\bm{\beta}_j(x)
+
\mathcal{F}^{-1}\bigl(
P_j(\xi) \cdot \mathcal{F}(v)(\xi)
\bigr)(x)
\right).
\end{equation}
The matrices $W_j^{(1)}\in\mathbb{R}^{\tilde{d}_j\times d_j}$, $W_j^{(2)} \in\mathbb{R}^{d_{j+1}\times \tilde{d}_j} $ and the bias function $\bm{\beta}_j(x) \in \mathbb{R}^{\tilde{d}_j}$ parameterize the pointwise affine operations.
The matrix-valued Fourier multiplier $P_j:\mathbb Z^d\to\mathbb C^{\tilde d_j\times d_j}$ has finite support and satisfies $P_j(-\xi)=\overline{P_j(\xi)}$,
so that the Fourier term is real valued for real-valued inputs.
In particular, if $P_j(\xi)=0$ for $\xi\ne\bm0$, the Fourier term is independent of $x$.

\subsection{Network construction}

The proposed network has an encoder-decoder structure.
Recall $g = a - \bar{a}\in \mathbb{B}^{\bb}$, $g_N = a_N - \bar{a} \in \mathbb{B}^{\bb}_N$,
where $\mathbb{B}^{\bb}_N$ is the parametric set obtained by retaining the first $N$ modes in $\mathbb{B}^{\bb}$.
Define the input encoder and decoder by
\[
\mathcal E_N:\mathbb B^{\mathbf b}\to U_N,
~~~
\mathcal E_N\bigl(g(\cdot;\mathbf y)\bigr)=\mathbf y_N;
\qquad
\mathcal D_N: U_N \to \mathbb B^{\bb}_N,
~~~
\mathcal D_N(\mathbf y_N)= g_N.
\]
In particular, $\mathcal{D}_N = \Phib\circ\mathcal P_N^\top$.
For the solution space $\mathbb V$, recall $\mathbb V_M:=\operatorname{span}\{e_1,\ldots,e_M\}\subset \mathbb V$, define the encoder and decoder by
\[
\begin{aligned}
\mathcal E_M:\mathbb V\to\mathbb R^M,
\qquad
\mathcal E_M(v)
&:=\bigl(\langle v,e_1\rangle_{\mathbb H},\cdots,\langle v,e_M\rangle_{\mathbb H}\bigr)^\top,\\
\mathcal D_M:\mathbb R^M\to \mathbb V_M,
\qquad
\mathcal D_M(\mathbf c)
&:=\sum_{m=1}^M c_m e_m.
\end{aligned}
\]
{We distinguish the input and output encoders and decoders by the subscripts $N$ and $M$.}
In addition, let $\Pi_N:\mathbb B^{\bb}\to\mathbb B^{\bb}_N$ be the input truncation operator and $g_N=\Pi_N g$. 
Again, this is distinguished from $\Pi_M$ in \eqref{eq_VM_proj} by subscripts to keep notation simple.

{The remaining component maps the retained input parameters to the output coefficients in \eqref{eq_Gdecomp} and \eqref{eq:solution-spectral-expansion}}.
To facilitate the discussion, we define
\begin{equation}
\label{F_definition}
\mathbf{F}_{N,M}:U_N\to\mathbb R^M, \qquad
\mathbf{F}_{N,M}(\mathbf y_N)
:=
\bigl(F_{1,N}(\mathbf y_N),\ldots,F_{M,N}(\mathbf y_N)\bigr)^\top.
\end{equation}
Combining these maps and using \eqref{eq_operator_decomp}, 
we obtain the decomposition underlying the network construction:
\begin{equation}
\label{eq_decompo}
\mathcal D_M\circ\mathbf{F}_{N,M}\circ\mathcal E_N
=
\Pi_M\circ\mathcal G^{\dagger}_N\circ\mathcal E_N
=
\Pi_M\circ\mathcal G^\dagger\circ\Phib\circ\mathcal P_N^{\top}\circ\mathcal E_N.
\end{equation}
{We next construct FNO-type networks that realize $\mathcal E_N$ and $\mathcal D_M$ exactly and approximate the coefficient map $\mathbf F_{N,M}$. The construction of feature extraction layers is similar to \cite[Lemma~7]{2021Kovachki}, 
{but errors from numerical evaluation of the Fourier coefficients are not included here.}}
\begin{figure}[htbp]
\centering
\begin{tikzcd}[row sep=5em, column sep=5em]
\mathbb B^{\bb}
  \arrow[r, "\mathcal E_N"]
  \arrow[d,
    "\mathcal G_N^\dagger\circ\mathcal E_N"']
  \arrow[rr, bend left=30, "\Pi_N"]
& \scalebox{0.85}{$U_N$}
  \arrow[r, "\mathcal{D}_N"]
  \arrow[d, "\mathbf{F}_{N,M}"]
  \arrow[dl, "\mathcal G_N^\dagger", pos=0.48]
& \mathbb B^{\bb}_N
  \arrow[d, "\Pi_M\circ\mathcal G^\dagger"]
\\
\mathbb V
  \arrow[r, "\mathcal E_M"']
  \arrow[rr, bend right=30, "\Pi_M"']
& \scalebox{0.85}{$\mathbb R^M$}
  \arrow[r, "\mathcal D_M"']
& \mathbb V_M
\end{tikzcd}
\caption{Encoder-decoder decomposition of the solution operator.}
\label{fig:encoder-decoder-decomposition}
\end{figure}

\begin{lemma}
\label{lem:exact-fno-parameter-extraction}
There exists an FNO-type network \(\widetilde{\mathcal E}_N = \mathcal{L}_3 \circ \mathcal{L}_2 \circ \mathcal{L}_1 \circ \mathcal{R}: \mathbb B^{\bb}\to U_N\) 
with width \(4 N\), 
such that, for every $g(x;\mathbf y)=\sum_{j\in\mathbb N_0}b_jy_j\psi_j(x) $, 
\begin{equation*}
\widetilde{\mathcal E}_N\left( g \right)
=\mathcal E_{N}(g)
{=\mathcal P_N \by = \by_N,}
\quad \by_N \in U_N.
\end{equation*}
\end{lemma}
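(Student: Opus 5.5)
The idea is to recover each coefficient $y_j$, $j<N$, by taking the $L^2(\Omega)$ inner product of $g$ with $\psi_j$ and dividing by $b_j\widetilde\nu_j$. Since $\langle\psi_j,\psi_m\rangle=\widetilde\nu_j\delta_{jm}$ and the series for $g$ converges in $L^\infty$ (hence in $L^2$), we have
\[
\langle g,\psi_j\rangle = \widetilde\nu_j b_j y_j .
\]
Because $b_j\neq0$ by the standing assumption, $y_j=\langle g,\psi_j\rangle/(\widetilde\nu_j b_j)$. Each such inner product is a single Fourier coefficient of $g$. For $j=2k-1$ or $j=2k$, write $\bk=\bk(j)$ (with $L=1$); then $\langle g,\psi_{2k-1}\rangle=\operatorname{Re}\widehat g(\bk)$ and $\langle g,\psi_{2k}\rangle=\pm\operatorname{Im}\widehat g(\bk)$, with the sign fixed by the convention $\bk(2j-1)=-\bk(2j)$. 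For $j=0$ it is $\widehat g(\bm 0)$. So the task reduces to showing that a few FNO layers realize a linear map $g\mapsto\by_N$, with output constant in $x$, using only the Fourier term with multipliers supported on finitely many modes, while the activations act as the identity on the relevant range.

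Concretely, I would first lift with $\mathcal R(g)=R g$, where $R$ copies $g$ into channels (a few copies for convenience) and $\bm\beta_R=0$. The first block $\mathcal L_1$ uses $W_1^{(1)}=0$ and $\bm\beta_1=0$, together with a Fourier multiplier $P_1$ supported on $\{\pm\bk(j):j<N\}$. Its $2j$-th and $(2j+1)$-th output channels produce the functions
\[
x\mapsto \langle g,\psi_j\rangle\psi_j(x)/\widetilde\nu_j
\quad\text{and}\quad
x\mapsto \langle g,\psi_j\rangle\psi_j^\perp(x)/\widetilde\nu_j ,
\]
where $\psi_j^\perp$ is the conjugate trigonometric mode. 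These are the real projections onto the span of $\cos$ and $\sin$ at mode $\bk(j)$, and the choices satisfy $P(-\xi)=\overline{P(\xi)}$. The second block multiplies pointwise and integrates; the cleanest route is instead to read off the coefficient directly as a constant. Choose $P_2(\xi)$ supported only at $\xi=\bm0$, acting on the product-free channels, to extract the mean of $\psi_j\cdot(\text{projection})$. The product is the sticking point, because FNO layers are linear apart from $\sigma$.

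To avoid needing products, I would instead shift frequencies with the bias $\bm\beta_j(x)$. Note that $\sigma(v+\beta)$ with $\sigma=\mathrm{ReLU}$ equals $v+\beta$ whenever $v+\beta\ge0$, and $v$ is bounded by $\|\bb\|_{\ell^1}$. The construction then uses the identity $\sigma(t+c)-c=t$ for $c\ge\|\bb\|_{\ell^1}$, which implements the identity map with one layer of width doubled, giving the count $4N$. The actual mode-to-constant map is then done linearly: in $\mathcal L_2$, a multiplier on channel $2j$ evaluated through $\mathcal F^{-1}$ at $\xi=\bm0$ cannot pick up mode $\bk(j)$. So the extraction must happen in $\mathcal L_1$ itself, by letting $P_1$ map mode $\bk(j)$ of the input channel to mode $\bm0$ of output channel $j$. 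This is not diagonal in $\xi$, so it fails the given form.

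The fallback, which I expect to be the true route and the main obstacle, uses the $x$-dependent bias and pointwise weights. Demodulation is done by multiplying by $\psi_j(x)$, which is impossible with constant $W$. However, $\sigma$ applied to the sum of a shifted $g$-projection and the bias $\beta(x)=c\,\psi_j(x)$ yields products through $\mathrm{ReLU}$ combinations only approximately. Hence I would allow $\sigma_j$ to vary per layer, including the quadratic activation $\sigma(t)=t^2$, and use polarization:
\[
\tfrac14\bigl[(p_j+\psi_j)^2-(p_j-\psi_j)^2\bigr]=p_j\psi_j .
\]
Here $p_j$ is the mode projection from $\mathcal L_1$, and $\pm\psi_j$ enter as the bias $\bm\beta_2(x)$. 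This uses $4$ channels per index $j$, giving width $4N$. Then $\mathcal L_3$, with $P_3$ supported at $\xi=\bm0$ and a scaling by $1/(b_j\widetilde\nu_j)$, averages over $\Omega$ to produce the constant $y_j$. Its identity activation, realized as $\sigma(t+c)-c$, outputs $\by_N$ exactly. The remaining check is the orthogonality bookkeeping and the exact width count.
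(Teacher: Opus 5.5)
Your final route is in substance the paper's construction. You form the product of a filtered copy of $g$ with $\psi_j$ by polarization through a squaring-type activation, with $\pm\psi_j$ supplied by the $x$-dependent bias. You then integrate over $\Omega$ with a multiplier supported at $\xi=\bm 0$ and rescale by $\Gamma^{-1}$, where $\Gamma=\operatorname{diag}(\widetilde\nu_j b_j)$.

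The paper packs the same steps more tightly. $\psi$ enters as the bias of the same block $\mathcal L_1$ that applies the low-pass multiplier, so $\mathrm{ReLU}^2$ with $\sigma(t)+\sigma(-t)=t^2$ already produces $\widetilde g_N\psi$ in $\mathcal L_1$; this uses four channels per index, hence width $4N$. The polarization matrix $D$ is folded into the zero-mode multiplier $P_2$. Because the activation of $\mathcal L_2$ acts after the integration, the constant $\Gamma\by_N$ is shifted by $\pm1$, squared, and recovered in $\mathcal L_3$ by a second polarization $T$ that also carries $\Gamma^{-1}$. The last ReLU is then the identity on these nonnegative constants.

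Two steps fail as you wrote them, though both are easy to repair.

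\emph{The first-layer projections.} No multiplier in $\mathcal L_1$ can output $\langle g,\psi_j\rangle\psi_j/\widetilde\nu_j$ for $j\ge1$. $P_1(\xi)$ acts frequency by frequency and complex-linearly, and every lifted channel is a real multiple of $g$ plus a constant. So each output channel has coefficient $c\,\widehat g(\bk(j))$ at $\bk(j)$, for a fixed $c\in\C$. The cosine-only (or sine-only) projection instead needs $\operatorname{Re}\widehat g(\bk(j))$ (or $\operatorname{Im}$), which is not complex-linear. Equivalently, multiplier operators commute with translations and this projection does not. This is harmless, because the polarization step only needs some $p$ with $\int_\Omega p\,\psi_j=\langle g,\psi_j\rangle$. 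The full band projection onto $\{\pm\bk(j)\}$, the paper's $\widetilde g_N$, or $g$ itself all qualify by orthogonality.

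\emph{The final identity activation.} Writing it as $\sigma(t+c)-c$ needs an additive bias after $W_3^{(2)}$, which the block \eqref{eq:FourierLayerMatrix} does not have. Since $\mathcal L_3$ is the last block of $\widetilde{\mathcal E}_N$, no later bias can absorb $-c$. (In $\mathcal L_1$ the shift can be absorbed into $\bm\beta_2$.) Use $\mathrm{ReLU}(t)-\mathrm{ReLU}(-t)=t$ with $2N$ channels, or the paper's shifted-square device.

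With these repairs your three-block construction works within width $4N$.
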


\begin{proof}
To describe the construction,
we introduce the element-wise product \(\odot\) and the Kronecker product \(\otimes\). We use the following notation throughout the proof.
\begin{equation}
\label{eq:s1s2}
\begin{alignedat}{2}
I_N
&= \text{size-$N$ identity matrix},
&\qquad \mathbf{1}_N
&= \text{size-$N$ column vector of ones}, \\
\mathbf{s}_1
&:= (1,-1,1,-1)^\top \in \mathbb{R}^4,
&\qquad \mathbf{s}_2
&:= (1,-1,-1,1)^\top \in \mathbb{R}^4, \\
\Gamma
&:= \operatorname{diag}
\bigl(\widetilde{\nu}_0 b_0,\ldots,
\widetilde{\nu}_{N-1} b_{N-1}\bigr),
&\qquad \boldsymbol{\psi}
&:= (\psi_0,\ldots,\psi_{N-1})^\top.
\end{alignedat}
\end{equation}

For any $g(x;\mathbf y) \in \mathbb B^{\bb}(\Om)$, the goal is to recover
\(\mathbf y_N=(y_0,\ldots,y_{N-1})^\top\).
For \(0\leq i<N\), the orthogonality of trigonometric functions yields
\begin{equation}
\label{eq:extractor}
\langle g,\psi_i\rangle_{L^2(\Omega)}
=\sum_{j\ge0}b_jy_j\langle\psi_j,\psi_{i}\rangle
=b_{i}\widetilde{\nu}_{i}y_{i}.  
\end{equation}
Thus, the extraction formula \eqref{eq:extractor} can be written in matrix form $\langle g,\boldsymbol\psi\rangle=\Gamma \mathbf y_N$. Since \(b_i \neq 0\) and \(\widetilde{\nu}_i > 0\), \(\Gamma^{-1}\) exists. The next step is to implement these inner products $ \ip{g}{\psi_i}$ and the multiplication by \(\Gamma^{-1}\) using Fourier layers and activation functions.
The layer parameters are listed in Table \ref{tab:exact-parameter-extraction}. 
{All activations act componentwise, and $\operatorname{ReLU}^2(t):=(\max\{t,0\})^2$.}
\begin{table}[htbp]
\centering
\caption{Layer parameters of the exact parameter extractor in Lemma~\ref{lem:exact-fno-parameter-extraction}. 
}
\label{tab:exact-parameter-extraction}
\begingroup
\small
\setlength{\tabcolsep}{4pt}
\renewcommand{\arraystretch}{1.4}
\begin{tabular}{@{}cccccc@{}}
\toprule
Map & $R$ or $W_j^{(1)}$ & Bias & $P_{j}$ & Activation & $W_j^{(2)}$ \\
\midrule
$\mathcal R$ & $\mathbf s_1\otimes\mathbf1_N$ & $0_{4N}$ & --- & --- & --- \\
$\mathcal L_1$ & $0_{4N\times4N}$ & $\mathbf s_2\otimes\boldsymbol\psi(x)$ & $\mathbf1_{\mathbb K_N}(\xi)I_{4N}$ & $\operatorname{ReLU}^2$ & $I_{4N}$ \\
$\mathcal L_2$ & $0_{4N\times4N}$ & $\mathbf s_2\otimes\mathbf1_N$ &
$
\mathbf{1}_{\{\mathbf0\}} (\xi) \left[ (\mathbf s_1\odot\mathbf s_2)\otimes D \right]
$ & $\operatorname{ReLU}^2$ & $I_{4N}$ \\
$\mathcal L_3$ & $I_{4N}$ & $0_{4N}$ & $0_{4N\times4N}$ & $\operatorname{ReLU}$ & $T$ \\
\bottomrule
\end{tabular}
\par\smallskip
\begin{minipage}{\linewidth}
\end{minipage}
\endgroup
\end{table}

We now verify the construction.
The lifting layer $\mathcal{R}$ sends $g(x;\by)$ to $\mathbf{v}_0(x):=\mathcal{R}(g)(x)=\mathbf s_1\otimes(g(x;\by)\mathbf1_N) \in \R^{4N}$. For the first Fourier layer $\mathcal{L}_1$, we define
\(\mathbb K_N=\bigcup_{j=0}^{N-1} \{\mathbf k(j),-\mathbf k(j)\}\) and \(\widetilde g_N(x) =\sum_{\xi\in \mathbb K_N}\widehat g(\xi)
e^{2\pi i \ip{\xi}{x}}\).
{Since the multiplier $P_1$ retains the Fourier modes in
$\mathbb K_N$, we obtain}
\begin{equation}
\mathcal{F}^{-1}[P_1\widehat{\mathbf{v}}_0](x)
=
\sum_{\xi \in \mathbb K_N} \widehat{\mathbf{v}}_0(\xi) e^{2 \pi i \ip{\xi}{x}}
=
\mathbf s_1\otimes\bigl(\widetilde{g}_N(x)\mathbf1_N\bigr).
\end{equation}
The output of the first layer is
\(
\mathbf{v}_1(x):= \mathcal{L}_1(\mathbf{v}_0)(x) = \sigma_1\!\bigl(
\mathbf s_1\otimes(\widetilde g_N(x)\mathbf1_N)
+\mathbf s_2\otimes\boldsymbol\psi(x) \bigr) \in \R^{4N} \). 
The task of the second Fourier layer $\mathcal{L}_2$ is to implement the inner product by preserving the zero Fourier mode. 
Set 
\(D := \frac14
\begin{pmatrix}I_N&I_N&-I_N&-I_N\end{pmatrix}\) 
{in the multiplier $P_2$}. From the identity \(\sigma_1(t)+\sigma_1(-t)=t^2\) for $\textrm{ReLU}^2$, we can obtain
\(
D \mathbf{v}_1(x)
=\tfrac14\!\left[
(\widetilde g_N+\boldsymbol\psi)^2
-(\widetilde g_N-\boldsymbol\psi)^2\right]
=\widetilde g_N\boldsymbol\psi
\). 
Since the Fourier support of each $\psi_i$, $0\le i<N$, is contained in $\mathbb K_N$, Fourier orthogonality gives
\(
\langle \widetilde{g}_N,\boldsymbol{\psi}\rangle=\langle g,\boldsymbol{\psi}\rangle
\). 
By \(
\widehat{\mathbf{v}}_1(0)
=\int_{\Omega}\mathbf{v}_1(x)\,dx
\), we can obtain
\begin{equation}
D\widehat{\mathbf{v}}_1(0)
=\langle\widetilde g_N,\boldsymbol\psi\rangle
=\langle g,\boldsymbol\psi\rangle
=\Gamma\mathbf y_N.
\end{equation}
We obtain
\(
\mathbf{v}_2(x):= \mathcal{L}_2(\mathbf{v}_1)(x)=\sigma_2\!\left(
(\mathbf s_1\odot\mathbf s_2)\otimes(\Gamma\mathbf y_N)
+\mathbf s_2\otimes\mathbf1_N
\right) 
\) and \(\mathbf{v}_2\) is a spatially constant field with nonnegative components. 
Since \(\sigma_3=\operatorname{ReLU}\) and $\mathbf{v}_2$ has nonnegative components,
\(
\sigma_3(\mathbf{v}_2)=\mathbf{v}_2.
\)
The final linear matrix is
\(
T:=
\frac14
\begin{pmatrix}
\Gamma^{-1}&-\Gamma^{-1}&
\Gamma^{-1}&-\Gamma^{-1}
\end{pmatrix}
\) 
satisfying
\begin{equation}
\begin{aligned}
T\mathbf{v}_2
&=\tfrac14\Gamma^{-1}
\big[(\Gamma\mathbf y_N+\mathbf1_N)\odot (\Gamma\mathbf y_N+\mathbf1_N)-(\Gamma\mathbf y_N-\mathbf1_N)\odot (\Gamma\mathbf y_N-\mathbf1_N)\big] \\
&=\Gamma^{-1}(\Gamma\mathbf y_N)=\mathbf y_N.
\end{aligned}
\end{equation}
Thus, \(\mathbf{v}_3 := \mathcal{L}_3(\mathbf{v}_2) = T\mathbf{v}_2 = \by_N\). Let
$\widetilde{\mathcal E}_N:=\mathcal L_3\circ\mathcal L_2\circ\mathcal L_1\circ\mathcal R$.
Identifying the spatially constant output field with its value in $\mathbb R^N$, we obtain $\widetilde{\mathcal E}_N(g)=\mathcal E_N(g)=\mathbf y_N$ exactly.
\end{proof}

Next, we construct the fourth Fourier layer $\mathcal{L}_4$ to  approximate $\mathbf{F}_{N,M}(\by_N)$. 

\begin{lemma}
\label{lem:spectral-coefficient-approximation}
Let \(N\ge1\) be an integer, let \(\tau>0\) and suppose that \(u_N\in\mathbb B_\tau(U_N;\mathbb H)\). Let \(\{e_m\}_{m\ge1}\) be the fixed \(\mathbb H\)-orthonormal basis.
For all integers \(M, K\ge1\), there are scalar cosine networks \(\NN_{m,K}\), each with at most \(K\) hidden units, such that \(\mathbf{N}_K:=(\NN_{1,K},\ldots,\NN_{M,K})^\top\) satisfies
\begin{equation}
\label{eq:NNError}
\|\mathbf{F}_{N,M}-\mathbf{N}_K\|_{L^2(U_N,\rho_N;\mathbb R^M)}
\le C_\tau K^{-\left(\frac12+\frac{\tau}{N}\right)}
\|u_N\|_{\mathbb B_\tau(U_N;\mathbb H)},
\end{equation}
where $\mathbf{F}_{N,M}$ is defined in \eqref{F_definition}, and $C_{\tau}$ is given in \eqref{eq:explicit-BarronApprox-constant}.
\end{lemma}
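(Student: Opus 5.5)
The plan is to reduce the vector statement to scalar Barron approximation for each coefficient $F_{m,N}$, and then sum the per-coefficient errors using the square-summability in Lemma~\ref{lem:vector-barron-embedding}.

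\textbf{Step 1 (scalar approximation).} The central ingredient is a scalar result, presumably Lemma~\ref{lem:explicit-BarronApprox}. It should state that for $F\in\mathbb B_\tau(U_N)$ and every $K\ge1$ there is a cosine network $\NN_K(\by)=c_0+\sum_{k=1}^K a_k\cos(2\pi\langle\xi_k,\by\rangle+\theta_k)$ with
\[
\|F-\NN_K\|_{L^2(U_N,\rho_N)}\le C_\tau K^{-(\frac12+\frac{\tau}{N})}\|F\|_{\mathbb B_\tau(U_N)}.
\]
I would obtain it by following the higher-order argument of \cite{SiegelXu2022HighOrder}. Take an extension $F_e$ whose norm nearly attains the infimum, with $F$ real. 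Write
\[
F(\by)=\int\operatorname{Re}\bigl(\widehat F_e(\xi)e^{2\pi i\langle\xi,\by\rangle}\bigr)\,d\xi .
\]
This displays $F$ as an element of the closed convex hull of the dictionary $\{\cos(2\pi\langle\xi,\cdot\rangle+\theta)/(1+\|\xi\|_1^\tau)\}$, scaled by the Barron norm. The dictionary elements are smooth of order $\tau$ uniformly in the $L^2(U_N)$ sense. A stratified sampling or dyadic covering argument over $\xi$ and $\theta$ then improves Maurey's $K^{-1/2}$ rate by the entropy factor $K^{-\tau/N}$. The constant $C_\tau$ is tracked explicitly and should be of the form $4(\max\{8\pi e,2(\tau+1)\})^\tau$, which matches Theorem~\ref{thm:fno-approximation-error-with-complexity}.

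\textbf{Step 2 (coordinatewise application).} For each $m\le M$, apply Step~1 to $F_{m,N}=\langle u_N,e_m\rangle_{\mathbb H}$. By Lemma~\ref{lem:vector-barron-embedding} (with $\mathbb X=\mathbb H$ and $\mathcal M_N=u_N$), $F_{m,N}\in\mathbb B_\tau(U_N)$, and moreover
\[
\sum_m\|F_{m,N}\|_{\mathbb B_\tau(U_N)}^2\le\|u_N\|^2_{\mathbb B_\tau(U_N;\mathbb H)} .
\]
This yields networks $\NN_{m,K}$, each with at most $K$ hidden units, and
\[
\|\mathbf F_{N,M}-\mathbf N_K\|^2_{L^2(\rho_N;\R^M)}=\sum_{m=1}^M\|F_{m,N}-\NN_{m,K}\|^2_{L^2(\rho_N)}\le C_\tau^2K^{-2(\frac12+\frac\tau N)}\sum_m\|F_{m,N}\|_{\mathbb B_\tau}^2 .
\]
Taking square roots gives \eqref{eq:NNError}, with a bound independent of $M$. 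If some $F_{m,N}$ vanishes, take $\NN_{m,K}=0$.

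\textbf{Main obstacle.} The difficulty lies entirely in Step~1: obtaining the improved rate $K^{-1/2-\tau/N}$ with the explicit constant, rather than just $K^{-1/2}$. My first attempt would be the following. Split the frequency integral into the region $\|\xi\|_1\le R$ and its complement. The tail region contributes at most $R^{-\tau}\|F\|_{\mathbb B_\tau}$ in sup norm. On the ball $\{\|\xi\|_1\le R\}$, partition into roughly $K$ cells of diameter about $RK^{-1/N}$ and sample one frequency per cell. Because the cosine map $\xi\mapsto\cos(2\pi\langle\xi,\by\rangle)$ is Lipschitz on $U_N$, the sampled sum incurs a variance of order $(RK^{-1/N})^2/K$. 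Balancing $R$ against $K$ then produces the exponent $\frac12+\frac\tau N$. Keeping the constants explicit in $\tau$ is where the factor $(2(\tau+1))^\tau$ is expected to arise.
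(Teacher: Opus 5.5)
Your Step~2 is exactly the paper's proof. Lemma~\ref{lem:vector-barron-embedding} gives $\sum_{m\le M}\|F_{m,N}\|_{\mathbb B_\tau(U_N)}^2\le\|u_N\|_{\mathbb B_\tau(U_N;\mathbb H)}^2$. Lemma~\ref{lem:explicit-BarronApprox} is then applied to each real-valued $F_{m,N}$, and the squared errors are summed. That scalar lemma is proved in the appendix with precisely $C_\tau=4[\max\{8\pi e,2(\tau+1)\}]^\tau$, so citing it closes the argument. The paper additionally packs the $M$ networks into the block $\mathcal L_4$, but the statement does not require this. One small point concerns your $c_0$: the paper's cosine networks have no separate output bias. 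A constant is realized by a unit with zero frequency and zero phase, and that unit counts toward the $K$ units. Keep this convention, since $\mathcal L_4$ has no output bias.

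The self-contained derivation of Step~1 that you sketch under ``Main obstacle'' does not work. Stratifying the frequencies and using only the Lipschitz dependence of $\cos(2\pi\langle\xi,\by\rangle)$ on $\xi$ gives a head error of order $RK^{-\frac12-\frac1N}$. The tail costs $R^{-\tau}$ if discarded, or $K^{-1/2}R^{-\tau}$ if sampled. Optimizing in $R$ yields $K^{-\frac{\tau}{1+\tau}(\frac12+\frac1N)}$ or $K^{-\frac12-\frac{\tau}{N(1+\tau)}}$, respectively. Both exponents are strictly smaller than $\frac12+\frac\tau N$ for every $\tau>0$, and neither exceeds $\frac12+\frac1N$. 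First-order smoothness of the atoms in $\xi$ buys at most one factor $K^{-1/N}$, so your balancing cannot produce the claimed exponent. Also, $\theta(\xi)=\arg\widehat F_e(\xi)$ need not be continuous, so the atoms are Lipschitz in $\xi$ only after splitting into cosine and sine parts with signed weights.

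The missing idea is to exploit order-$\tau$ cancellation through a polynomial correction. This is how the paper proves Lemma~\ref{lem:explicit-BarronApprox}, making the argument of \cite{SiegelXu2022HighOrder} explicit. Take $p=\lfloor(K/2)^{1/N}\rfloor$ and suppose $p\ge\tau$. Lemma~\ref{lem:exponential-polynomial-projection} and $p!\ge(p/e)^p$ then give, uniformly in $\xi$,
\[
\|(I-\mathcal Q_{p-1})a_\xi\|_{L^2(U_N,\rho_N)}\le(2\pi e/p)^\tau,
\qquad
a_\xi=\frac{\cos(2\pi\langle\xi,\cdot\rangle+\theta(\xi))}{1+\|\xi\|_1^\tau}.
\]
Maurey sampling with $n=K-p^N\ge K/2$ atoms therefore needs to control only the component of the sampling error orthogonal to $\mathbb P_{p-1}$. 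That component has size at most $2\sqrt2\,(8\pi e)^\tau\|g\|_{\mathbb B_\tau(U_N)}K^{-\frac12-\frac\tau N}$. The remaining polynomial $\mathcal Q_{p-1}(g-f_n)$ is approximated to any accuracy by at most $p^N$ cosine units via \cite[Theorem~3]{ScarselliTsoi1998}. When $p<\tau$, plain Maurey sampling suffices because then $K^{\tau/N}<[2(\tau+1)]^\tau$. This case is the source of the second entry of the maximum in $C_\tau$.
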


\begin{proof}
The coordinate estimate \eqref{eq:barron-coordinate-bound} gives
\(
\sum_{m=1}^{M}
\|F_{m,N}\|_{\mathbb B_\tau(U_N)}^2
\le \|u_N\|_{\mathbb B_\tau(U_N;\mathbb H)}^2.
\)
Each \(F_{m,N}\) belongs to \(\mathbb B_\tau(U_N)\).
By  Lemma~\ref{lem:explicit-BarronApprox}, for every \(m\le M\), there is a cosine network
\[
\NN_{m,K}(\mathbf y_N)
=\mathbf{c}_m^\top\sigma(A_m\mathbf y_N+\bm{\theta}_m),
\qquad
A_m\in\mathbb R^{K\times N},\quad
\mathbf{c}_m, \bm{\theta}_m\in\mathbb R^K,
\quad \sigma(t)=\cos t,
\]
satisfying
\[
\|F_{m,N}-\NN_{m,K}\|_{L^2(U_N;\rho_N)}
\le C_\tau K^{-\frac{1}{2}-\frac{\tau}{N}}
\|F_{m,N}\|_{\mathbb B_\tau(U_N)}.
\]
Networks using fewer than \(K\) units are padded with zero output weights.
Then,
\[
\begin{aligned}
\|\mathbf{F}_{N,M}-\mathbf{N}_K\|_{L^2(U_N,\rho_N;\mathbb R^M)}^2
=&\sum_{m=1}^{M}
\|F_{m,N}-\NN_{m,K}\|_{L^2(U_N,\rho_N)}^2\\
\le & C_\tau^2K^{-1- \frac{2\tau}{N}} \sum_{m=1}^{M}\|F_{m,N}\|_{\mathbb B_\tau(U_N)}^2
\le C_\tau^2K^{-1- \frac{2\tau}{N}}
\|u_N\|_{\mathbb B_\tau(U_N;\mathbb H)}^2.
\end{aligned}
\]
Taking square roots proves the error estimate \eqref{eq:NNError}.

To realize \(\mathbf{N}_K\), set
\[
A=\begin{pmatrix}A_1\\ \vdots\\ A_M\end{pmatrix}
\in\mathbb R^{MK\times N},\quad
\bm{\theta}=\begin{pmatrix}\bm{\theta}_1\\ \vdots\\ \bm{\theta}_M\end{pmatrix}
\in\mathbb R^{MK},\quad
A_{\mathrm{out}}=
\begin{pmatrix}
\mathbf{c}_1^\top&\cdots&\bm 0\\
\vdots&\ddots&\vdots\\
\bm 0&\cdots&\mathbf{c}_M^\top
\end{pmatrix}
\in\mathbb R^{M\times MK}.
\]
The \(m\)-th component of \(A_{\mathrm{out}}\sigma(A\mathbf y_N+\bm \theta)\) is \(\NN_{m,K}(\mathbf y_N)\). Hence, 
\(
A_{\mathrm{out}}\sigma(A\mathbf y_N+\bm \theta)=\mathbf{N}_K(\mathbf y_N).
\)

For the fourth Fourier layer $\mathcal L_4$, 
choose \(W_4^{(1)}=A\), \(\boldsymbol\beta_4=\bm \theta\), \(W_4^{(2)}=A_{\mathrm{out}}\), {$\sigma_4(t)=\cos t$}, and \(P_4(\xi)=0_{MK\times N}\) for every \(\xi\). 
For the constant field \(\mathbf{v}_3(x) \equiv \mathbf y_N\), the identity
\(
\mathcal L_4(\mathbf{v}_3)(x)
=A_{\mathrm{out}}\sigma_4(A\mathbf y_N+\bm \theta)
=\mathbf{N}_K(\mathbf y_N)
\) holds for any $x \in \Om$.
\end{proof}

Finally, we realize the spatial reconstruction map
$\mathcal D_M$ exactly using fixed basis features.

\begin{lemma}
\label{lem:exact-spatial-reconstruction-layers}
Let $M\ge1$ be an integer. 
There exist two Fourier layers $\mathcal L_5$ and
$\mathcal L_6$, with zero Fourier multipliers and
maximum channel width $4M$, together with a linear
output projection $\mathcal Q$, such that
\(
\widetilde{\mathcal D}_M
:=\mathcal Q\circ\mathcal L_6\circ\mathcal L_5
\)
satisfies
\[
(\widetilde{\mathcal D}_M\mathbf c)(x)
=(\mathcal D_M\mathbf c)(x)
=\sum_{m=1}^M c_m e_m(x),
\qquad \mathbf c\in\mathbb R^M,
\quad\text{for a.e. }x\in\Omega.
\]
\end{lemma}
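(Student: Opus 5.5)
The plan is to mimic the polarization trick used for $\mathcal L_1$--$\mathcal L_2$ in Lemma~\ref{lem:exact-fno-parameter-extraction}. The basis functions $e_m$ enter through the spatially varying bias $\bm\beta_5(x)$. The constant coefficient field $\mathbf c$, which is the output of $\mathcal L_4$, enters through the pointwise weight $W_5^{(1)}$. All Fourier multipliers $P_5,P_6$ are taken to be zero, so every operation is pointwise in $x$ and the identities below hold for a.e.\ $x\in\Omega$, namely wherever the chosen representatives of $e_1,\ldots,e_M$ are finite.

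\textbf{Layer $\mathcal L_5$ (products $c_m e_m$).} Write $\mathbf e(x):=(e_1(x),\ldots,e_M(x))^\top$ and reuse $\mathbf s_1,\mathbf s_2$ from \eqref{eq:s1s2}. We take
\[
W_5^{(1)}=\mathbf s_1\otimes I_M,\qquad \bm\beta_5(x)=\mathbf s_2\otimes\mathbf e(x),\qquad \sigma_5=\operatorname{ReLU}^2,\qquad W_5^{(2)}=\tfrac14\begin{pmatrix}I_M&I_M&-I_M&-I_M\end{pmatrix}.
\]
With these choices the pre-activation has the four blocks $\mathbf c+\mathbf e$, $-(\mathbf c+\mathbf e)$, $\mathbf c-\mathbf e$, $-(\mathbf c-\mathbf e)$, and the channel width is $4M$. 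We then use the identity $\operatorname{ReLU}^2(t)+\operatorname{ReLU}^2(-t)=t^2$. Together with $\tfrac14[(c_m+e_m)^2-(c_m-e_m)^2]=c_me_m$, this gives
\[
\mathcal L_5(\mathbf c)(x)=\mathbf c\odot\mathbf e(x).
\]

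\textbf{Layer $\mathcal L_6$ and projection $\mathcal Q$.} $\mathcal L_6$ must pass a signed field through unchanged. We take
\[
W_6^{(1)}=\begin{pmatrix}I_M\\-I_M\end{pmatrix},\qquad \bm\beta_6=0,\qquad \sigma_6=\operatorname{ReLU},\qquad W_6^{(2)}=\begin{pmatrix}I_M&-I_M\end{pmatrix}.
\]
Then $\operatorname{ReLU}(t)-\operatorname{ReLU}(-t)=t$ makes $\mathcal L_6$ the identity on $M$-channel fields, with width $2M\le4M$. Finally we set $Q=\mathbf 1_M^\top$ and $\beta_Q=0$, which yields $(\widetilde{\mathcal D}_M\mathbf c)(x)=\sum_m c_me_m(x)=(\mathcal D_M\mathbf c)(x)$.

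\textbf{Expected obstacle.} The only delicate point is admissibility. The $e_m\in\mathbb V$ need not be continuous, so $\bm\beta_5$ is merely a measurable (indeed $\mathbb H$-valued) bias function, and equality holds only almost everywhere. This is exactly what the statement claims. We also need to confirm that the layer form \eqref{eq:FourierLayerMatrix} allows an $x$-dependent bias; it does, since $\bm\beta_j(x)$ was already used that way in $\mathcal L_1$. Beyond this, the verification is a direct pointwise computation.
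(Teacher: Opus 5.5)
Your proposal is correct and follows the paper's construction: $e_m$ enters through the bias $\mathbf s_2\otimes E_M(x)$ and $\mathbf c$ through $\mathbf s_1\otimes I_M$, the $\operatorname{ReLU}^2$ polarization identity produces $\mathbf c\odot E_M(x)$, and $\mathcal Q=\mathbf 1_M^\top$ sums the products. The only difference is where the combination matrix $\tfrac14\begin{pmatrix}I_M&I_M&-I_M&-I_M\end{pmatrix}$ sits. The paper keeps the nonnegative $4M$-channel output of $\mathcal L_5$ (with $W_5^{(2)}=I_{4M}$), so the ReLU in $\mathcal L_6$ acts trivially and this matrix becomes $W_6^{(2)}$. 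You apply it already in $\mathcal L_5$, which makes the field signed and forces the identity $\operatorname{ReLU}(t)-\operatorname{ReLU}(-t)=t$ in $\mathcal L_6$; both versions are valid.
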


\begin{proof}
{Set $E_M(x):=(e_1(x),\ldots,e_M(x))^\top$, where $e_1,\ldots,e_M$ are the prescribed output basis functions.} 
We treat \(\mathbf c \in \R^{M}\) as a spatially constant field. 
We retain the notation from \eqref{eq:s1s2}.
Then
\( (\mathcal D_M\mathbf c)(x)=\sum_{m=1}^M c_me_m(x)
=\mathbf1_M^\top\bigl(\mathbf c\odot E_M\bigr)\) indicates that the network needs to calculate \(c_m e_m(x)\) component by component first, and then sum the \(M\) products. 

With the parameters in Table \ref{tab:exact-spatial-reconstruction}, the fifth layer $\mathcal{L}_5$ produces 
$$\mathbf v_5(x):=\mathcal L_5(\mathbf c)(x)
=\sigma_5\!\left(\mathbf s_1\otimes\mathbf c
+\mathbf s_2\otimes E_M(x)\right)
\in\mathbb R^{4M}.$$ 
As every component of \(\mathbf v_5\) is nonnegative, the ReLU in the sixth layer $\mathcal{L}_6$ acts as the identity.
{Set $S_M:=\frac14
\begin{pmatrix}I_M&I_M&-I_M&-I_M\end{pmatrix}
\in\mathbb R^{M\times4M}.$
Using $\sigma_5(t)+\sigma_5(-t)=t^2$, we obtain}
\[
\begin{aligned}
S_M \mathbf v_5
&=\frac14\left[
(\mathbf c+E_M(x))\odot(\mathbf c+E_M(x))
-(\mathbf c-E_M(x))\odot(\mathbf c-E_M(x))
\right] 
=\mathbf c\odot E_M(x).
\end{aligned}
\]
{Thus, $\mathbf v_6(x) 
:=\mathcal L_6(\mathbf v_5)(x)
=S_M\mathbf v_5(x)
=\mathbf c\odot E_M(x)$. 
The output projection therefore gives}
\(
\mathcal Q(\mathbf v_6)(x)=\mathbf1_M^\top \mathbf v_6(x)
=\sum_{m=1}^M c_me_m(x)
=(\mathcal D_M\mathbf c)(x)
\) for almost every \(x\in\Omega\).
\end{proof}

\begin{table}[htbp]
\centering
\caption{Layer parameters of the exact spatial reconstruction in Lemma~\ref{lem:exact-spatial-reconstruction-layers}. 
}
\label{tab:exact-spatial-reconstruction}
\begingroup
\small
\setlength{\tabcolsep}{4pt}
\renewcommand{\arraystretch}{1.4}
\begin{tabular}{@{}cccccc@{}}
\toprule
Map & $W_j^{(1)}$ or $Q$ & Bias & $P_{j}$ & Activation & $W_j^{(2)}$ \\
\midrule
$\mathcal L_5$ & $\mathbf s_1\otimes I_M$ & $\mathbf s_2\otimes E_M(x)$ & $0_{4M\times M}$ & $\operatorname{ReLU}^2$ & $I_{4M}$ \\
$\mathcal L_6$ & $I_{4M}$ & $0_{4M}$ & $0_{4M\times4M}$ & $\operatorname{ReLU}$ & $S_M$ \\
$\mathcal Q$ & $\mathbf1_M^\top$ & $0$ & --- & --- & --- \\
\bottomrule
\end{tabular}
\par\smallskip
\begin{minipage}{\linewidth}
\end{minipage}
\endgroup
\end{table}

The corresponding FNO-type spectral neural operator is then defined by
\(
\Ncal
:=
\widetilde{\mathcal D}_M
\circ
\mathbf{N}_{K}
\circ
\widetilde{\mathcal E}_N = \mathcal Q\circ\mathcal L_6 \circ \cdots \circ \mathcal{L}_1 \circ \mathcal{R}.
\)
Equivalently, for \(g(\cdot;\by) \in \mathbb B^{\bb}\),
\[
[\Ncal \left(g(\cdot; \by)\right)](x)
=
\sum_{m=1}^M
\NN_{m,K}(\by_N)e_m(x).
\]

{In the next two subsections, we use the preceding network
construction to prove
Theorems~\ref{thm:fno-approximation-error-with-complexity}
and~\ref{thm:fno-approximation-error-with-complexity2}.}


\subsection{Proof of Theorem \ref{thm:fno-approximation-error-with-complexity}}
\label{subsec:thm_error1}
{Recall that $\Ncal$ is the network constructed above.}
Fix $n\ge N$ and write $g_n=g_n(\cdot;\by_n)$ and
$g_N=g_N(\cdot;\by_N)$, where $\by_n\in U_n$ and
$\by_N=(y_0,\ldots,y_{N-1})$.
The triangle inequality yields
\begin{equation}\label{eq:error-decomposition}
\begin{aligned}
&\norm{\mathcal{G}^{\dag}(g_n)-\Ncal(g_n)}_{L^2(U_n,\rho_n;\mathbb H)} \\
&\leq
\underbrace{
\norm{\mathcal{G}^{\dag}(g_n)-\mathcal{G}^{\dag}(g_{N})}_{L^2(U_n,\rho_n;\mathbb H)}
}_{\mathrm{I}}
+
\underbrace{
\norm{\mathcal{G}^{\dag}(g_{N})-\Pi_M\mathcal{G}^{\dag}(g_{N})}_{L^2(U_N,\rho_N;\mathbb H)}
}_{\mathrm{II}}
\\
&\quad+
\underbrace{
\norm{\Pi_M\mathcal{G}^{\dag}(g_{N})-\Ncal(g_n)}_{L^2(U_N,\rho_N;\mathbb H)}
}_{\mathrm{III}}.
\end{aligned}
\end{equation}
{The functions in $\mathrm{II}$ and $\mathrm{III}$ depend
only on $\by_N$, since the network extracts only the first
$N$ parameters. Integrating out the remaining coordinates
with respect to the product probability measure gives
the displayed $L^2(U_N,\rho_N;\mathbb H)$ norms.}

We then estimate each term individually.
By the definition in
\eqref{eq:fno-total-error-with-complexity-high-1},
$\mathrm{I} \le \varepsilon_N^{\mathrm{in}}$ and $\mathrm{II} =\varepsilon_{N,M}^{\mathrm{out}}$ are both immediate.
%
Finally for $\mathrm{III}$, using exact parameter extraction and spatial reconstruction in Lemmas \ref{lem:exact-fno-parameter-extraction} and \ref{lem:exact-spatial-reconstruction-layers},
the decomposition in \eqref{eq_decompo},
and the $\mathbb H$-orthonormality of
$\{e_m\}$, we obtain
\[
\begin{aligned}
\mathrm{III}
&=\norm{ \mathcal{D}_M \circ \mathbf{F}_{N,M} \circ \mathcal{E}_N(g_n) - \widetilde{\mathcal D}_M
\circ
\mathbf{N}_{K}
\circ
\widetilde{\mathcal E}_N(g_n) }_{L^2(U_N,\rho_N;\mathbb H)} \\
&=\|\mathbf F_{N,M}-\mathbf N_K\|
_{L^2(U_N,\rho_N;\mathbb R^M)}.  
\end{aligned}
\]
Lemma~\ref{lem:spectral-coefficient-approximation}
and the definition of the operator Barron norm
in \eqref{eq_operatornorm} therefore give
\[
\begin{aligned}
\mathrm{III}
&\le C_\tau K^{-(\frac12+\frac{\tau}{N})}
\|u_N\|_{\mathbb B_\tau(U_N;\mathbb H)}\le C_\tau K^{-(\frac12+\frac{\tau}{N})}
\|\mathcal G^\dagger\|
_{\mathbb B_\tau(\mathbb B^\bb;\mathbb H)}.
\end{aligned}
\]
The constructed network has maximum channel width
$\max\{4N,MK,4M\}$ and is independent of $n$.
Combining the three estimates, substituting
\eqref{eq:explicit-BarronApprox-constant}, and taking
the supremum over $n\ge N$ proves
\eqref{eq:fno-total-error-with-complexity-high}.


\subsection{Proof of Theorem~\ref{thm:fno-approximation-error-with-complexity2}}
\label{subsec:error_2}

{Using \eqref{eq:ridge-map-Gamma},
\eqref{eq:explicit-BarronApprox-constant}, 
and $\max\{8\pi e,2(\tau+1)\}\le4\pi e(\tau+2)$, we bound the neural approximation term in
\eqref{eq:fno-total-error-with-complexity-high} by}
\begin{equation*}
\begin{aligned}
R_{N,K}(\tau)
&:=C_\tau C_B [ D_{B} (\tau+4)]^\tau
K^{-\frac12-\frac\tau N}
\le4 C_B K^{-1/2}
\left[ \widetilde{D}_B (\tau+2)(\tau+4)K^{-1/N}\right]^\tau,
\end{aligned}
\end{equation*}
where $\widetilde{D}_B:=4\pi eD_{B}$.
We choose $\tau$ to minimize the dominant factor
$\left[\widetilde{D}_B\tau^2K^{-1/N}\right]^\tau$.
Its logarithm is
$\Phi_K(\tau):=\tau(\log \widetilde{D}_B+2\log\tau-\frac{\log K}{N})$, and
\[
\Phi_K'(\tau)=\log \widetilde{D}_B+2\log\tau+2-\frac{\log K}{N},
\qquad
\Phi_K''(\tau)=\frac2\tau>0.
\]
Thus, the minimum is achieved at
$\tau=
\frac{K^{1/(2N)}}{e\sqrt{\widetilde{D}_B}} =:\tau_K$. Consequently, $\widetilde{D}_B\tau_K^2K^{-1/N}=e^{-2}$. Using $\log(1+x)\le x$ for $x\ge0$, we obtain
\[
\begin{aligned}
\left[\widetilde{D}_B(\tau_K+2)(\tau_K+4)K^{-1/N}\right]^{\tau_K}
=e^{-2\tau_K}
\left(1+\frac2{\tau_K}\right)^{\tau_K}
\left(1+\frac4{\tau_K}\right)^{\tau_K}
\le e^6e^{-2\tau_K}.
\end{aligned}
\]
Therefore, with $C_{\mathcal{N}}:= 4e^6 C_B$, we obtain
\begin{equation*}
R_{N,K}(\tau_K)
\le4e^6 C_B K^{-1/2}e^{-2\tau_K}
= C_{\mathcal{N}} K^{-1/2}
\exp\left(-\frac{K^{1/(2N)}}{e\sqrt{\pi eD_{B}}}\right).
\end{equation*}
Combining this bound with the two truncation terms in \eqref{eq:fno-total-error-with-complexity-high} proves \eqref{eq:fno-optimized-error}. 

%% file: Chapters/5_Experiment.tex
\section{Numerical Experiments}
In this section, we examine the performance of FNO on the approximation of operators $\mathcal{G}^{\dagger}$ presented in the Theorem \ref{thm:fno-approximation-error-with-complexity} and Theorem \ref{thm:fno-approximation-error-with-complexity2}.
To guarantee an over-parameterized regime~\cite{2023KontolatiGoswamiShieldsEtAl} such that the approximation/representation error is dominant instead of the optimization error~\cite{2023SiegelHongJinEtAl}, we consider a simple 1D 
parametric elliptic problem \eqref{eq_case1_darcyflow} on $\Omega=(0,1)$ with the source $f=1$. 
For $\by=(y_j)_{j\in\mathbb N_0}$, let $u(\cdot;\by)\in \mathbb V$ be the solution corresponding to the coefficient family
\begin{equation}
\label{eq:exp-darcy-coefficient}
\begin{aligned}
a(x;\by)&=2+g(x;\by),\\
g(x;\by)&=\sum_{j=0}^{\infty}b_jy_j\psi_j(x),~~
b_j=C_{\sigma}(j+1)^{-r},~~
\psi_j(x)=\cos(2\pi jx),
\end{aligned}
\end{equation}
where 
$C_\sigma>0$ and the parameters $y_j$ are independent and uniformly distributed on $[-1,1]$. 
Since $|\psi_j|\le1$, 
to ensure the uniform lower bound of $a$ and the coercivity,
$C_\sigma$ needs to be reasonably small such that 
$C_\sigma\sum_{j=1}^{\infty} j^{-r}<2$. 
Then, the decay rate $r>1$  specifies the spatial regularity of the input family. 
For $0<p<r-1$, choose $w_j=1+j^p$, which yields
\(
\|\bb\|_{\ell_w}
=C_\sigma\sum_{j=0}^{\infty}\frac{1+j^p}{(j+1)^r}<\infty.
\)




We train an FNO to approximate the coefficient-to-solution
operator $\mathcal G^\dagger:g\mapsto u$.
For given $N$ and $M$, the training data consist of truncated
inputs $g_N$ and projected outputs $\Pi_Mu$.
At test time, we evaluate the network on $g$ and measure
the error against the corresponding solution $u$.
By Lemma \ref{lem:input-truncation}, the input truncation quantity in the error estimate satisfies
\(
\varepsilon_N^{\mathrm{in}}
=O(N^{-r}).
\)
In addition, we use the fixed $\mathbb H$-orthonormal basis $e_m(x)=\sqrt2\sin(m\pi x)$ to reconstruct the final solution. 
Since the one-dimensional elliptic solution $u_N$ is $H^2$-regular, its projection onto the first $M$ modes satisfies
$$
 \|(I-\Pi_M)u_N\|_{L^2(\Omega)}\le \|u_N''\|_{L^2(\Omega)}/[\pi^2(M+1)^2] .
$$ 
so the output truncation error is $\varepsilon_{N,M}^{\mathrm{out}} = O(M^{-2})$.
We consider coefficient decay exponents $r\in\{1.5,2,3\}$.
We use an FNO that has 4 layers and 16 Fourier modes per layer;
see other hyperparameters of the network in the caption of Figure~\ref{fig:standard-fno-parameter-study}.
It is trained on 8192 samples for 600 epochs by Adam with learning rate $2\times 10^{-3}$ and batch size 512.
The results are reported in Figure~\ref{fig:standard-fno-parameter-study} which show the approximation error of FNO as a function of the input truncation dimension $N$, the number of retained output modes $M$, and the network width $K$. The vertical axis reports the $L^2$ test error on a logarithmic scale.
The empirical slopes $s$ reported in the legends are estimated by least-squares linear regression in log--log coordinates:
\(
\log E(t)\approx s\log t+c,
\)
where $t\in\{N,M,K\}$ denotes the parameter varied in each panel and $E(t)$ is the corresponding test error. The fitted slopes characterize the empirical dependence of the error on each parameter over the displayed range.
The observed convergence rates with respect to \(N\) and \(M\) are broadly consistent with the theoretical predictions, although the rates in \(N\) for \(r=1.5\) and \(r=2\) are slightly faster than predicted.
The curves level off once the other error contributions become dominant.
For convergence with respect to \(K\), the observed rate is slightly faster than \(K^{-1/2}\), as expected from the additional exponential factor.

\begin{figure}[htbp]
\centering
\includegraphics[width=0.32\linewidth]{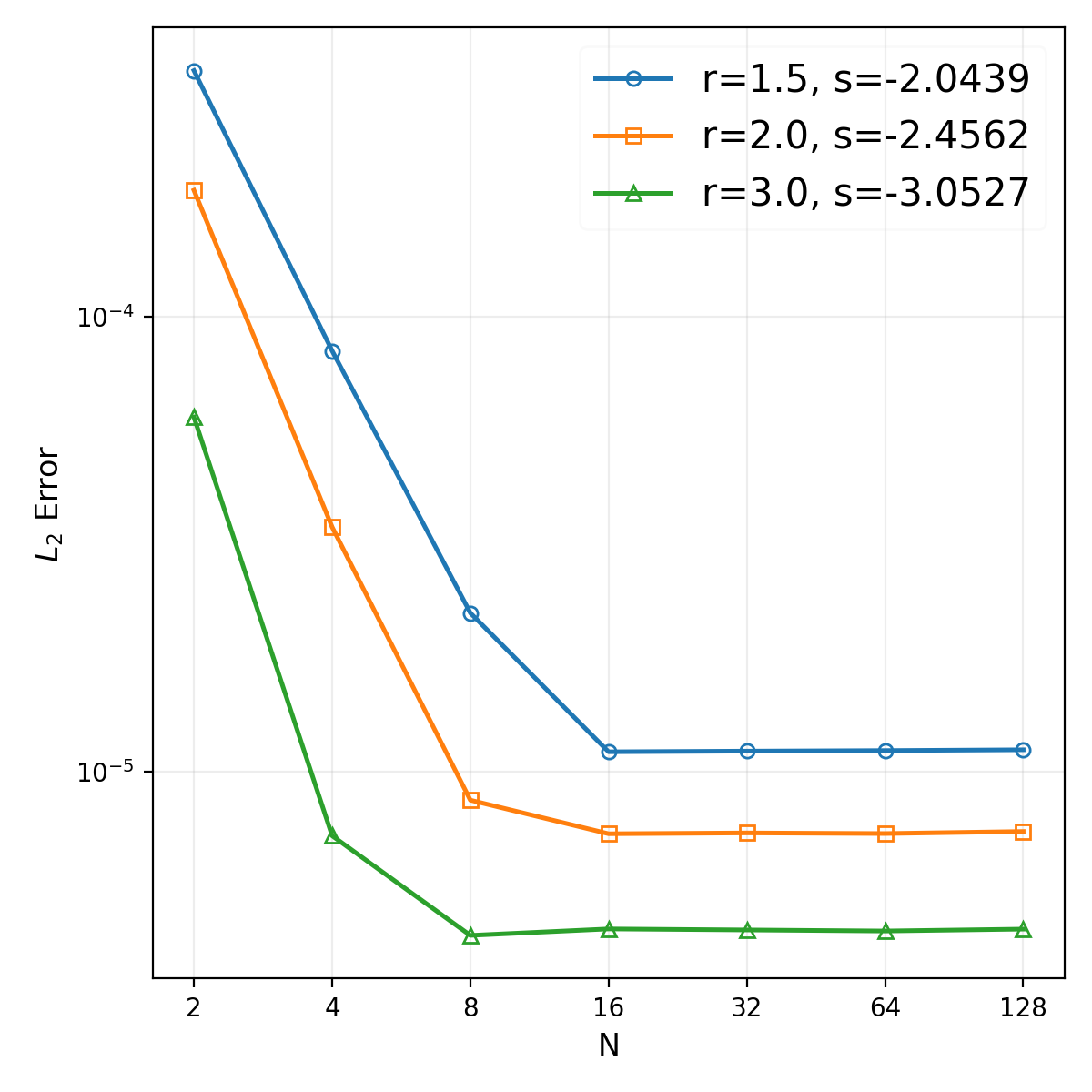}\hfill%
    \includegraphics[width=0.32\linewidth]{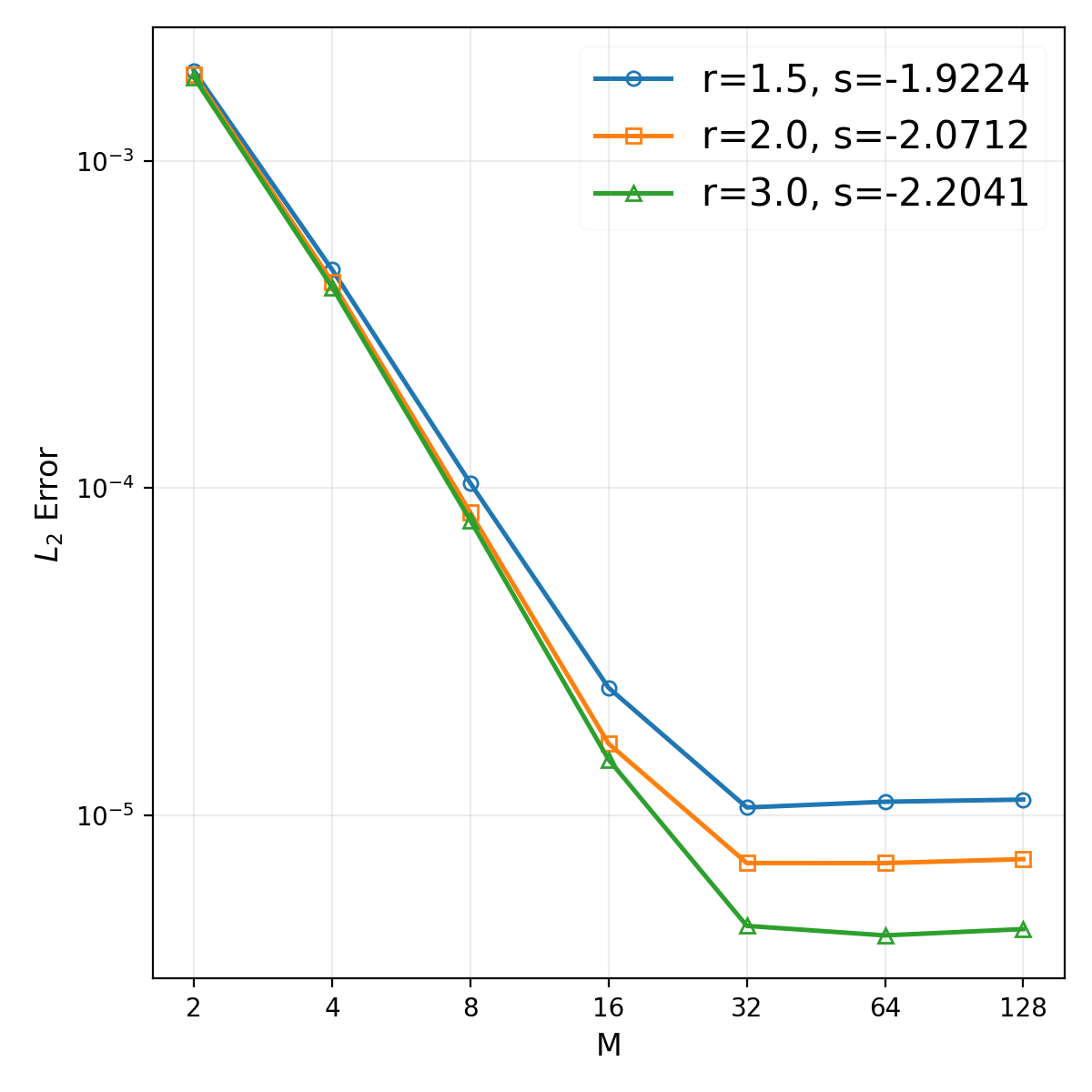}\hfill%
    \includegraphics[width=0.32\linewidth]{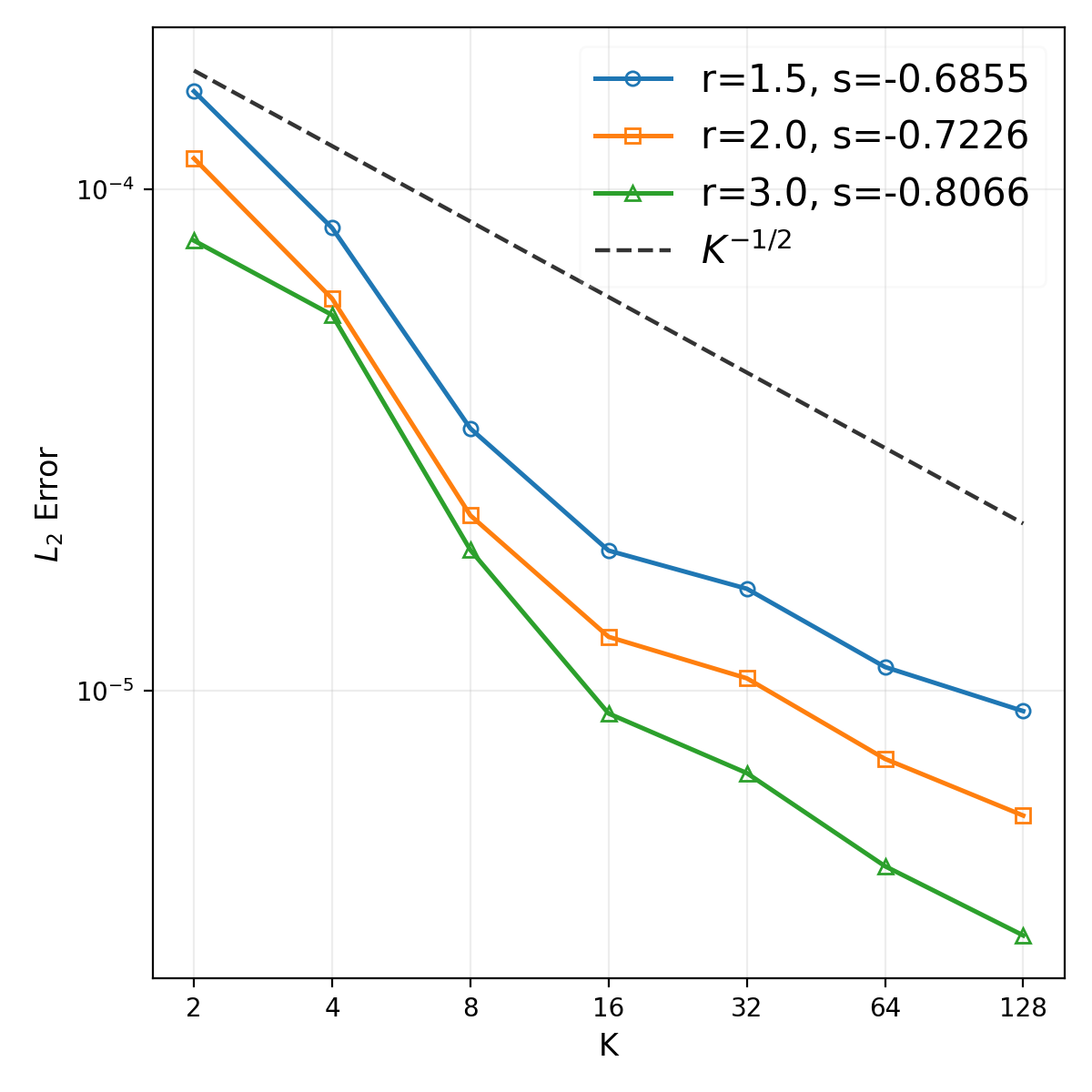}
\caption{$L^2$ test errors of FNO, evaluated on $8192$ test pairs for coefficient decay exponents $r\in\{1.5,2,3\}$.
\textbf{Left}: with $M=128$ and $K=64$ fixed, the error v.s. $N$. 
\textbf{Middle}: with $N=64$ and $K=64$ fixed, the error v.s. $M$.
\textbf{Right}: with $N=64$ and $M=128$ fixed, the error v.s. $K$.
}
\label{fig:standard-fno-parameter-study}
\end{figure}

Next, we examine the predicted stretched-exponential convergence with respect to $K$.
To make the dependence on $K^{1/(2N)}$ in \eqref{eq:fno-optimized-error} visible over the tested range of $K$, 
we generate the input coefficient data by retaining only the first one or two terms in \eqref{eq:exp-darcy-coefficient}, 
corresponding to $N=1$ and $N=2$, respectively. 
In this case, we use 2 Fourier layers with 16 modes per layer, fixing $M=\mathrm{32}$. The training sample sizes for $N=1$ and $N=2$ are 100 and 400, respectively.
For each $N$, we train and test the network on this
finite-parameter family, with target
$\mathcal G^\dagger(g_N)=u_N$.
Thus, no input-truncation error arises relative to this target.
Figure~\ref{fig:standard-fno-parameter-study2} presents the numerical results.
For each $N$, we fit the errors to
$AK^{-1/2}\exp\bigl(-cK^{1/(2N)}\bigr)$,
where $A$ and $c$ are determined from the data and reported in the figure.
The observed decay is consistent with the predicted stretched-exponential rate.

\begin{figure}[htbp]
\centering
\includegraphics[width=0.32\linewidth]{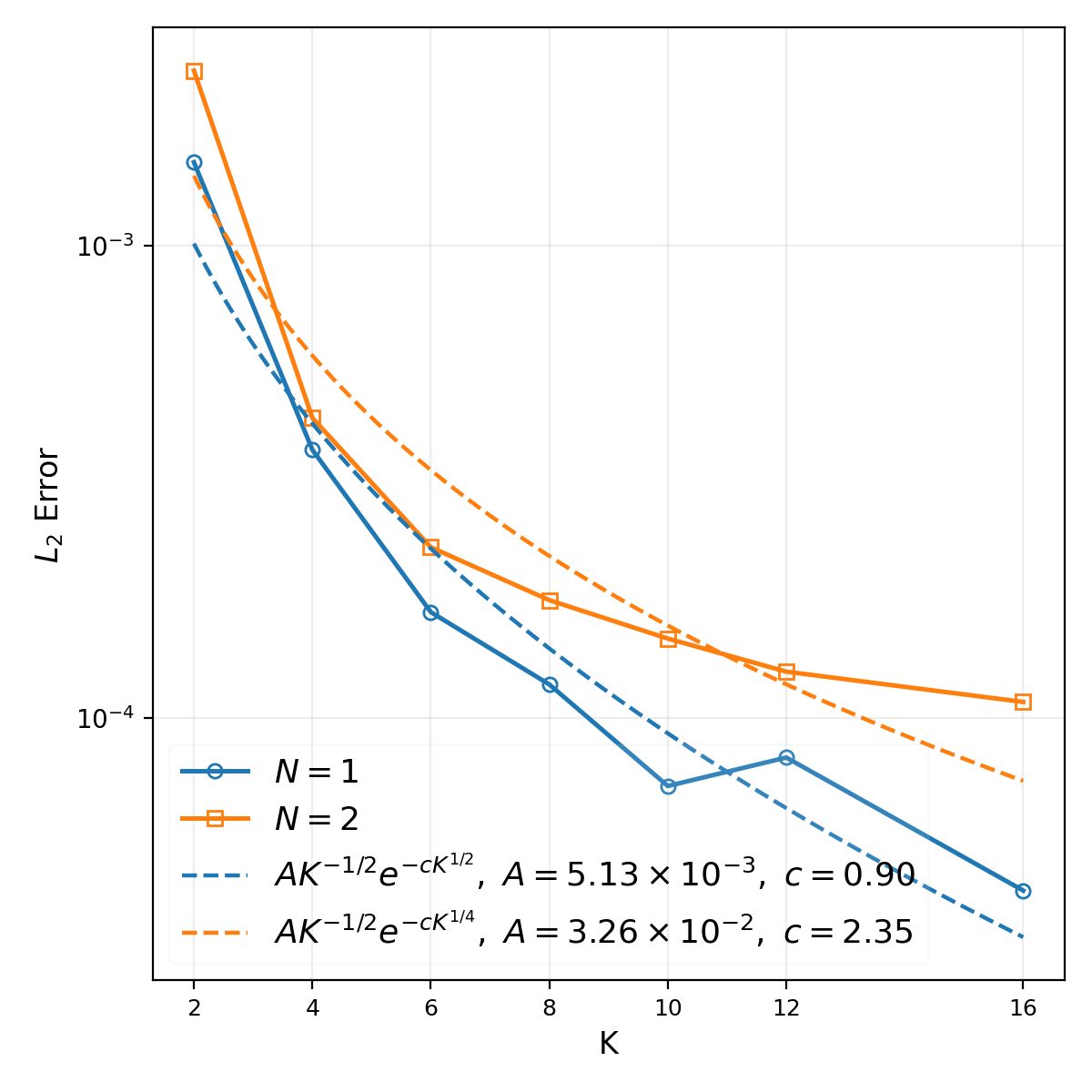}
\caption{$L^2$ test errors of FNO as functions of the network width $K$ for coefficient fields generated with $r=2$ and input dimensions $N=1$ and $N=2$. The dashed curves show the fitted rates $AK^{-1/2}\exp(-cK^{1/(2N)})$.}
\label{fig:standard-fno-parameter-study2}
\end{figure}

%% file: Chapters/6_Summary.tex
\section{Conclusion}
We introduce a spectral Barron regularity framework for nonlinear operators between infinite-dimensional function spaces. 
Under this framework, using holomorphic techniques and harmonic analysis, 
we show that the elliptic operators admit arbitrary Barron regularity with explicit constants in the norm bounds.
This regularity is then translated into approximation theory for an explicit FNO-type encoder-decoder network.
By optimizing the Barron regularity order, we show that the approximation rate of such an operator learner can be nearly exponential with respect to network width.
The Darcy-flow experiments illustrate the predicted roles of input regularity, truncation dimensions, and network width.
Together, these results provide a direct quantitative link between elliptic PDE structure, operator Barron regularity, and neural-operator approximation.
In future work, we will apply this framework to other PDE
solution operators for operator learning.


\section*{Acknowledgments}
Parts of the analysis and settings are refined by GPT-6 with online search and access to references, 
including the removal of unnecessary assumptions from the initial analysis and locating some existing results in the literature. 
The authors subsequently verified and simplified the resulting arguments and organized them into those results in sections of harmonic analysis and holomorphic extension. 
GPT-6 was also used to improve the language and presentation of the manuscript and to assist in checking the numerical study. The authors take full responsibility for the final content.

%% file: Chapters/7_Appendix_v2.tex

\section{Representation of Barron spaces}
\label{append_barron_represent}

\begin{lemma}
\label{lem_mul_barron}
Let $p>0$. For every $f,g\in\mathbb B_p(\R^d)$, we have $fg\in\mathbb B_p(\R^d)$ and
\begin{equation}
\label{lem_mul_barron_eq0}
\|fg\|_{\mathbb B_p(\R^d)}
\leq 2^{(p-1)_+}
\|f\|_{\mathbb B_p(\R^d)}
\|g\|_{\mathbb B_p(\R^d)}.
\end{equation}
\end{lemma}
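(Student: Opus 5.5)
We work on $U=\R^d$, so no extension is involved: the norm in Definition~\ref{Def:SpatialNorm} reduces to $\|f\|_{\mathbb B_p(\R^d)}=\int_{\R^d}(1+\|\xi\|_1^p)|\widehat f(\xi)|\,\dd\xi$. The plan is to write $\widehat{fg}=\widehat f*\widehat g$, which is legitimate because $\widehat f,\widehat g\in L^1$. Then $f,g$ are bounded continuous functions given by Fourier inversion, and the product is the inverse transform of the convolution. Next we bound the weight $1+\|\xi\|_1^p$ by a product of the individual weights and finish with Tonelli.

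\emph{Step 1 (convolution identity).} Since $\widehat f,\widehat g\in L^1(\R^d)$, Young's inequality gives $h:=\widehat f*\widehat g\in L^1$ with $\|h\|_{L^1}\le\|\widehat f\|_{L^1}\|\widehat g\|_{L^1}$. Fubini then gives
\[
\int h(\xi)e^{2\pi i\langle x,\xi\rangle}\,\dd\xi=f(x)g(x).
\]
So $fg$ is the inverse Fourier transform of an integrable function, and we identify $\widehat{fg}=h$ in the sense used by the definition. Thus $|\widehat{fg}(\xi)|\le\int|\widehat f(\eta)||\widehat g(\xi-\eta)|\,\dd\eta$.

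\emph{Step 2 (weight inequality).} Set $\zeta=\xi-\eta$. The triangle inequality gives $\|\xi\|_1\le\|\eta\|_1+\|\zeta\|_1$, and the elementary convexity bound (the same one used in Lemma~\ref{lem:compatible-product}) gives $\|\xi\|_1^p\le 2^{(p-1)_+}(\|\eta\|_1^p+\|\zeta\|_1^p)$. Since $2^{(p-1)_+}\ge1$, we obtain
\[
1+\|\xi\|_1^p\le 2^{(p-1)_+}\bigl(1+\|\eta\|_1^p+\|\zeta\|_1^p\bigr)\le 2^{(p-1)_+}(1+\|\eta\|_1^p)(1+\|\zeta\|_1^p).
\]

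\emph{Step 3 (Tonelli).} Multiply the Step 1 bound by the weight, apply Step 2, and integrate over $\xi$. Changing variables $(\xi,\eta)\mapsto(\eta,\zeta)$, the double integral factorizes into $\|f\|_{\mathbb B_p}\|g\|_{\mathbb B_p}$, and this gives \eqref{lem_mul_barron_eq0}.

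The main obstacle is only the justification in Step 1. If the definition requires $f,g$ to be integrable, so that $\widehat{fg}$ is the classical transform, note that $fg\in L^1$ because $f\in L^1$ and $g\in L^\infty$. Its classical transform then coincides with $h$ by the Fubini computation, or equivalently by uniqueness of Fourier transforms for functions in $L^1$ with $L^1$ transforms. Apart from this, the argument is the convolution analogue of Lemma~\ref{lem:compatible-product}.
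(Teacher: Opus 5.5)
Your proposal is correct and follows essentially the same route as the paper. Both arguments use the submultiplicative weight bound $1+\|\xi\|_1^p\le 2^{(p-1)_+}(1+\|\xi-\eta\|_1^p)(1+\|\eta\|_1^p)$, the product formula $\widehat{fg}=\widehat f*\widehat g$ (justified by $fg\in L^1$), and Tonelli with a change of variables; your Step~1 justification ($f\in L^1$, $g\in L^\infty$, then the Fubini computation) is slightly more explicit than the paper's.
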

\begin{proof}
Set $W_p(\xi):=1+\|\xi\|_1^p$.
Using $(a+b)^p\leq 2^{(p-1)_+}(a^p+b^p)$ for $a,b\geq0$ and the triangle inequality, we obtain
\begin{equation}
\label{lem_mul_barron_eq2}
\begin{aligned}
W_p(\xi)
\leq 1+2^{(p-1)_+}
\left(\|\xi-\eta\|_1^p+\|\eta\|_1^p\right)
\leq 2^{(p-1)_+}W_p(\xi-\eta)W_p(\eta),
\qquad \xi,\eta\in\R^d.
\end{aligned}
\end{equation}

Since $\widehat f,\widehat g\in L^1(\R^d)$, Fourier inversion gives bounded continuous representatives of $f$ and $g$.
Together with $f,g\in L^1(\R^d)$, this implies $fg\in L^1(\R^d)$.
Thus, under the Fourier convention fixed in Section~\ref{sec:barron-spaces}, the product formula reads
\[
\widehat{fg}(\xi)
=(\widehat f*\widehat g)(\xi)
=\int_{\R^d}\widehat f(\xi-\eta)\widehat g(\eta)\,d\eta.
\]
In particular, Tonelli's theorem and \eqref{lem_mul_barron_eq2} yield
\begin{align*}
\|fg\|_{\mathbb B_p(\R^d)}
&\leq\int_{\R^d}\int_{\R^d}
W_p(\xi)|\widehat f(\xi-\eta)|\,|\widehat g(\eta)|\,d\eta\,d\xi\\
&\leq 2^{(p-1)_+}
\int_{\R^d}\int_{\R^d}
W_p(\xi-\eta)|\widehat f(\xi-\eta)|
W_p(\eta)|\widehat g(\eta)|\,d\eta\,d\xi\\
&=2^{(p-1)_+}
\|f\|_{\mathbb B_p(\R^d)}
\|g\|_{\mathbb B_p(\R^d)},
\end{align*}
where the last equality follows from the change of variables $\zeta=\xi-\eta$.
This proves \eqref{lem_mul_barron_eq0}.
\end{proof}

\begin{lemma}
\label{lem:lem_phi1}
Let $p>0$ and choose $L>0$ such that
$\overline{\Omega}\subset Q_L:=(-L/2,L/2)^d$.
Let $\{\psi_j\}_{j\in\N_0}$ be the real trigonometric
basis on $Q_L$ defined in \eqref{eq_basis}, and set
$w_j:=1+\|\bk(j)/L\|_1^p$.
Fix a cutoff function $\chi$ satisfying \eqref{eq_cutoff_fun}.
Then the mapping
$\Phi:\ell_w\to\mathbb B_p(\Omega)$,
$\balpha\mapsto
\left.\left(\sum_{j\in\N_0}\alpha_j\psi_j\right)\right|_\Omega$
is well defined, linear, and bounded.
For every $\balpha\in\ell_w$, the defining series converges
absolutely and uniformly on $Q_L$, and
\begin{equation}
\label{eq:lem_phi1}
\|\Phi(\balpha)\|_{\mathbb B_p(\Omega)}
\le C_{\Omega,p}\|\balpha\|_{\ell_w},
\qquad
C_{\Omega,p}:=
2^{(p-1)_+}\|\chi\|_{\mathbb B_p(\R^d)}.
\end{equation}
\end{lemma}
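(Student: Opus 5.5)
The natural construction is the periodic one. Fix a cutoff $\chi\in\mathbb B_p(\R^d)$ as in \eqref{eq_cutoff_fun}, equal to one on $\overline\Omega$. Given $\balpha\in\ell_w$, I would take as candidate extension
\[
G_e(x):=\chi(x)\sum_{j\in\N_0}\alpha_j\psi_j(x),\qquad x\in\R^d,
\]
where each $\psi_j$ is read as the globally defined trigonometric function in \eqref{eq_basis}. Since $|\psi_j|\le1$ everywhere and $\sum_j|\alpha_j|\le\|\balpha\|_{\ell_w}$ (because $w_j\ge1$), the series converges absolutely and uniformly on $\R^d$, hence on $Q_L$. The limit is bounded and continuous, so $\Phi(\balpha)$ is well defined. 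Linearity is immediate, and $G_e|_\Omega=\Phi(\balpha)$ because $\chi=1$ on $\Omega$.

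For the Barron bound I would work term by term. Write $\psi_j$ as a combination of the exponentials $e^{\pm2\pi i\langle\bk(j),x\rangle/L}$ with coefficients of modulus $1/2$, or $1$ for $j=0$. Then
\[
\widehat{\chi\psi_j}(\xi)=\tfrac12\bigl[\widehat\chi(\xi-\bk(j)/L)\pm\widehat\chi(\xi+\bk(j)/L)\bigr],
\]
up to the appropriate factor of $i$ for sines. Apply the submultiplicative weight inequality \eqref{lem_mul_barron_eq2} from the proof of Lemma~\ref{lem_mul_barron}, namely $W_p(\xi)\le2^{(p-1)_+}W_p(\xi\mp\bk(j)/L)\,W_p(\pm\bk(j)/L)$, and note that $W_p(\pm\bk(j)/L)=w_j$. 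This gives
\[
\int(1+\|\xi\|_1^p)\,|\widehat{\chi\psi_j}(\xi)|\,\dd\xi\le2^{(p-1)_+}\|\chi\|_{\mathbb B_p(\R^d)}\,w_j .
\]
The $\tfrac12+\tfrac12$ from the two shifts sums to one, and the $j=0$ case is trivial.

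Next I would pass to the full series. $\sum_j\alpha_j\chi\psi_j$ converges in $L^1(\R^d)$ because $\chi\in L^1$. Its Fourier transform therefore equals $\sum_j\alpha_j\widehat{\chi\psi_j}$, with pointwise convergence dominated by the weighted $L^1$ bounds above. Tonelli and the triangle inequality then give
\[
\int W_p|\widehat G_e|\le2^{(p-1)_+}\|\chi\|_{\mathbb B_p(\R^d)}\sum_j w_j|\alpha_j| .
\]
$G_e$ is integrable, and its continuous representative from Fourier inversion agrees with the uniformly convergent sum. Taking the infimum in Definition~\ref{Def:SpatialNorm} yields \eqref{eq:lem_phi1}, and boundedness of $\Phi$ follows.

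The main obstacle is the bookkeeping in the last step: interchanging the infinite sum with the Fourier transform, and making sure that the representative given by Fourier inversion coincides with the uniform limit. One point to be careful about is the convention for $\|\chi\|_{\mathbb B_p(\R^d)}$. Here it should mean the weighted integral of $\widehat\chi$ itself, not an infimum over extensions, since $\chi$ is already defined on all of $\R^d$.
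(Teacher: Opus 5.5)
Your proposal is correct and follows the paper's proof essentially step for step. Your extension $G_e=\chi\sum_j\alpha_j\psi_j$ coincides with the paper's $F_{\balpha}$, since $\chi$ is supported in $Q_L$, and the argument uses the same ingredients: the termwise shift formula for $\widehat{\chi\psi_j}$, the submultiplicative weight bound with $W_p(\pm\bk(j)/L)=w_j$, and Tonelli. Your justifications of the sum--Fourier-transform interchange (via $L^1(\R^d)$ convergence of $\sum_j\alpha_j\chi\psi_j$) and of the continuous representative are, if anything, slightly more explicit than the paper's.
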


\begin{proof}
Let $\balpha\in\ell_w$. Since $w_j\geq1$ and $\norm{\psi_j}_{L^{\infty}}=1$, we have
\(
\sum_{j\in\N_0}\norm{\alpha_j\psi_j}_{L^{\infty}(Q_L)}
\le \sum_{j\in\N_0}w_j|\alpha_j|
=\|\balpha\|_{\ell_w}<\infty.
\)
Thus, the series $H_{\balpha}:=\sum_{j\in\N_0}\alpha_j\psi_j$ converges absolutely and uniformly on $Q_L$.

To estimate the Barron norm, 
define $F_{\balpha}:=\chi H_{\balpha}$, extended by zero outside $Q_L$.
Then $F_{\balpha}\in L^1(\R^d)$ and $F_{\balpha}|_\Omega=\Phi(\balpha)$.
Set $\xi_j:=\bk(j)/L$ and $W_p(\xi):=1+\|\xi\|_1^p$.
The definition of the Fourier transform gives
\[
\begin{aligned}
\widehat{F}_{\balpha}(\xi)
&=
\int_{\R^d}
\chi(x)\sum_{j\in\mathbb N_0}\alpha_j\psi_j(x)
e^{-2\pi i\langle x,\xi\rangle}\,\dd x
=
\sum_{j\in\mathbb N_0}
\alpha_j\,\widehat{\chi\psi_j}(\xi).
\end{aligned}
\]
For the real trigonometric basis, there holds
\[
\widehat{\chi\psi_j}(\xi)
=
\begin{cases}
\widehat{\chi}(\xi),
& \psi_0(x)=1,
\\[1mm]
\dfrac12\left[
\widehat{\chi}(\xi - \xi_j)+\widehat{\chi}(\xi + \xi_j)
\right],
& \psi_j(x)=\cos\bigl(2\pi\langle x, \xi_j\rangle\bigr),
\\[2mm]
\dfrac1{2i}\left[
\widehat{\chi}(\xi-\xi_j)-\widehat{\chi}(\xi+\xi_j)
\right],
& \psi_j(x)=\sin\bigl(2\pi\langle x,\xi_j\rangle\bigr).
\end{cases}
\]
Here, the interchange of summation and integration follows from the $L^1$-integrability property.
Therefore, we have
\[
\left|\widehat{F}_{\balpha}(\xi)\right|
\le
\frac12\sum_{j\in\mathbb N_0}|\alpha_j|
\left[
\left|\widehat{\chi}(\xi - \xi_j)\right|
+
\left|\widehat{\chi}(\xi + \xi_j)\right|
\right].
\]
Moreover, the weight inequality in Lemma~\ref{lem_mul_barron} and the identity $W_p(\xi_j)=w_j$ give
\[
W_p(\xi)
\le 2^{(p-1)_+} W_p(\xi \pm \xi_j)W_p(\xi_j)
= 2^{(p-1)_+} w_j W_p(\xi \pm \xi_j).
\]
Using Tonelli's theorem and the change of variables $\bm{\zeta}=\xi \pm \xi_j$, we obtain
\begin{align*}
\int_{\R^d}
W_p(\xi)|\widehat{F}_{\balpha}(\xi)|\,d\xi
&\le
\frac{1}{2}\sum_{j\in\N_0}|\alpha_j|
\int_{\R^d}
W_p(\xi) \left( |\widehat\chi(\xi-\xi_j)| + |\widehat\chi(\xi+\xi_j)| \right) \,\dd\xi
\\
&\le
2^{(p-1)_+}\sum_{j\in\N_0}w_j|\alpha_j|
\int_{\R^d}
W_p(\bm{\zeta})|\widehat\chi(\bm{\zeta})| \, d\bm{\zeta}
=2^{(p-1)_+}\|\chi\|_{\mathbb B_p(\R^d)}
\|\balpha\|_{\ell_w}.
\end{align*}
By Lemma~\ref{lem_cut_Barrom}, $\|\chi\|_{\mathbb B_p(\R^d)}<\infty$, so $F_{\balpha}$ is an admissible extension of $\Phi(\balpha)$ in $\mathbb B_p(\R^d)$.
The definition of the Barron norm then yields \eqref{eq:lem_phi1}. 
\end{proof}


\section{Barron regularity of cutoff functions}

Recall that a neighborhood of $\bar{\Omega}$ is contained in $Q_L$ and define a cutoff function:
\begin{equation}
\label{eq_cutoff_fun}
\chi\in C_c^\infty(Q_L), ~~~ \text{with} ~ \chi=1 ~ \text{on a neighborhood of }\overline\Omega.
\end{equation}
In this section, we discuss the Barron regularity of such cutoff functions. 
A one-dimensional cutoff construction and quantitative derivative estimates are given in Lemma \ref{lem_cutoff}. 

\begin{lemma}
\label{lem_cut_Barrom}
Let $p>0$, $Q_L = (-L/2, L/2)^d \supseteq \overline{\Omega}$,
and $\chi$ satisfy \eqref{eq_cutoff_fun}.
Then $\chi\in \mathbb{B}_p(\R^d)$.
\end{lemma}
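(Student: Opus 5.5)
The plan is to show directly that $\widehat\chi$ is integrable against the weight $W_p(\xi)=1+\|\xi\|_1^p$, using the smoothness and compact support of $\chi$. Since $\chi\in C_c^\infty(Q_L)\subset L^1(\R^d)$, it is its own extension in Definition~\ref{Def:SpatialNorm}. So it suffices to prove
\[
\int_{\R^d}\bigl(1+\|\xi\|_1^p\bigr)\,|\widehat\chi(\xi)|\,\dd\xi<\infty .
\]

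\textbf{Step 1: rapid decay of $\widehat\chi$.} For any integer $q\ge0$ and any coordinate $j$, integrate by parts $q$ times in $x_j$; all boundary terms vanish by compact support. This gives
\[
(2\pi i\xi_j)^q\,\widehat\chi(\xi)=\widehat{\partial_j^q\chi}(\xi),
\]
and therefore
\[
|\xi_j|^q|\widehat\chi(\xi)|\le(2\pi)^{-q}\|\partial_j^q\chi\|_{L^1(\R^d)}<\infty .
\]
This is the same device used in the proof of Lemma~\ref{lem:uniform-head-moments}.

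\textbf{Step 2: reduce to the largest coordinate.} For $\xi\neq0$, choose $j$ with $|\xi_j|\ge\|\xi\|_1/d$. Then
\[
|\widehat\chi(\xi)|\le C_q\min\bigl\{1,\|\xi\|_1^{-q}\bigr\},\qquad C_q:=\max\Bigl\{\|\chi\|_{L^1},\;(d/2\pi)^q\max_j\|\partial_j^q\chi\|_{L^1}\Bigr\}.
\]
The bound by $1$ (up to $C_q$) comes from $|\widehat\chi|\le\|\chi\|_{L^1}$.

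\textbf{Step 3: integrate radially.} Fix an integer $q>p+d$. The $\ell^1$-sphere measure is $2^dt^{d-1}/(d-1)!\,\dd t$, so
\[
\int_{\R^d}\bigl(1+\|\xi\|_1^p\bigr)|\widehat\chi(\xi)|\,\dd\xi\le \frac{2^dC_q}{(d-1)!}\int_0^\infty (1+t^p)\,t^{d-1}\min\{1,t^{-q}\}\,\dd t .
\]
On $[0,1]$ the integrand is bounded. On $[1,\infty)$ it is at most $2t^{p+d-1-q}$, which is integrable because $q>p+d$. This gives finiteness.

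There is no real obstacle. The only point needing care is choosing $q$ strictly larger than $p+d$ so the tail integral converges. One should also note that $\chi\in C_c^\infty$ makes every $\|\partial_j^q\chi\|_{L^1}$ finite. Alternatively, one could apply Lemma~\ref{lem:uniform-head-moments}-style bounds verbatim, but the direct argument above is shorter.
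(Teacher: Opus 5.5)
Your proof is correct and follows essentially the same route as the paper: repeated integration by parts gives rapid decay of $\widehat\chi$, and a radial integration with decay order larger than $p+d$ gives finiteness. The only differences are cosmetic. The paper bounds $|\widehat\chi|$ by $C_{\chi,m}(1+\|\zeta\|_2)^{-m}$ and integrates in Euclidean polar coordinates, which yields an explicit Gamma/Beta-function constant. You instead make the decay constant explicit through $\|\partial_j^q\chi\|_{L^1}$ and integrate over $\ell^1$ shells, as in Lemma~\ref{lem:uniform-head-moments}.
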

\begin{proof}
Since $\chi\in C_c^\infty(\R^d)$, its Fourier
transform decays faster than any polynomial, as follows
from repeated integration by parts. Consequently,
\begin{equation}
\label{eq_CchiM}
C_{\chi,m}
:=
\sup_{\zeta\in\R^d}
(1+\| \zeta \|_2)^m|\widehat\chi(\zeta)| < \infty , ~~~ \forall m>0.
\end{equation}
Moreover, notice that
$
\| \zeta \|_1 \le \sqrt{d} \| \zeta \|_2,
$
and thus
\begin{equation}
\label{eq:cutoff-first-bound}
\begin{split}
\|\chi\|_{\mathbb B_{p}(\R^d)}
=
\int_{\R^d}
\left(1+\|\zeta\|_1^p\right)
|\widehat\chi(\zeta)|\,\dd \zeta \le
C_{\chi,m}
\int_{\R^d}
\left(1 + d^{p/2} \| \zeta \|_2^p\right)
(1+ \| \zeta \|_2)^{-m}\,\dd \zeta.
\end{split}
\end{equation}
Introduce
$\Gamma(s):=
\int_{0}^{\infty} t^{s-1}e^{-t}\,\dd t$, $s>0$, as the Euler gamma function.
Choosing $m>d+p$, by polar integration and the beta-integral identity, we have
\begin{equation}
\begin{aligned}
\|\chi\|_{\mathbb B_{p}(\R^d)}
\le
C_{\chi,m}
\frac{2\pi^{d/2}}
{\Gamma(d/2)\Gamma(m)}
\left[
\Gamma(d)\Gamma(m-d)
+
d^{p/2}
\Gamma(d+p)\Gamma(m-d-p)
\right].
\end{aligned}
\end{equation}
\end{proof}

\begin{lemma}
\label{lem_cutoff}
Let $q\in\mathbb{N}$ and $\delta > 0$. 
Then, there exists $\chi_{q,\delta} \in C^q(\R)$ satisfying
\begin{equation}
\label{eq_chi_R}
    0\leq \chi_{q,\delta}\leq 1,
    \qquad
    \chi_{q,\delta}\equiv 1\ \text{on }[-1,1],
    \qquad
    \operatorname{supp}(\chi_{q,\delta})\subset[- 1-\delta ,1+\delta ],
\end{equation}
and
\begin{equation}
\left\|
\chi_{q,\delta}^{(k)}
\right\|_{L^1(\R)}
\le
2\left[ \frac{2(q+1)}{\delta} \right]^{k-1},
\qquad
1\leq k\leq q.
\label{eq:cutoff-delta-L1}
\end{equation}
\end{lemma}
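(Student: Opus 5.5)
The construction mollifies an indicator function by a $(q+1)$-fold convolution of normalized box functions, a B-spline mollifier. Each derivative is then placed on a factor whose distributional derivative is a pair of Dirac masses, and Young's inequality for measures yields \eqref{eq:cutoff-delta-L1} directly.

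\textbf{Construction.} Set $h:=\delta/(q+1)$ and $\phi:=h^{-1}\mathbf 1_{[-h/2,h/2]}$, so that $\phi\ge0$ and $\int\phi=1$. Define
\[
\chi_{q,\delta}:=\mathbf 1_{I}*\phi^{*(q+1)},\qquad I:=[-1-\delta/2,\,1+\delta/2].
\]
The mollifier $\phi^{*(q+1)}$ is a nonnegative probability density supported in $[-(q+1)h/2,(q+1)h/2]=[-\delta/2,\delta/2]$. Consequently $0\le\chi_{q,\delta}\le1$. Moreover, $\chi_{q,\delta}=1$ on $[-1,1]$, because for $|t|\le1$ the shifted support $t-[-\delta/2,\delta/2]$ lies in $I$. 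Finally, $\operatorname{supp}\chi_{q,\delta}\subset I+[-\delta/2,\delta/2]=[-1-\delta,1+\delta]$. This gives \eqref{eq_chi_R}.

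\textbf{Derivatives and regularity.} In the sense of distributions, $\mathbf 1_I'=\delta_{-1-\delta/2}-\delta_{1+\delta/2}$, a measure of total variation $2$. Likewise $\phi'=h^{-1}(\delta_{-h/2}-\delta_{h/2})$, a measure of total variation $2/h$. For $1\le k\le q$, we distribute the $k$ derivatives as one on $\mathbf 1_I$ and one on each of $k-1$ of the $q+1$ copies of $\phi$. This gives
\[
\chi_{q,\delta}^{(k)}=\mathbf 1_I' * (\phi')^{*(k-1)} * \phi^{*(q+2-k)} .
\]
Since $q+2-k\ge2$, at least two factors of $\phi$ remain. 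Thus $\chi^{(k)}_{q,\delta}$ is a finite combination of translates of $\phi^{*(q+2-k)}$, which is continuous because $\phi*\phi$ is the hat function. Hence $\chi_{q,\delta}\in C^q(\R)$. The one point to check is that these distributional derivatives coincide with the classical ones. This follows because at $k=q$ the expression is already a continuous function, so integrating back yields classical $C^q$ regularity.

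\textbf{$L^1$ bounds.} Young's inequality for convolution of finite measures with an $L^1$ function gives
\[
\|\chi_{q,\delta}^{(k)}\|_{L^1}\le \|\mathbf 1_I'\|_{TV}\,\|\phi'\|_{TV}^{k-1}\,\|\phi\|_{L^1}^{q+2-k}\le 2\Bigl(\tfrac{2}{h}\Bigr)^{k-1}=2\Bigl[\tfrac{2(q+1)}{\delta}\Bigr]^{k-1},
\]
which is \eqref{eq:cutoff-delta-L1}.

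The only mildly delicate point is the choice of width $h=\delta/(q+1)$. It must be small enough that the support stays within $\delta$ of $[-1,1]$, yet it enters the bound only through $2/h$ per derivative. Placing the first derivative on the indicator rather than on a box avoids an extra factor $1/h$, and this is what produces the exponent $k-1$ rather than $k$.
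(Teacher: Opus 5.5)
Your proposal is correct and is essentially the paper's proof. The paper uses the same B-spline $b^{*(q+1)}$ built from boxes of width $\delta/(q+1)$, integrates it to a transition function $\Theta$, and sets $\chi_{q,\delta}(t)=\Theta(1+\delta-|t|)$ on $1<|t|<1+\delta$. It then bounds $\Theta^{(k)}=(b')^{*(k-1)}*b^{*(q+2-k)}$ with the same measure version of Young's inequality; there the factor $2$ comes from the two transition regions rather than from $\|\mathbf 1_I'\|_{\mathrm{TV}}=2$. In fact your $\mathbf 1_I*\phi^{*(q+1)}$ is exactly the paper's $\chi_{q,\delta}$ (your $\phi$ is the paper's box recentred), and writing it as a single convolution makes the $C^q$ matching at $|t|=1$ and $|t|=1+\delta$ automatic instead of something to check.
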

\begin{proof}
We employ the standard construction approach by integrating a scaled cardinal B-spline \cite{1999Unser}.
Set $\overline{q}=q+1$. 
We denote convolution by $*$ and define the functions

\[
b_{{q,\delta}}(t):=\frac{\overline{q}}{\delta}\mathbf{1}_{[0,\delta/\overline{q}]}(t),
~~~
\text{and} 
~~~
 \varphi_{q,\delta}(t):=  b_{q,\delta}^{*\overline{q}}:=\underbrace{b_{q,\delta}*\cdots*b_{q,\delta}}_{\overline{q}\text{ factors}}.
\]
The convolution identities imply
\[
\varphi_{q,\delta} \ge 0,
~~~~ \int_{\R} \varphi_{q,\delta} \dd t = 1,
~~~~ \text{supp}(\varphi_{q,\delta}) \subset [0,\delta].
\]
The associated transition function is then defined by $ \Theta_{q,\delta}(t):=\int_{-\infty}^{t}\varphi_{q,\delta}(s)\,ds$.
Now, we are ready to define the finite-order cutoff $\chi_{q,\delta}(t)$ by
\[
    \chi_{q,\delta}(t)=
    \begin{cases}
        1, & |t|\le 1,\\
        \Theta_{q,\delta}(1+\delta-|t|), & 1<|t|<1+\delta,\\
        0, & |t|\ge 1+\delta.
    \end{cases}
\]
This construction satisfies \eqref{eq_chi_R}.

Next, we estimate its derivatives.
We shall frequently use Young's inequality for convolution:
\begin{equation}
\label{eq_young}
\|f*g\|_{L^1(\R)}
\le \|f\|_{L^1(\R)}\|g\|_{L^1(\R)},
\qquad
\|\mu*f\|_{L^1(\R)}
\le \|\mu\|_{\mathrm{TV}}\|f\|_{L^1(\R)},
\end{equation}
where $f,g\in L^1(\R)$ and $\mu$ is a finite signed measure.
For every $1\leq k\leq q$, the derivative
$\chi_{q,\delta}^{(k)}$ vanishes on the intervals where
$\chi_{q,\delta}$ is constant.
Consequently, with a change of variable, we obtain
\begin{equation}
\label{eq_chi_theta}
\left\|
\chi_{q,\delta}^{(k)}
\right\|_{L^1(\R)}
= 
2\int_0^\delta
\left|
\Theta_{q,\delta}^{(k)}(s)
\right|\,\dd s =
2
\left\|
\Theta_{q,\delta}^{(k)}
\right\|_{L^1(\R)}
\end{equation}
where in the last equality, we have used that $\Theta_{q,\delta}$ is constant outside $[0,\delta]$. 
Now, it remains to estimate $\left\|\Theta_{q,\delta}^{(k)}\right\|_{L^1(\R)}$.
For every $2\leq k\leq q$, we write down
\begin{equation}
\label{eq_theta_derik}
    \Theta_{q,\delta}^{(k)}
    =
    (b'_{q,\delta})^{*(k-1)}*b_{q,\delta}^{*(\overline{q}+1-k)},
    ~~~
    \text{and}
    ~~~
    \Theta_{q,\delta}^{(1)} = \varphi_{q,\delta}.
\end{equation}
Repeated application of \eqref{eq_young}, together with
$\|b_{q,\delta}\|_{L^1(\R)}=1$, yields
$
\left\|b_{q,\delta}^{*l}\right\|_{L^1(\R)}
\le
\left\|b_{q,\delta}\right\|_{L^1(\R)}^{l}
=1$, for every integer $l\geq 1$.
Moreover, the distributional derivative
$b'_{q,\delta}
=\frac{\overline q}{\delta}
\bigl(\delta_0-\delta_{\delta/\overline q}\bigr)
$ is a finite signed measure satisfying
$\|b'_{q,\delta}\|_{\mathrm{TV}}
\le 2\overline q/\delta$.
Applying the measure version of \eqref{eq_young} repeatedly
to \eqref{eq_theta_derik} gives,
\[
    \bigl\|\Theta_{q,\delta}^{(k)}\bigr\|_{L^1(\R)}
    \le
    \|b'_{q,\delta}\|_{\textrm{TV}}^{k-1}
    \bigl\|b_{q,\delta}^{*(\overline{q}+1-k)}\bigr\|_{L^1(\R)}  
    \le (2\overline{q}/\delta)^{k-1}.
\]
Substituting this estimate into \eqref{eq_chi_theta} completes the proof.
\end{proof}

\section{Explicit Barron approximation by cosine networks}
\label{append_explicit_BarronApprox}

In this appendix, we prove the cosine approximation rate of
\cite[Theorem~1]{SiegelXu2022HighOrder} in the probability
$L^2$ norm used in the main text, with an explicit constant depending only
on the Barron regularity order.

General estimates for orthogonal projection onto polynomial spaces are
classical; see, for example,
\cite[Theorem~2.3]{CanutoQuarteroni1982}.  For the
exponential atoms used below, the following elementary estimate for a
total-degree polynomial space is sufficient.

\begin{lemma}[Projection of an exponential]
\label{lem:exponential-polynomial-projection}
Let $N,p\ge1$ be integers, set $U_N=[-1,1]^N$, and let $\rho_N$ be the uniform probability measure on $U_N$ defined in \eqref{eq_prob_measure}.
Define $\mathbb{P}_{p-1}:=\operatorname{span}
\{\by^{\bm\nu}:\bm\nu\in\mathbb N_0^N,\ \|\bm\nu\|_1\le p-1\}$,
and let $\mathcal{Q}_{p-1}$ be the $L^2(U_N,\rho_N)$-orthogonal projection onto
$\mathbb{P}_{p-1}$, extended complex linearly when necessary. 
Then, for every $\bm\xi\in\R^N$,
\begin{equation}
\label{eq:exponential-polynomial-projection}
\left\|(I- \mathcal{Q}_{p-1})e^{2\pi i\bm\xi\cdot(\,\cdot\,)}\right\|_{L^2(U_N,\rho_N)}
\le
\min\left\{1,\frac{(2\pi\|\bm\xi\|_1)^p}{p!}\right\}.
\end{equation}
The same estimate holds with the exponential replaced by
$\cos(2\pi\bm\xi\cdot(\,\cdot\,)+\theta)$ for any $\theta\in\R$.
\end{lemma}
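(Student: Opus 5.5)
Since $\mathcal{Q}_{p-1}$ is an orthogonal projection in $L^2(U_N,\rho_N)$, I will compare $(I-\mathcal{Q}_{p-1})$ applied to the exponential with the error of a single well-chosen element of $\mathbb{P}_{p-1}$. The bound $1$ is immediate. Orthogonal projections have norm at most one, so $\|(I-\mathcal{Q}_{p-1})v\|\le\|v\|$. Moreover, $|e^{2\pi i\bm\xi\cdot\by}|=1$ pointwise and $\rho_N$ is a probability measure, so $\|e^{2\pi i\bm\xi\cdot(\cdot)}\|_{L^2(U_N,\rho_N)}=1$. For the cosine, $|\cos|\le1$ gives the same conclusion.

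For the second bound I will use the best-approximation property
\[
\|(I-\mathcal{Q}_{p-1})v\|_{L^2(U_N,\rho_N)}\le\|v-P\|_{L^2(U_N,\rho_N)}\le\|v-P\|_{L^\infty(U_N)},\qquad P\in\mathbb{P}_{p-1}.
\]
The second inequality again uses that $\rho_N$ is a probability measure. The natural choice of $P$ is the degree-$(p-1)$ Taylor polynomial of $t\mapsto e^{it}$ in the variable $t=2\pi\bm\xi\cdot\by$:
\[
P(\by):=\sum_{k=0}^{p-1}\frac{(2\pi i\,\bm\xi\cdot\by)^k}{k!}.
\]
Each $(\bm\xi\cdot\by)^k$ is a homogeneous polynomial of total degree $k\le p-1$, so $P\in\mathbb{P}_{p-1}$, complexified as the lemma allows. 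The Taylor remainder of $e^{it}$ at order $p$ satisfies $|e^{it}-\sum_{k<p}(it)^k/k!|\le |t|^p/p!$. This follows from the integral form of the remainder, $\frac{1}{(p-1)!}\int_0^t (t-s)^{p-1} i^p e^{is}\,\dd s$, together with $|e^{is}|=1$. On $U_N$ we have $|\bm\xi\cdot\by|\le\|\bm\xi\|_1\|\by\|_\infty\le\|\bm\xi\|_1$, which gives the pointwise bound $(2\pi\|\bm\xi\|_1)^p/p!$. Taking the minimum of this with the trivial bound proves \eqref{eq:exponential-polynomial-projection}.

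For the cosine, I will use instead the real Taylor polynomial of $t\mapsto\cos(t+\theta)$ at $t=0$ up to degree $p-1$. Every derivative of this function is bounded by $1$, so the Lagrange remainder is at most $|t|^p/p!$, and the argument above applies verbatim. The only points needing care are that the projection is extended complex-linearly, so that using a complex polynomial competitor is legitimate, and that the $\ell^1$–$\ell^\infty$ Hölder step matches the $\|\bm\xi\|_1$ appearing in the statement. I see no real obstacle beyond these checks.
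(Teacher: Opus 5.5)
Your proposal is correct and follows the paper's route: the trivial bound via the zero competitor (equivalently, contractivity of the projection), the degree-$(p-1)$ Taylor polynomial of $e^{2\pi i\bm\xi\cdot\by}$ with integral remainder and $|\bm\xi\cdot\by|\le\|\bm\xi\|_1$, then best approximation together with $\rho_N(U_N)=1$. The only cosmetic difference is the cosine case: the paper takes real parts after multiplying by $e^{i\theta}$, while you use the real Taylor polynomial of $\cos(t+\theta)$ directly, and these two competitor polynomials coincide.
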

\begin{proof}
The zero polynomial gives the bound $1$.  The Taylor polynomial
$q_{\bm\xi,p}(\by):=\sum_{\ell=0}^{p-1}
(2\pi i\bm\xi\cdot\by)^\ell/\ell!$ belongs to $\mathbb{P}_{p-1}$, and its
integral remainder, together with
$|\bm\xi\cdot\by|\le\|\bm\xi\|_1$ on $U_N$, gives
\[
\|e^{2\pi i\bm\xi\cdot(\,\cdot\,)}-q_{\bm\xi,p}\|_{L^\infty(U_N)}
\le \frac{(2\pi\|\bm\xi\|_1)^p}{p!}.
\]
The best-approximation property of $ \mathcal{Q}_{p-1}$ and $\rho_N(U_N)=1$ prove
\eqref{eq:exponential-polynomial-projection}.  Taking the real part after
multiplication by $e^{i\theta}$ proves the cosine statement.
\end{proof}

\begin{lemma}[Explicit cosine approximation]
\label{lem:explicit-BarronApprox}
Let $N\ge1$ be an integer, let $\tau>0$, and set
$U_N=[-1,1]^N$. Let $\rho_N$ denote the uniform probability
measure on $U_N$. For every real-valued $g\in\mathbb B_\tau(U_N)$ and every
integer $K\ge1$, there exists a real single-hidden-layer cosine network with width at most $K$:
\[
g_K(\by)=\sum_{j=1}^K
c_j\sigma(\boldsymbol\omega_j\cdot\by+\theta_j),
\qquad \sigma(t)=\cos t,
\qquad \by\in U_N,
\]
where $c_j,\theta_j\in\R$ and
$\boldsymbol\omega_j\in\R^{N}$, such that
\begin{equation}
\label{eq:explicit-BarronApprox-error}
\|g-g_K\|_{L^2(U_N,\rho_N)}
\le C_{\tau}
\|g\|_{\mathbb B_\tau(U_N)}
K^{-\frac12-\frac{\tau}{N}},
\end{equation}
where 
\begin{equation}
\label{eq:explicit-BarronApprox-constant}
C_{\tau}
=4\left[\max\{8\pi e,2(\tau+1)\}\right]^\tau.
\end{equation}
\end{lemma}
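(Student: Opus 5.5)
We would follow the strategy of \cite[Theorem~1]{SiegelXu2022HighOrder}, working in $L^2(U_N,\rho_N)$ and tracking every constant. Fix a real-valued $g\in\mathbb B_\tau(U_N)$ and an extension $g_e$ whose weighted Fourier integral is within a factor $(1+\epsilon)$ of $\|g\|_{\mathbb B_\tau(U_N)}$. Since $g$ is real, Fourier inversion gives the cosine representation
\[
g(\by)=\int_{\R^N}|\widehat g_e(\bm\xi)|\cos\bigl(2\pi\bm\xi\cdot\by+\theta(\bm\xi)\bigr)\,\dd\bm\xi,\qquad \by\in U_N .
\]
We then rewrite this as an expectation with respect to the probability measure $\mu\propto(1+\|\bm\xi\|_1^\tau)|\widehat g_e(\bm\xi)|\,\dd\bm\xi$. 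The integrand becomes the normalized atom $\|g\|_{\mathbb B_\tau}\,(1+\|\bm\xi\|_1^\tau)^{-1}\cos(2\pi\bm\xi\cdot\by+\theta)$.

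\textbf{Step 1: polynomial splitting.} Set $p:=\lceil\tau\rceil$ and split $g=\mathcal Q_{p-1}g+(I-\mathcal Q_{p-1})g$. By Lemma~\ref{lem:exponential-polynomial-projection}, each projected atom satisfies
\[
\bigl\|(I-\mathcal Q_{p-1})\cos(2\pi\bm\xi\cdot(\cdot)+\theta)\bigr\|_{L^2(U_N,\rho_N)}\le\min\Bigl\{1,\tfrac{(2\pi\|\bm\xi\|_1)^p}{p!}\Bigr\}.
\]
After dividing by $1+\|\bm\xi\|_1^\tau$, this is bounded uniformly in $\bm\xi$ by a factor of order $(2\pi e)^\tau$. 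This uniform bound is the origin of the $[\max\{8\pi e,\dots\}]^\tau$ constant in \eqref{eq:explicit-BarronApprox-constant}.

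\textbf{Step 2: stratified sampling for the remainder.} Partition frequency space into $\lfloor K/2\rfloor$ cells of equal $\mu$-mass that are also geometrically small in the $\ell^1$ metric, and draw one frequency from each cell. This is a Makovoz-type argument. The squared variance in a cell is controlled by the Lipschitz modulus of $\bm\xi\mapsto(I-\mathcal Q_{p-1})\cos(2\pi\bm\xi\cdot\by+\theta)/(1+\|\bm\xi\|_1^\tau)$ times the cell diameter. The extra smoothness of the projected atoms, which are of order $\|\bm\xi\|_1^{p}$ near the origin, lets the cell diameter scale like $K^{-1/N}$ per unit of regularity. This yields the error $K^{-1/2-\tau/N}$ for the remainder term, realized by at most $K/2$ cosine neurons. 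A realization exists because the expected squared error obeys the bound.

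\textbf{Step 3: realize the polynomial part.} The term $\mathcal Q_{p-1}g$, together with the $-\mathcal Q_{p-1}$ corrections of the sampled atoms, must also be expressed with cosines. We would approximate monomials of degree at most $p-1$ by finite differences of $\cos(h\bm\omega\cdot\by+\theta)$ with $h\to0$. Equivalently, the span of low-frequency cosines is dense in $\mathbb P_{p-1}$ with an arbitrarily small error, so $\dim\mathbb P_{p-1}$ extra neurons suffice. In the regime where $\dim\mathbb P_{p-1}$ exceeds $K/2$, we would instead use the trivial bound $\|g\|_{L^2}\le\|g\|_{\mathbb B_\tau}$. The factor $2(\tau+1)$ in the constant absorbs $K^{1/2+\tau/N}$ in that regime, since $K\lesssim\binom{N+p-1}{N}$ there.

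\textbf{Main obstacle.} The hard step is Step~2: building the stratification so that the variance gain is exactly $K^{-2\tau/N}$, with constants independent of $N$ apart from the displayed $\max\{8\pi e,2(\tau+1)\}^\tau$. We would first try cells defined by dyadic $\ell^1$-shells combined with a uniform cube partition inside each shell, and then bound the local oscillation using the Taylor remainder estimate from Lemma~\ref{lem:exponential-polynomial-projection}, applied to the difference of two atoms.
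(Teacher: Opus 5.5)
\textbf{There is a genuine gap.} It sits in your Step~2, and it is caused by fixing the degree at $p=\lceil\tau\rceil$ in Step~1.

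With one frequency drawn per cell of $\mu$-mass $1/K'$, the expected squared error is $\sum_c\mu(c)^2$ times the within-cell variance of the atom. That variance is generically of order $(\operatorname{diam}c)^2$, not $(\operatorname{diam}c)^{2\tau}$, because the first-order Taylor term of the atom in $\bm\xi$ does not cancel. So even with frequencies in a bounded set and cells of diameter $\sim K^{-1/N}$, you get at best $K^{-1/2-1/N}$.

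The factor $\|\bm\xi\|_1^p$ of the projected atoms only helps near $\bm\xi=0$. Suppose $\widehat g_e$ is spread over the shell $1\le\|\bm\xi\|_1\le2$. Then neither the projected atoms nor their $\bm\xi$-gradients shrink as $K$ grows, and the stratified error really is of order $K^{-1/2-1/N}$. That is worse than \eqref{eq:explicit-BarronApprox-error} for every $\tau>1$, and nothing in your sketch supplies the higher-order cancellation that would be needed.

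Two smaller issues are also unaddressed. Equal-mass cells of an arbitrary $\mu$ on the unbounded $\R^N$ cannot all be geometrically small. And $\theta(\bm\xi)=\arg\widehat g_e(\bm\xi)$ is only measurable, so $\bm\xi\mapsto a_{\bm\xi}$ need not have any Lipschitz modulus.

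Your fallback in Step~3 also fails. For $\tau>1$, $\dim\mathbb P_{p-1}=\binom{N+p-1}{N}\ge N+1$ grows with $N$. The trivial bound $\|g\|_{L^2}\le\|g\|_{\mathbb B_\tau(U_N)}$ would therefore need $K^{1/2+\tau/N}\le C_\tau$ for all $K$ up to about $2(N+1)$. This is false for large $N$, because $C_\tau$ is independent of $N$. You need the Monte Carlo factor $K^{-1/2}$ in that regime as well.

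\textbf{The missing idea is to let the polynomial degree grow with the width.} The paper takes $p=\lfloor(K/2)^{1/N}\rfloor$, so that $p^N\le K/2$ and $p\ge K^{1/N}/4$.

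When $p\ge\tau$, combine Lemma~\ref{lem:exponential-polynomial-projection} with $p!\ge(p/e)^p$ and $\min\{1,t^p\}\le t^\tau$. This gives $\|(I-\mathcal Q_{p-1})a_{\bm\xi}\|_{L^2(U_N,\rho_N)}\le(2\pi e/p)^\tau\le(8\pi e)^\tau K^{-\tau/N}$ uniformly in $\bm\xi$. Plain i.i.d.\ Monte Carlo with $n=K-p^N\ge K/2$ samples then controls $(I-\mathcal Q_{p-1})(g-f_n)$ at the target rate, with no stratification. The correction $\mathcal Q_{p-1}(g-f_n)\in\mathbb P_{p-1}$ is realized to arbitrary accuracy by at most $p^N$ cosine units.

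When $p<\tau$, i.e.\ $K<2(\tau+1)^N$, Monte Carlo with $K$ samples gives $B_eK^{-1/2}$, and $K^{\tau/N}<[2(\tau+1)]^\tau$.

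This is also where the factor $8\pi e$ in $C_\tau$ comes from: $(2\pi e/p)^\tau$ with a $K$-dependent $p$. It does not come from a fixed-degree uniform bound as you suggest; with fixed $p$ the normalized projected atom is merely bounded by $1$.
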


\begin{proof}
We first represent $g$ as an expectation of cosine functions.
Direct random sampling yields a cosine network with error
$\mathcal O(K^{-1/2})$.
To improve this rate, we estimate the component of the sampling
error orthogonal to a polynomial space and approximate its
polynomial projection by a second cosine network.
Combining the two networks yields the improved rate
$\mathcal O(K^{-1/2-\tau/N})$.
The case $g=0$ is immediate, so assume $g\ne0$.
For $K=1$, take $g_1=0$ and use
$\|g\|_{L^2(U_N,\rho_N)}
\le\|g\|_{L^\infty(U_N)}
\le\|g\|_{\mathbb B_\tau(U_N)}$
and $C_\tau\ge1$.
We henceforth assume $K\ge2$.

\textbf{Step 1: Probabilistic representation.}
By Definition~\ref{Def:SpatialNorm}, choose an admissible
extension $g_e$ of $g$ such that
\begin{equation}
\label{eq:explicit-BarronApprox-extension}
B_e:=\int_{\R^N}
(1+\|\bm\xi\|_1^\tau)|\widehat g_e(\bm\xi)|\,\dd\bm\xi
\le2\|g\|_{\mathbb B_\tau(U_N)}.
\end{equation}
Denote $v_\tau(\bm\xi):=1+\|\bm\xi\|_1^\tau$, and define
\[
\varpi(\bm\xi):=B_e^{-1}v_\tau(\bm\xi)|\widehat g_e(\bm\xi)|,
\qquad
\theta(\bm\xi):=\arg\widehat g_e(\bm\xi),
\qquad
a_{\bm\xi}(\by):=
\frac{\cos(2\pi\bm\xi\cdot\by+\theta(\bm\xi))}{v_\tau(\bm\xi)},
\]
where $\arg0:=0$. Then $\varpi$ is a probability density on $\R^N$.
Since $g$ is real valued, Fourier inversion yields
\begin{equation}
\label{eq:explicit-BarronApprox-representation}
\begin{aligned}
g(\by)
=\operatorname{Re}\int_{\R^N}
\widehat g_e(\bm\xi)e^{2\pi i\bm\xi\cdot\by}\,\dd\bm\xi =B_e\int_{\R^N}a_{\bm\xi}(\by)\varpi(\bm\xi)\,\dd\bm\xi,
\qquad \by\in U_N.
\end{aligned}
\end{equation}
Then, we have
\begin{equation}
\label{eq:explicit-BarronApprox-representation_1}
g(\by) = B_e \mathbb E_{\bm\xi} \left[ a_{\bm\xi}(\by) \right].
\end{equation}

Direct random sampling yields a cosine network with error
$\mathcal O(K^{-1/2})$. To improve this rate, we introduce a
polynomial correction to the sampling error.
Let $\mathbb P_{p-1}$ and $\mathcal Q_{p-1}$ be the polynomial
space and orthogonal projection defined in
Lemma~\ref{lem:exponential-polynomial-projection}, with
\begin{equation}
\label{p_KN}
p:=\left\lfloor(K/2)^{1/N}\right\rfloor.
\end{equation}
This choice reserves $p^N\le K/2$ hidden units for the
polynomial correction, balancing the polynomial degree
and the number of samples.
In the following, we distinguish the cases $p\ge\tau$ and $p<\tau$.

\textbf{Step 2: Polynomial projection for $p\ge \tau$.}
Suppose first that $p\ge\tau$. 
Applying that lemma to the phase-shifted cosine in $a_{\bm\xi}$ gives
\begin{equation}
\label{eq:explicit-BarronApprox-projection}
\begin{aligned}
\|(I- \mathcal{Q}_{p-1})a_{\bm\xi}\|_{L^2(U_N,\rho_N)}
\le\frac{\min\{1,(2\pi e\|\bm\xi\|_1/p)^p\}}
{1+\|\bm\xi\|_1^\tau}
\le\left(\frac{2\pi e}{p}\right)^\tau,
\end{aligned}
\end{equation}
where we have used $p!\ge(p/e)^p$ and $\min\{1,t^p\}\le t^\tau$ for $t\ge0$ and $p\ge\tau$.

For an integer $n\ge1$, let $\bm\xi_1,\ldots,\bm\xi_n$ be independent random variables with density
$\varpi$. 
Set $f_n:=\frac{B_e}{n}\sum_{j=1}^n a_{\bm\xi_j}$,
and
\eqref{eq:explicit-BarronApprox-representation_1} gives $\mathbb E_{\bm\xi_1,\cdots,\bm\xi_n} f_n = g $.
Independence makes the cross terms with distinct indices vanish, 
and thus, with \(Z=(I- \mathcal{Q}_{p-1})a_{\boldsymbol\xi}\), there holds 
\begin{equation*}
\begin{aligned}
\mathbb E_{\bm\xi_1,\cdots,\bm\xi_n} \|(I- \mathcal{Q}_{p-1})(g-f_n)\|_{L^2(U_N,\rho_N)}^2
&=\frac{B_e^2}{n}
\left(\mathbb E_{\bm\xi} \| Z \|_{L^2(U_N,\rho_N)}^2
-\| \mathbb E_{\bm\xi} Z \|_{L^2(U_N,\rho_N)}^2\right)\\
&\le\frac{B_e^2}{n}\left(\frac{2\pi e}{p}\right)^{2\tau},
\end{aligned}
\end{equation*}
where the last inequality follows from
\eqref{eq:explicit-BarronApprox-projection}.
Choose samples attaining this bound and set
$\mathfrak{f}:= \mathcal{Q}_{p-1}(g-f_n)\in\mathbb{P}_{p-1}$. Then $\mathfrak{f}$ is real valued and
\begin{equation}
\label{eq:explicit-BarronApprox-polynomial-error}
\|g-f_n-\mathfrak{f}\|_{L^2(U_N,\rho_N)}
=\|(I- \mathcal{Q}_{p-1})(g-f_n)\|_{L^2(U_N,\rho_N)}
\le B_e\left(\frac{2\pi e}{p}\right)^\tau n^{-1/2}.
\end{equation}
Set $n:=K-p^N$.
Since $K\ge2$, with the choice \eqref{p_KN}, 
we have $p^N \le \frac{K}{2}$ and $p \ge \frac14 K^{1/N}$.
It follows that
\[
n=K-p^N\ge\frac{K}{2},
\qquad
n^{-1/2}\le\sqrt2\,K^{-1/2},
\qquad
\left(\frac{2\pi e}{p}\right)^\tau
\le(8\pi e)^\tau K^{-\tau/N}.
\]
Thus, $p\ge\tau$ and \eqref{eq:explicit-BarronApprox-polynomial-error} give the explicit chain
\[
\begin{aligned}
\|g-f_n-\mathfrak{f}\|_{L^2(U_N,\rho_N)}
\le B_e\left(\frac{2\pi e}{p}\right)^\tau n^{-1/2}
\le\sqrt2\,B_e(8\pi e)^\tau
K^{-\frac12-\frac\tau N}.
\end{aligned}
\]

Now, it remains to approximate $\mathfrak{f}$ by a cosine neural network.
If $p=1$, then $\mathfrak{f}$ is constant and is represented exactly by one additional cosine unit with zero frequency and zero phase. 
If $p\ge2$, apply \cite[Theorem~3]{ScarselliTsoi1998} with
degree $p-1$: it gives uniform approximation on $U_N$ using at most
\[
s_p:=(p-1)\binom{p+N-2}{N-1}
\le (p-1)p^{N-1}\le p^N - 1
\]
hidden units.  
Then, the approximant has no separate output bias and uses at most \(s_p+1\le p^N\) hidden units.
Thus, in either case, we can choose $\widetilde{\mathfrak{f}}$ such that
\[
\|\mathfrak{f}-\widetilde{\mathfrak{f}}\|_{L^2(U_N,\rho_N)}
\le(2-\sqrt2)B_e(8\pi e)^\tau K^{-\frac12-\frac\tau N}.
\]
Then, the network $g_K:=f_n+\widetilde{\mathfrak{f}}$ has at most $n+p^N=K$ hidden units.
The triangle inequality and \eqref{eq:explicit-BarronApprox-extension} give
\begin{equation}
\label{eq:explicit-BarronApprox-large-width}
\begin{aligned}
\|g-g_K\|_{L^2(U_N,\rho_N)}
\le2B_e(8\pi e)^\tau K^{-\frac12-\frac\tau N}
\le4(8\pi e)^\tau
\|g\|_{\mathbb B_\tau(U_N)}K^{-\frac12-\frac\tau N}.
\end{aligned}
\end{equation}


\textbf{Step 3: Approximation for $p< \tau$.}
We can also use the same averaging argument without $ \mathcal{Q}_{p-1}$.
Since $\|a_{\bm\xi}\|_{L^2(U_N,\rho_N)}\le1$, it gives a choice of $n$
samples for which
\begin{equation}
\label{eq:explicit-BarronApprox-sampling}
\|g-f_n\|_{L^2(U_N,\rho_N)}\le B_e n^{-1/2}.
\end{equation}
If $p<\tau$, then $(K/2)^{1/N}<p+1<\tau+1$, and hence $K^{\tau/N}<2^{\tau/N}(\tau+1)^\tau \le[2(\tau+1)]^\tau$.
Taking $n=K$ in \eqref{eq:explicit-BarronApprox-sampling} and using
\eqref{eq:explicit-BarronApprox-extension}, we obtain a network $g_K$
with at most $K$ hidden units satisfying
\[
\|g-g_K\|_{L^2(U_N,\rho_N)}
\le B_e K^{-1/2}
\le2[2(\tau+1)]^\tau
\|g\|_{\mathbb B_\tau(U_N)}K^{-\frac12-\frac\tau N}.
\]
Combining these cases proves \eqref{eq:explicit-BarronApprox-error}.
\end{proof}


\section{Input truncation errors}


The following result quantifies the error arising from truncating the PDE coefficient.

\begin{lemma}
\label{lem:input-truncation}
Consider the elliptic equation \eqref{eq_case1_darcyflow} with
$f\in L^2(\Omega)$ under Assumption~\ref{ass:ellipticity}. 
Assume that $\Omega\subset\R^d$ is a bounded $C^{1,1}$ domain, such that,
for every $n\ge1$, $\by_n\in U_n$, and $h\in L^2(\Omega)$, the weak solution
$v\in H_0^1(\Omega)$ belongs to $H^2(\Omega)$ and satisfies the
uniform estimate
\begin{equation}
\label{eq:uniform-H2-regularity}
\|v\|_{H^2(\Omega)}
\le C_{\mathrm{reg}}\|h\|_{L^2(\Omega)},
\qquad
-\operatorname{div}(a_n(\cdot;\by_n)\nabla v)=h,
\end{equation}
where $C_{\mathrm{reg}}$ is independent of $n$, $\by_n$, and $h$.
For $j\ge1$, set $\bm\xi_j:=\bk(j)/L$ and
let $u_n(\by_n)$ and $u_N(\by_N)$ be the solutions corresponding to
$a_n(\cdot;\by_n)$ and $a_N(\cdot;\by_N)$, respectively. Then there is a
constant $C_{\mathrm{truc}}>0$, depending only on $\Omega$ and
$C_{\mathrm{reg}}$, such that, for every $N\ge1$,
\begin{equation}
\label{eq:input-truncation-oscillatory}
\sup_{n\ge N}\|u_n-u_N\|_{L^2(U_n,\rho_n;L^2(\Omega))}
\le C_{\mathrm{truc}}\|f\|_{L^2(\Omega)}
\sum_{j\ge N}\frac{|b_j|}{\|\bm\xi_j\|_2},
\end{equation}
where $u_N(\by_N)$ is regarded as a function on $U_n$ appended by zeros.

Suppose, in addition, that the enumeration in \eqref{eq_basis} is
chosen by nondecreasing frequency and that
$|b_j|\le C_{\sigma}(j+1)^{-r}$,
$j\in\N_0$,
for some $C_{\sigma}>0$ and $r>0$. Then there exist
$c_{d,L}>0$ and $N_{d,L}\ge1$, depending only on $d$, $L$, such that, if $r+1/d>1$,
then for every $N\ge N_{d,L}$, 
\begin{equation}
\label{eq:input-truncation-rate}
\sup_{n\ge N}\|u_n-u_N\|_{L^2(U_n,\rho_n;L^2(\Omega))}
\le
\frac{C_{\mathrm{truc}}C_{\sigma}}{c_{d,L}(r+1/d-1)}
\|f\|_{L^2(\Omega)}
N^{1-r-1/d}.
\end{equation}
\end{lemma}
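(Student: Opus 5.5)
The plan is to prove a deterministic bound, pointwise in the parameter, and then integrate. For fixed $n\ge N$ and $\by_n\in U_n$, set $w:=u_n(\by_n)-u_N(\by_N)\in H_0^1(\Omega)$. Subtracting the two variational problems gives
\[
\int_\Omega a_n\nabla w\cdot\nabla\phi\,\dd x
=-\int_\Omega (a_n-a_N)\nabla u_N\cdot\nabla\phi\,\dd x
\qquad\forall\phi\in H_0^1(\Omega),
\]
where $a_n-a_N=\sum_{j=N}^{n-1}b_jy_j\psi_j$.

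\textbf{Duality step.} I would estimate $\|w\|_{L^2(\Omega)}$ by an Aubin--Nitsche argument. Fix $h\in L^2(\Omega)$ and let $v\in H_0^1(\Omega)$ solve $-\operatorname{div}(a_n\nabla v)=h$. The operator is symmetric, and \eqref{eq:uniform-H2-regularity} gives $\|v\|_{H^2}\le C_{\mathrm{reg}}\|h\|_{L^2}$. Testing with $\phi=v$ yields
\[
\langle w,h\rangle=-\sum_{j=N}^{n-1}b_jy_j\int_\Omega\psi_j\,\nabla u_N\cdot\nabla v\,\dd x .
\]
Applying \eqref{eq:uniform-H2-regularity} to $a_N$ (that is, to $\by_n$ with zeros appended) gives $\|u_N\|_{H^2}\le C_{\mathrm{reg}}\|f\|_{L^2}$.

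\textbf{Oscillation gain (the key step).} The crude bound $|\int\psi_j\nabla u_N\cdot\nabla v|\le\|u_N\|_{H^1}\|v\|_{H^1}$ loses the factor $1/\|\bm\xi_j\|_2$. To recover it, I would write each $\psi_j$ as a divergence: for $\psi_j=\cos(2\pi\bm\xi_j\cdot x)$, take
\[
\Psi_j:=\frac{\bm\xi_j\sin(2\pi\bm\xi_j\cdot x)}{2\pi\|\bm\xi_j\|_2^2},
\]
and analogously for the sine modes. Then $\operatorname{div}\Psi_j=\psi_j$ and $\|\Psi_j\|_{L^\infty}\le(2\pi\|\bm\xi_j\|_2)^{-1}$; here $j\ge N\ge1$ guarantees $\bm\xi_j\neq0$. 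Integrating by parts gives
\[
\int_\Omega\psi_j\nabla u_N\cdot\nabla v
=\int_{\partial\Omega}(\Psi_j\cdot\mathbf n)\,\nabla u_N\cdot\nabla v\,\dd s-\int_\Omega\Psi_j\cdot\bigl(D^2u_N\nabla v+D^2v\,\nabla u_N\bigr)\dd x .
\]
The volume term is bounded by $\|\Psi_j\|_\infty\cdot 2\|u_N\|_{H^2}\|v\|_{H^2}$. For the boundary term, the $C^{1,1}$ trace inequality $\|\nabla\phi\|_{L^2(\partial\Omega)}\le C_\Omega\|\phi\|_{H^2}$ gives the same type of bound. Hence
\[
\Bigl|\int_\Omega\psi_j\nabla u_N\cdot\nabla v\Bigr|\le \frac{C_\Omega}{\|\bm\xi_j\|_2}C_{\mathrm{reg}}^2\|f\|_{L^2}\|h\|_{L^2}.
\]
Summing over $j$, using $|y_j|\le1$, and taking the supremum over $\|h\|_{L^2}=1$ yields
\[
\|w\|_{L^2}\le C_{\mathrm{truc}}\|f\|_{L^2}\sum_{j\ge N}\frac{|b_j|}{\|\bm\xi_j\|_2}
\]
uniformly in $\by_n$ and $n$. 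Since $\rho_n$ is a probability measure, this gives \eqref{eq:input-truncation-oscillatory}. The main care is in justifying the integration by parts for $H^2$ functions and in the boundary trace. This is where I expect the real obstacle; if needed, I would argue by density of smooth functions in $H^2$.

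\textbf{Rate.} With a nondecreasing-frequency enumeration, a lattice-point count gives $\#\{\bk\in\mathbb Z^d:\|\bk\|_2\le R\}\le C_d(R+1)^d$. Hence for $j\ge N_{d,L}$ the $j$-th frequency satisfies $\|\bk(j)\|_2\ge c'_dj^{1/d}$, so $\|\bm\xi_j\|_2\ge c_{d,L}j^{1/d}$. The tail then satisfies
\[
\sum_{j\ge N}\frac{C_\sigma(j+1)^{-r}}{c_{d,L}j^{1/d}}\le\frac{C_\sigma}{c_{d,L}}\sum_{j\ge N}j^{-r-1/d}.
\]
An integral comparison bounds the last sum by $N^{1-r-1/d}/(r+1/d-1)$ after adjusting $c_{d,L}$, which proves \eqref{eq:input-truncation-rate}.
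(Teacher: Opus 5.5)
Your proposal is correct and follows the paper's proof essentially step for step. You subtract the two variational problems and use duality with the $a_n$-adjoint problem and the uniform $H^2$ bound; the paper takes $h=w_{n,N}$ and cancels one factor, which is equivalent to your supremum over $h$. You write each mode as a derivative of a bounded antiderivative scaled by $(2\pi\|\bm\xi_j\|_2)^{-1}$, and your $\operatorname{div}\Psi_j$ is exactly the paper's $\frac{1}{2\pi\|\bm\xi_j\|_2}\partial_{\mathbf e_j}\widetilde\psi_j$. You then integrate by parts, control the boundary term with the $H^2$ trace bound, and finish with a lattice-point count and the integral test.

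The one thing to tidy is the final sum. Keep the shift by using $\|\bm\xi_j\|_2\ge 2^{-1/d}c_{d,L}(j+1)^{1/d}$, so that $\sum_{j\ge N}(j+1)^{-r-1/d}\le\int_N^\infty t^{-r-1/d}\,\dd t$ gives exactly $N^{1-r-1/d}/(r+1/d-1)$. If instead you bound $\sum_{j\ge N}j^{-r-1/d}$ directly, you are left with a factor like $1+(r+1/d-1)/N$. That factor depends on $r$ and cannot be absorbed into a constant depending only on $d$ and $L$.
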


\begin{proof}
Fix $n\ge N$ and $\by_n\in U_n$, set $\by_N:=\mathcal P_N\by_n$, and write
$w_{n,N}:=u_n(\by_n)-u_N(\by_N)$. Subtracting the variational equations
\eqref{eq:elliptic-pde_trunc} at levels $n$ and $N$ gives
\begin{equation}
\label{eq:input-truncation-equation}
\int_\Omega a_n(\cdot;\by_n)\nabla w_{n,N}\cdot\nabla\varphi\,\dd x
=-\int_\Omega(a_n(\cdot;\by_n)-a_N(\cdot;\by_N))
\nabla u_N\cdot\nabla\varphi\,\dd x,
~~ \varphi\in H_0^1(\Omega).
\end{equation}
Let $z_{n,N}\in H_0^1(\Omega)$ solve the adjoint problem
\begin{equation}
\label{eq:input-truncation-adjoint}
\int_\Omega a_n(\cdot;\by_n)\nabla z_{n,N}\cdot\nabla\varphi\,\dd x
=\int_\Omega w_{n,N}\varphi\,\dd x,
\qquad \varphi\in H_0^1(\Omega).
\end{equation}
Using $z_{n,N}$ in \eqref{eq:input-truncation-equation}, $w_{n,N}$ in
\eqref{eq:input-truncation-adjoint}, and
$a_n-a_N=\sum_{j=N}^{n-1}b_jy_j\psi_j$, we obtain
\begin{equation}
\label{eq:input-truncation-duality}
\|w_{n,N}\|_{L^2(\Omega)}^2
=-\sum_{j=N}^{n-1}b_jy_j
\int_\Omega\psi_j\nabla u_N\cdot\nabla z_{n,N}\,\dd x.
\end{equation}

For each mode in \eqref{eq_basis}, there is a function
$\widetilde\psi_j$ with
$\|\widetilde\psi_j\|_{L^\infty(\Omega)}\le1$ such that
\begin{equation}
\label{eq:trigonometric-antiderivative}
\begin{gathered}
\widetilde\psi_{2\ell-1}(x)
:=\sin\left(2\pi\bm\xi_{2\ell-1}\cdot x\right),
\qquad
\widetilde\psi_{2\ell}(x)
:=-\cos\left(2\pi\bm\xi_{2\ell}\cdot x\right),
\qquad \ell\ge1,\\
\psi_j
=\frac{1}{2\pi\|\bm\xi_j\|_2}
\partial_{\mathbf e_j}\widetilde\psi_j,
\qquad
\mathbf e_j:=\frac{\bm\xi_j}{\|\bm\xi_j\|_2}.
\end{gathered}
\end{equation}
Applying integration by parts in
\eqref{eq:trigonometric-antiderivative}, with
$h_{n,N}:=\nabla u_N\cdot\nabla z_{n,N}$, yields
\begin{equation}
\label{eq:oscillatory-integration-by-parts}
\int_\Omega\psi_jh_{n,N}\,\dd x
=\frac{1}{2\pi\|\bm\xi_j\|_2}
\left(
\int_{\partial\Omega}\widetilde\psi_jh_{n,N}
\mathbf e_j\cdot\mathbf n\,\dd s
-\int_\Omega\widetilde\psi_j
\partial_{\mathbf e_j}h_{n,N}\,\dd x
\right),
\end{equation}
where $\mathbf n$ is the exterior unit normal. The trace theorem~\cite[Section~5.5]{evans2010partial},
the product rule, and the Cauchy--Schwarz inequality give
\begin{equation}
\label{eq:oscillatory-integral-bound}
\left|\int_\Omega\psi_j
\nabla u_N\cdot\nabla z_{n,N}\,\dd x\right|
\le \frac{C_\Omega}{\|\bm\xi_j\|_2}
\|u_N\|_{H^2(\Omega)}\|z_{n,N}\|_{H^2(\Omega)}.
\end{equation}
Applying \eqref{eq:uniform-H2-regularity} at levels $N$ and $n$ gives
\[
\|u_N\|_{H^2(\Omega)}
\le C_{\mathrm{reg}}\|f\|_{L^2(\Omega)},
\qquad
\|z_{n,N}\|_{H^2(\Omega)}
\le C_{\mathrm{reg}}\|w_{n,N}\|_{L^2(\Omega)}.
\]
Substituting these estimates and
\eqref{eq:oscillatory-integral-bound} into
\eqref{eq:input-truncation-duality}, using $|y_j|\le1$, and
cancelling one factor $\|w_{n,N}\|_{L^2(\Omega)}$ gives
\[
\|w_{n,N}\|_{L^2(\Omega)}
\le C_{\mathrm{truc}}\|f\|_{L^2(\Omega)}
\sum_{j=N}^{n-1}\frac{|b_j|}{\|\bm\xi_j\|_2}.
\]
This bound is uniform in $\by_n$. Integrating it with respect to the
probability measure $\rho_n$ and taking the supremum over $n\ge N$ proves
\eqref{eq:input-truncation-oscillatory}.

It remains to show \eqref{eq:input-truncation-rate}. 
Noticing
$\#\{\bk\in\mathbb Z^d:\|\bk\|_2\le R\}
\le(2R+1)^d$
we clearly have
\begin{equation}
\label{eq:ordered-frequency-growth}
\|\bm\xi_j\|_2\ge c_{d,L}(j+1)^{1/d},
\qquad j\ge N_{d,L},
\end{equation}
for suitable $c_{d,L}>0$ and $N_{d,L}\ge1$.
Combining the assumed decay of $|b_j|$ with
\eqref{eq:ordered-frequency-growth}, and using the integral test,
we obtain
\[
\sum_{j\ge N}\frac{|b_j|}{\|\bm\xi_j\|_2}
\le\frac{C_{\sigma}}{c_{d,L}}
\sum_{j\ge N}(j+1)^{-r-1/d}
\le\frac{C_{\sigma}}{c_{d,L}(r+1/d-1)}
N^{1-r-1/d}.
\]
Together with \eqref{eq:input-truncation-oscillatory}, this proves
\eqref{eq:input-truncation-rate}.
\end{proof}